\documentclass[11pt, twoside, letterpaper]{article}
\usepackage{color,amsmath,amssymb,graphicx,alltt,xy,amsthm,setspace,cancel}
\usepackage{amsmath,amssymb,graphicx,alltt,xy,amsthm,}
\usepackage{amsmath,amssymb,graphicx,alltt,amsthm}
\usepackage{upgreek}
\usepackage{natbib}
\usepackage{endnotes}
\usepackage{enumerate}
\usepackage{enumitem}
\usepackage{titlesec}
\usepackage{etoolbox}

\usepackage[colorlinks=true,breaklinks=true,bookmarks=true,urlcolor=blue,
citecolor=blue,linkcolor=blue,bookmarksopen=false,draft=false]{hyperref}

\providecommand{\keywords}[1]{\begin{small} \emph{\textit{Key words:}} #1\end{small}}

\titleformat{\section}[runin]
{\normalfont\bf}{\thesection.}{.5em}{}
\titlespacing{\section}
{\parindent}{*2}{\wordsep}

\titleformat{\subsection}[runin]
{\normalfont\bf}{\thesubsection.}{.5em}{}
\titlespacing{\subsection}
{\parindent}{*2}{\wordsep}

\titleformat{\subsubsection}[runin]
{\normalfont\bf}{\thesubsubsection}{.5em}{}
\titlespacing{\subsubsection}
{\parindent}{*2}{\wordsep}

\AtBeginEnvironment{matrix}{\setlength{\arraycolsep}{.1em}}

\setlist{nosep}
\linespread{1}

\setcitestyle{numbers,square}

\usepackage{caption}
\usepackage{bbm}
\usepackage{subfloat}

\usepackage{placeins}

\usepackage{verbatim}

\definecolor{refkey}{rgb}{1,0.5,0.5}
\definecolor{labelkey}{rgb}{0.5,1,0.5}
\definecolor{MyDarkGreen}{rgb}{0,0.45,0}
\definecolor{MyDarkBlue}{rgb}{0,0,0.75}
\definecolor{MyDarkRed}{rgb}{0.9,0,0}

\usepackage[margin=1in]{geometry}

\date{}
\input xy
\xyoption{all}
\newtheorem{theorem}{Theorem}
\newtheorem{lemma}{Lemma}

\newtheorem{definition}[theorem]{Definition}

\newtheorem{corollary}{Corollary}

\newtheorem{example}[lemma]{Example}

\newcounter{example}

\newcommand{\Remark}{\vspace{0mm} \parindent=0pt
	{\bf Remark.} \hspace{0mm} \parindent=3ex}
\newcommand*{\rom}[1]{\expandafter\@slowromancap\romannumeral #1@}

\newcommand{\Question}{\vspace{0mm} \parindent=0pt
	{\bf Question.} \hspace{0mm} \parindent=3ex}

\newcommand{\rev}{}
\newcommand{\newrev}{}

\newcommand{\E}{{{\bf E}}}

\newcommand{\Mod}[1]{\mathrm{mod}\ #1}

\newcommand{\tworow}{\genfrac{}{}{0pt}{}}

\usepackage{tikz}

\thickmuskip3.5mu plus 2mu\thinmuskip2.5mu\medmuskip=3mu plus 2mu minus 1.5mu\relax

\title{Diffusion of new products with heterogeneous consumers}
\author{Gadi Fibich and Amit Golan}
\date{\small{Department of Applied Mathematics, Tel Aviv University, \href{mailto:fibich@tau.ac.il}{fibich@tau.ac.il}, \href{mailto:amitgolan33@gmail.com}{amitgolan33@gmail.com}} \\
	\normalsize{\today}}

\makeatletter
\let\inserttitle\@title
\makeatother
\usepackage{fancyhdr}
\pagestyle{fancy}
\fancyhf{}

\fancyhead[LE,RO]{\small \thepage}
\fancyhead[LO,RE]{\scriptsize {\bf Fibich and Golan}: \inserttitle}
\fancyfoot{}

\raggedbottom

\begin{document}

\maketitle
	\begin{abstract}
		Does a new product spread faster among heterogeneous or homogeneous consumers? We analyze this question using the stochastic discrete Bass model,
		in which consumers may differ in their individual external influence rates $\{p_j \}$ 
		and in their individual internal influence rates $\{ q_j \}$.
		When the network is complete and the heterogeneity is only manifested in $\{p_j \}$ or only in $\{ q_j \}$, 
		it always slows down the diffusion, compared to the corresponding homogeneous network.
		When, however, consumers are heterogeneous in both~$\{p_j\}$ and~$\{q_{j}\}$, heterogeneity slows down the diffusion in some cases, but accelerates it in others. Moreover, 
		the dominance between the heterogeneous and homogeneous adoption levels is global in time
		in some cases, but changes with time in others. Perhaps surprisingly, global dominance between two networks is not always preserved under ``additive transformations'', 
		such as adding an identical node to both networks.  When the network is not complete, the effect of heterogeneity  depends also on its spatial distribution within the network. 
	\end{abstract}
		
\keywords{Marketing; Bass model; heterogeneity; agent-based model, stochastic models, discrete models, diffusion in networks, analysis}

	\section{Introduction.}
\label{sec:introduction}

The study of the diffusion of innovations started in the sociology literature~\cite{de1903laws}
and expanded over the years~\cite{Rogers-03}. 
More generally, 
diffusion in social networks has attracted the attention of researchers  
in physics, mathematics, biology, computer science, social sciences, economics, and management science,
as it concerns the spreading of ``items'' ranging from diseases and computer viruses to rumors, information, opinions, technologies, and 
innovations~\cite{Albert-00,Anderson-92,Jackson-08,Pastor-Satorras-01,Strang-98}. 
In marketing, diffusion of new products plays a key role, with applications in retail service, industrial technology, agriculture, and in educational, pharmaceutical, and consumer-durables markets \cite{Mahajan-93}.

The first quantitative model of the diffusion of new products was proposed in 1969 by 
Bass~\cite{Bass-69}. 
In this model, we consider a population of size $M$, and denote by $n = n(t)$ the number of individuals who adopted the product by time $t$. Each of the 
$(M-n)$ nonadopters may adopt the product due to  {\em external influences} by mass media
at a constant rate of~$p$, and due to {\em internal influences} by individuals who already adopted the product, at the rate of~$\frac{q}{M}n$. Thus, the rate of internal influences increases linearly with the number of adopters. 
The individual adoption rate of each nonadopter is the sum of her external and internal adoption rates, i.e.  
\begin{equation}
	\label{eq:Bass_intro-rate}
	p+\frac{q}{M}n(t).
\end{equation}
Therefore, the rate of change of the number of adopters is 
\begin{equation}
	\label{eq:Bass_intro}
	n'(t)= \left(M-n(t)\right)\left(p+\frac{q}{M}n(t)\right).
\end{equation}

The Bass model~\eqref{eq:Bass_intro} inspired a huge amount of follow-up research; in~2004 it was chosen as one of the most cited papers in the fifty-year history of \textit{Management Science}~\cite{Hopp-04}.
From a modeling perspective, it is a {\em compartmental model}. Thus, the population is divided into two {\em compartments} (groups), adopters and nonadopters, and eq.~\eqref{eq:Bass_intro} provides the rate at which individuals move between these two compartments.
Most of the extensions of the Bass model have also been compartmental models;  
given by a deterministic ODE or ODEs.  As a result, they are relatively easy to analyze.  
Compartmental models, however, make two implicit assumptions, whose validity is highly  questionable: 
\begin{enumerate}
	\item[{\bf (A1)}] All individuals within the population are equally-likely to influence each other. In other words, the underlying social network is a {\em complete graph}. 
	\item[{\bf (A2)}] All individuals within the population are {\em homogeneous}, i.e., they all have the same $p$ and $q$.
\end{enumerate} 

To check the consequences of these assumptions, one needs to go back to the more fundamental  discrete model for the stochastic adoption of each individual in the population~\cite{Rand-11}. 
For example, the discrete analogue of the compartmental model~\eqref{eq:Bass_intro} is, cf.~\eqref{eq:Bass_intro-rate},
\begin{equation}
	\label{eq:Bass-model-heterogeneous}
	\text{Prob}\left(\tworow{j~\text{adopts in}}{(t,t+\Delta t)} \, \bigg| \,
	\tworow{j~\text{did not adopt}}{\text{by time t}}\right) = \left( p_j +  \frac{q_j}{M_j}N_j(t) \right)\Delta t,
\end{equation}
where $p_j$ and $q_j$ are the rates of external and internal influences on~$j$, $M_j$ is the number of peers (the {\em degree}) of $j$, and  $N_j(t)$ is the number of adopters at time $t$ among her $M_j$~peers.

Discrete stochastic Bass models are considerably harder to analyze than compartmental models. They enable us, however, to relax the assumptions of a complete network and of homogeneity.
{\em Most of the analysis of the discrete Bass model so far has been concerned with the role of the network structure.}
Niu~\cite{Niu-02} showed that as $M\to\infty$, the discrete Bass model on a {\em homogeneous  complete network} approaches the compartmental Bass model~\eqref{eq:Bass_intro}. 
Fibich and Gibori~\cite{OR-10} analyzed the discrete Bass model on {\em Cartesian networks}. Fibich, Levin, and Yakir~\cite{Bass-boundary-18} analyzed the effect of {\em boundary conditions} in Cartesian networks. Fibich~\cite{Bass-SIR-model-16, Bass-SIR-analysis-17} analyzed the discrete {\em Bass-SIR model},
in which adopters eventually recover and no longer influence others to adopt, on various networks. Fibich and Levin~\cite{FIBICH2020123055} analyzed the
{\em percolation} of new products on various networks, from which a
fraction of the nodes is randomly removed.

All of the above studies analyzed the discrete Bass model on {\it homogeneous} networks, i.e., 
when all individuals have the same $p$ and $q$. 
Goldenberg et al.~\cite{GLM-01} studied numerically the discrete Bass model on complete networks
with heterogeneous consumers, and observed that heterogeneity  has a small effect on the aggregate diffusion. 
To the best of our knowledge, {\em analysis} of the effect of heterogeneity in the  discrete Bass model 
was only done in~\cite{PNAS-12}. In that study, Fibich, Gavious, and Solan used the {\em averaging principle} to estimate the quantitative difference between heterogeneous and homogeneous networks. Specifically, they showed that if the network is translation-invariant
and the heterogeneity is mild,  
the difference between the aggregate diffusion in the heterogeneous and  
the corresponding homogeneous networks scales as $\epsilon^2$, where $\epsilon$ is  
the level of heterogeneity of~$\{p_j \}$ and~$\{q_j \}$.

Several studies used compartmental models to study 
the effect of heterogeneity. 
Bulte and Joshi divided the population into two groups: The influentials with $p = p_1$ and $q = q_1$, and the imitators with $p=0$ and $q = q_2$. Their numerical results revealed that heterogeneity in $p$ and $q$ can change the qualitative behavior of the diffusion~\cite{Bulte2007NewPD}. Chaterjee and Eliashberg constructed a compartmental diffusion model which allowed for  heterogeneity in consumers' initial perceptions and price hurdles. 
While their study did not directly analyze heterogeneity in $p$ and $q$ within the framework of the discrete Bass model,
it also showed that heterogeneity can alter the qualitative behavior of aggregate adoption~\cite{Chatterjee-Eliashberg-90}.

{\em  This paper provides the first-ever analysis of the qualitative effect of heterogeneity in the stochastic discrete Bass model}.  We show that heterogeneity in~$p$ and~$q$ can speed up or slow down the diffusion, compared with the corresponding homogeneous network. This result is surprising, since  heterogeneity only in~$p$ or only in~$q$ always slows down the diffusion. 
In some cases, the dominance between the heterogeneous and homogeneous networks is global in time; in others the dominance flips after some time. When the network is not complete, the effect of heterogeneity also depends on the way in which it is spatially distributed in the network.

{\em The methodological contribution} of this paper consists of several novel analytical tools: The {\em master equations} for heterogeneous networks, {\em explicit expressions for the first 3 derivatives} of the expected adoption on heterogeneous networks at $t=0$,  and a {\em CDF dominance condition} for comparing the diffusion on two networks. 
We also use the {\em dominance principle} for heterogeneous networks, which was introduced in~\cite{Bass-boundary-18}.

From a more general perspective, the vast majority of models in marketing and in economics
assume that all individuals are homogeneous. This assumption is made not because it is believed to hold, but simply because heterogeneous models are typically an order of magnitude harder to analyze than their homogeneous counterparts. This paper thus adds to the relatively thin literature on heterogeneous models in marketing and in economics.

The paper is organized as follows. In Section~\ref{sec:discrete_Bass}, we introduce the heterogeneous discrete Bass model. 
We then review some results for homogeneous complete networks 
(Section~\ref{subsec:homogeneous}), for one-sided and two-sided homogeneous circlar networks (Section~\ref{subsec:homogeneous_circles}), and the {\em dominance principle} for heterogeneous networks (Section~\ref{sec:dominance}).  
In Section~\ref{sec:general_results}, we introduce several novel analytic tools for the heterogeneous discrete Bass model. Thus, in Section~\ref{subsec:master_equations} we derive the {\em  master equations for the heterogeneous Bass model}. This linear system of ODEs can be solved analytically to yield an explicit expression for the expected fraction of adopters in any heterogeneous network;  we provide explicit expressions for $M=2$ and $M=3$. 
In Section~\ref{subsec:initial_dynamics} we derive {\em explicit expressions for the first three derivatives} at $t=0$ of the adoption in heterogeneous networks. These expressions  
allow us to analyze  the {\em initial diffusion dynamics} on heterogeneous and homogeneous networks.
In Section~\ref{subsec:cdf_dominance} we introduce the {\em CDF dominance condition}. This condition  allows us to compare the adoption levels of two different networks by comparing the CDFs of the times of the $m$th adoptions in both networks for $m=1, \dots, M$. We use this tool throughout the paper to compare the diffusion in heterogeneous and homogeneous networks.

In Section~\ref{sec:complete}, we compare heterogeneous complete networks with their homogeneous counterparts that have the same number of nodes, the same average~$p$,  and the same average~$q$. When the heterogeneity is only in~$p$, it always slows down the diffusion (Section~\ref{subsec:complete_p}). This is also the case for networks that are heterogeneous only in~$q$  (Section~\ref{subsec:complete_q}), provided that the heterogeneity in~$q$ is {\em mild}, i.e., that a nonadopter is equally influenced by all other adopters, see~\eqref{eq:Bass-model-heterogeneous}.
In Section~\ref{subsec:complete_pq} we consider networks that are heterogeneous in both~$p$ and~$q$. When the heterogeneities in~$p$ and~$q$ are positively correlated, heterogeneity slows down the diffusion. 
When the heterogeneities in~$p$ and~$q$ are not positively correlated, however, 
the diffusion in the heterogeneous case can be slower than, faster than, or equal to that in the homogeneous case.

Consider two networks for which the adoption in the first is lower than in 
the second for all times.  Will this global-in-time dominance be preserved if we increase the $p_j$ values of all the nodes in both networks by the same amount, or if we add an identical node to both networks? In Section~\ref{subsec:abc_lemma} we show that this is indeed the case when there is a node-wise and edge-wise dominance between the two networks, but 
not necessarily in other cases. {\rev We also show that the dominance between heterogeneous and homogeneous networks is not necessarily global in time, but rather can flip with time.} In Section~\ref{sec:1D} we consider heterogeneous periodic 1D networks (circles). In Sections~\ref{subsec:one_side_circle} and~\ref{subsec:two_sided_circle} we  derive the master equations for heterogeneous one-sided and two-sided circle, respectively.
We  explicitly solve these equations for any $M$ in the one-sided case, and for 
$M=2$ and $M=3$ in the two-sided case. 
We then show that the adoption in a heterogeneous two-sided circle can be higher or lower than that in the corresponding heterogeneous one-sided circle (Section~\ref{subsec:circle_compare}). This is different from the homogeneous case, where diffusion on a two-sided circle is identical
to that on the corresponding one-sided circle~\cite{OR-10}.

The analytic tools developed in this study can also be applied to homogeneous networks.
Indeed,  in~\cite{OR-10}, Fibich and Gibori conjectured that diffusion on infinite homogeneous Cartesian networks becomes faster as the dimension of the network increases. In Section~\ref{sec:cart}, we prove this conjecture for small times.
When the network is not complete, 
the effect of heterogeneity depends also on the way in which distributed among the nodes.
To illustrate this, in Section~\ref{sec:effect_of_order} we consider two heterogeneous one-sided circles that have the same nodes, but differ in the way in which the nodes are distributed in the circle.
We explicitly compute the aggregate adoption for both networks as $M \to \infty$. 
We obtain different expressions, which show that the adoption 
indeed depends also on the spatial distribution of the heterogeneity. {\rev Finally, in Section~\ref{sec:level} we show that as we vary the level of heterogeneity, its effect varies continuously and monotonically (at least for weak heterogeneity). In addition, the effect of the variance of the parameters is much smaller than that of their mean.}
\subsection{Emerging picture.}
\label{subsec:results}

This paper contains numerous results. In order to see the wood for the trees, it is useful to summarize some unifying themes:
\begin{enumerate}
	\item
	When the network is heterogeneous only in $p_j$ or only in $q_j$,  heterogeneity always slows the diffusion for all times.
	
	\item When the heterogeneity is both in $p_j$ and $q_j$, the qualitative effect of heterogeneity  is more complex:
	
	\begin{enumerate}
		\item  When the heterogeneities in  $p$ and $q$ are positively correlated,  heterogeneity always slows the diffusion for all times.
		\item
		When, however,  the heterogeneities in $p$ and $q$ are not positively correlated, heterogeneity can accelerate or slow down the diffusion. Moreover, the dominance between the heterogeneous and homogeneous networks is global in time for some cases, but changes with time for others.
	\end{enumerate}
	
	\item Global dominance between two networks is not necessarily preserved under ``additive transformations'', such as 
	increasing all the $\{p_j \}$ of both networks by the same amount, or adding an identical node to both networks. 
	
	\item
	When a network is heterogeneous and not complete, the effect of heterogeneity on the aggregate diffusion depends also on the spatial distribution of the heterogeneity among the nodes.
\end{enumerate}

\section{The heterogeneous discrete Bass model.}
\label{sec:discrete_Bass}

We begin by introducing the diffusion model which is analyzed in this study. A new product is introduced at time $t=0$ to a network with $M$ potential consumers. We denote by $X_j(t)$ the state of consumer $j$ at time $t$, so that 
\begin{equation*}
	X_j(t)=\begin{cases}
		1, \qquad {\rm if\ consumer}\ j\ {\rm adopts\ the\ product\ by\ time}\ t,\\
		0, \qquad {\rm otherwise.}
	\end{cases}
\end{equation*} 
Since all consumers are nonadopters at $t=0$,
\begin{equation}
	\label{eq:general_initial}
	X_j(0)=0, \qquad j=1,\dots,M.
\end{equation}
Once a consumer adopts the product, it remains an adopter for all time. The underlying social network is represented by a weighted directed graph, where the weight of the edge from node $i$ to node $j$ is $q_{i,j}\geq 0$, and $q_{i,j}=0$ if there is no edge from $i$ to $j$. Thus, if $i$ already adopted the product and $q_{i,j}>0$, her rate of internal influence on consumer $j$ to adopt is $q_{i,j}$. 
In addition, consumer $j$ experiences an external influence to adopt, at the rate of $p_j$. Hence, as $dt \to 0$,
\begin{equation}
	\label{eq:general_model}
	{\rm Prob}(X_j(t+dt)=1)=\begin{cases}
		\hfill 1,\hfill &\qquad {\rm if}\ X_j(t)=1,\\
		\left(p_j+\sum\limits_{\substack{k=1 \\k\neq j}}^Mq_{k,j}X_{k}(t)\right)dt, &\qquad {\rm if}\ X_j(t)=0.
	\end{cases}
\end{equation}
If $j$ is a non-adopter, the maximal internal influence that can be exerted on $j$, which occurs when all her peers are adopters, is denoted by
\begin{equation}
	\label{eq:q_on_node}
	q_j:=\sum_{\substack{k=1\\k\neq j}}^{M}q_{k,j}.
\end{equation}
We assume that any individual can be influenced by at least one other individual, i.e., that
\begin{equation}
	\label{eq:q_j>0}
	q_j>0, \qquad j=1, \dots, M.
\end{equation}
We also denote by
\begin{equation}
	\label{eq:q_out}
	q^k:=\sum_{\substack{j=1\\j\neq k}}^{M}q_{k,j}
\end{equation}
the sum of the internal influences that $k$ exerts on her peers.
\par
{\rev We mostly consider a {\em milder form of heterogeneity} in~$q$, where $\{q_j\}_{j=1}^M$ can be heterogeneous but each individual is equally influenced by any of his peers, i.e.,
	\begin{equation}
		\label{eq:q_mild_het}
		q_{i,j}=\begin{cases}
			\frac{q_j}{{\rm degree}(j)},& {\rm if }\ i\ {\rm influences }\ j,\\
			0,& {\rm otherwise.} 
		\end{cases}
	\end{equation}
	Therefore, the network structure is preserved under mild heterogeneity.}
For example, in the case of a mildly-heterogeneous complete network, \eqref{eq:general_model}~reads
\begin{equation*}
	{\rm Prob}(X_j(t+dt)=1)=\begin{cases}
		\hfill 1, \hfill &\qquad {\rm if}\ X_j(t)=1,\\
		\left(p_j+\frac{q_j}{M-1}N(t)\right)dt, &\qquad {\rm if}\ X_j(t)=0,
	\end{cases}
\end{equation*}
where $N(t):=\sum_{j=1}^{M}X_j(t)$ is the number of adopters at time $t$.

Our main goal is to compute the effect of the heterogeneity in $\{p_j\}$ and $\{q_{i,j}\}$ (or $\{q_j\}$) on the expected number of adopters
\begin{equation}
	\label{eq:n(t)_general}
	n(t):=\E \left[\sum_{j=1}^{M}X_j(t)\right]=\E\left[N(t)\right],
\end{equation}
or equivalently on the expected fraction of adopters 
\begin{equation}
	\label{eq:number_to_fraction}
	f(t):=\frac{1}{M}n(t).
\end{equation}
\subsection{Homogeneous complete networks.}
\label{subsec:homogeneous}

When the network is complete and homogeneous, then 
\begin{equation}
	\label{eq:hom_conditions}
	p_j\equiv p,\qquad q_{i,j}=\frac{q}{M-1},\qquad i,j=1,\dots,M,\qquad i\neq j.
\end{equation} 
In that case, \eqref{eq:general_model}~reads
\begin{equation}
	\label{eq:general_homogeneous}
	{\rm Prob}(X_j(t+dt)=1)=\begin{cases}
		\hfill 1, \hfill & \qquad {\rm if}\ X_j(t)=1,\\
		\left(p+\frac{q}{M-1}N(t)\right)dt, & \qquad {\rm if}\ X_j(t)=0.
	\end{cases}
\end{equation}
Niu~\cite{Niu-02} proved that as $M\to\infty$, the expected fraction of adopters in~\eqref{eq:general_homogeneous} approaches the solution of the compartmental Bass model~\cite{Bass-69}
\begin{equation}
	\label{eq:homogeneous_Bass_frac}
	f'(t)= \left(1-f(t)\right)\left(p+qf(t)\right), \qquad f(0)=0.
\end{equation}
This equation can be solved explicitly, yielding the Bass formula~\cite{Bass-69}
\begin{equation}
	\label{eq:Bass_sol}
	f_{\rm Bass}(t;p,q)=\frac{1-e^{-(p+q)t}}{1+\frac{q}{p}e^{-(p+q)t}}.
\end{equation}

\subsection{Homogeneous circles.}
\label{subsec:homogeneous_circles}

Let us denote by $f^{\rm 1-sided}_{\rm circle}(t;p,q,M)$ the expected fraction of adopters in a homogeneous one-sided circle with $M$ nodes where each individual is only influenced by his left neighbor (see Figure~\ref{fig:structure}A), i.e., $$p_j\equiv p,\qquad q_{i,j}=\begin{cases}q, & \qquad {\rm if }\ (j-i)\Mod M=1,\\ 0, &\qquad {\rm if}\ (j-i)\Mod M\neq 1,\end{cases}\qquad j,i=1,\dots,M.$$ Similarly, denote by $f^{\rm 2-sided}_{\rm circle}(t;p,q^R,q^L,M)$ the expected fraction of adopters in a homogeneous two-sided circle with $M$ nodes where each individual can be influenced by his left and right neighbors (see Figure~\ref{fig:structure}B), i.e., $$p_j\equiv p,\qquad q_{i,j}=\begin{cases}
	q^L& \qquad {\rm if}\ (j-i)\Mod M=1,\\ q^R& \qquad  {\rm if}\ (i-j)\Mod M=1,\\  0&\qquad  {\rm if}\  |j-i|\Mod M\neq 1,
\end{cases}\qquad j,i=1,\dots,M.$$ 
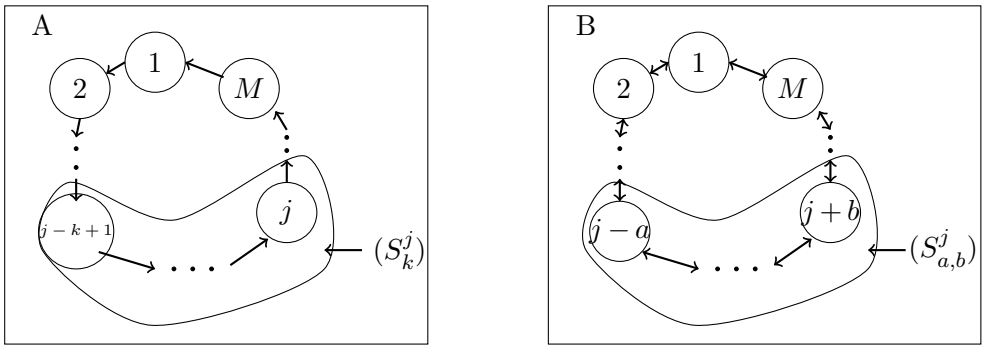
\begin{figure} [h]
	\centering
	\begin{minipage}{.43\textwidth}
		\begin{tikzpicture}[scale=0.5]
			\draw (-3.5,2) circle (0.8cm) node {$1$};
			\draw[->,thick] (-4.3,2)--(-4.8,1.7);
			\draw (-5.5,1.3) circle (0.8cm) node {$2$};
			\draw[->,thick] (-5.5,0.5)--(-5.6,0);
			\draw[black,fill=black] (-5.6,-0.3) circle (.3ex);
			\draw[black,fill=black] (-5.6,-0.8) circle (.3ex);
			\draw[->,thick] (-5.6,-1.1)--(-5.6,-1.7);
			\draw (-5.6,-2.5) circle (1cm) node {\tiny $j-k+1$};
			
			\draw [black] plot [smooth cycle] coordinates {(-5.8,-1.2) (-3,-2.2) (.5,-.5) (1,-3.5)(-3.6,-5) (-6.5,-3)};
			\draw[->,thick] (-5,-3.05)--(-3.5,-3.5);
			\draw[->,thick] (-1.5,-3.4)--(-0.5,-2.7);
			\draw[->,thick] (0,-1.2)--(0,-0.6);
			\draw[black,fill=black] (0,-0.35) circle (.3ex);
			\draw[black,fill=black] (0,0) circle (.3ex);
			\draw[->,thick] (0,0.2)--(-0.3,0.7);
			\draw[->,thick] (-1.68,1.6)--(-2.7,2);
			
			\draw[black,fill=black] (-3,-3.5) circle (.3ex);
			\draw[black,fill=black] (-2,-3.5) circle (.3ex);
			\draw[black,fill=black] (-2.5,-3.5) circle (.3ex);
			
			\draw (-1,1.3) circle (0.8cm) node {$M$};
			\draw (0,-2) circle (0.8cm) node {$j$};
			
			\draw [->, thick] (2,-3)--(1,-3);
			\draw (3,-3) node{$(S_{k}^j)$};
			
			\draw (-6.5,3) node {A};
			
			\draw (-7.5,3.5) rectangle (3.75,-5.5);
			
		\end{tikzpicture}
	\end{minipage}
	\begin{minipage}{.43\textwidth}
		\begin{tikzpicture}[scale=0.5]
			\draw (-3.5,2) circle (0.8cm) node {$1$};
			\draw[<->,thick] (-4.3,2)--(-4.8,1.7);
			\draw (-5.5,1.3) circle (0.8cm) node {$2$};
			\draw[<->,thick] (-5.5,0.5)--(-5.6,0);
			\draw[black,fill=black] (-5.6,-0.3) circle (.3ex);
			\draw[black,fill=black] (-5.6,-0.8) circle (.3ex);
			\draw[<->,thick] (-5.6,-1.1)--(-5.6,-1.7);
			\draw (-5.6,-2.5) circle (0.8cm) node {$j-a$};
			
			\draw [black] plot [smooth cycle] coordinates {(-5.8,-1.2) (-3,-2.2) (.5,-.5) (1,-3.5)(-3.6,-5) (-6.5,-3)};
			\draw[<->,thick] (-5,-3.05)--(-3.5,-3.5);
			\draw[<->,thick] (-1.5,-3.4)--(-0.5,-2.7);
			\draw[<->,thick] (0,-1.2)--(0,-0.6);
			\draw[black,fill=black] (0,-0.35) circle (.3ex);
			\draw[black,fill=black] (0,0) circle (.3ex);
			\draw[<->,thick] (0,0.2)--(-0.3,0.7);
			\draw[<->,thick] (-1.68,1.6)--(-2.7,2);
			
			\draw[black,fill=black] (-3,-3.5) circle (.3ex);
			\draw[black,fill=black] (-2,-3.5) circle (.3ex);
			\draw[black,fill=black] (-2.5,-3.5) circle (.3ex);
			
			\draw (-1,1.3) circle (0.8cm) node {$M$};
			\draw (0,-2) circle (0.8cm) node {$j+b$};
			
			\draw [->, thick] (2,-3)--(1,-3);
			\draw (3,-3) node{$(S_{a,b}^j)$};
			
			\draw (-6.5,3) node {B};
			
			\draw (-7.5,3.5) rectangle (4,-5.5);
			
		\end{tikzpicture}
	\end{minipage}
	\caption{A)~A one-sided circle with the nonadopters chain $(S_{k}^j)$. B)~A two-sided circle with the nonadopters chain $(S_{a,b}^j)$.}
	\label{fig:structure}
\end{figure}

In \cite{OR-10}, Fibich and Gibori proved that
\begin{subequations}
	\label{eqs:compare_Fibich_Gibori}
	the diffusion on one-sided and two-sided homogeneous circles are identical 
	\begin{equation}
		\label{eq:compare_Fibich_Gibori}
		f_{\rm circle}^{{\rm 1-sided}}(t; p,q,M)\equiv f_{\rm circle}^{\rm 2-sided}(t;p,q^R,q^L,M), \qquad 0\leq t< \infty,
	\end{equation}
\end{subequations}
provided that the maximal internal influence experienced by each node is identical in both cases, i.e.,that 
\begin{equation}
	\label{eq:q_Lq_R}
	q=q^L+q^R.
\end{equation}
In addition, they explicitly computed the diffusion on infinite homogeneous circles:
\begin{equation}
	\label{eq:f_1D}
	\lim\limits_{M\to \infty}f_{\rm circle}^{\rm 1-sided}(t;p,q,M)=f_{\rm 1D}(t;p,q):=1-e^{-(p+q)t+q\frac{1-e^{-pt}}{p}}.
\end{equation}

\subsection{Dominance principle.}
\label{sec:dominance}

A useful tool for comparing the diffusion in two networks is the dominance principle.
Let us begin with the following definition:

\begin{definition}[node-wise and edge-wise dominance]
	\label{def:dominance}
	Consider the heterogeneous discrete Bass model~\eqref{eq:general_model} on networks A and B with $M$~nodes, with external parameters $\{p_i^A\}$ and \{$p_i^B\}$, and internal parameters  $\{q_{i,j}^{A}\}$ and $\{q_{i,j}^{B}\}$, respectively. 
	We say that $A \preceq B$ if 
	$$
	p_j^{A} \leq p_j^B \quad \mbox{for all~j} \qquad \mbox{and} \qquad  q_{i,j}^{A} \leq q_{i,j}^{B} \quad \mbox{for all~} i \not=j.
	$$
	We say that $A \prec B$ if at least one of these $M^2$ inequalities is strict.
\end{definition}

\begin{lemma}[Dominance principle~\cite{Bass-boundary-18}]
	\label{lem:dominance-principle}
	If $A \preceq B$ then $f_{A}(t) \leq f_{B}(t)$ for $t>0$. If $A \prec B$, then $f_{A}(t) < f_{B}(t)$ for $t>0$.
\end{lemma}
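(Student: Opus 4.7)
The plan is to prove the lemma by a pathwise coupling argument. I would construct a joint process $\bigl(X_j^A(t),X_j^B(t)\bigr)_{j=1}^M$ on a single probability space such that (i)~the marginal law of $\bigl(X_j^A(t)\bigr)$ (resp.\ $\bigl(X_j^B(t)\bigr)$) is the heterogeneous Bass process~\eqref{eq:general_model} on network~$A$ (resp.\ $B$), and (ii)~$X_j^A(t)\le X_j^B(t)$ for every node~$j$ and every $t\ge 0$, almost surely. Summing over~$j$ and taking expectations then yields $n_A(t)\le n_B(t)$, hence $f_A(t)\le f_B(t)$ by~\eqref{eq:number_to_fraction}.

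For the coupling, the driving randomness consists of independent Poisson processes $\mathcal{P}_j^{\rm ext}$ of rate $p_j^B$ for each node~$j$, and $\mathcal{P}_{i,j}^{\rm int}$ of rate $q_{i,j}^B$ for each ordered pair $i\neq j$. To every arrival of $\mathcal{P}_j^{\rm ext}$ I would attach an independent Bernoulli flag equal to~$1$ with probability $p_j^A/p_j^B$ (taken to be~$0$ when $p_j^B=0$, which by $A\preceq B$ forces $p_j^A=0$), and similarly attach to every arrival of $\mathcal{P}_{i,j}^{\rm int}$ a Bernoulli flag with success probability $q_{i,j}^A/q_{i,j}^B$. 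By Poisson thinning, the flagged arrivals form independent Poisson processes at the $A$-rates. The $B$-dynamics adopts~$j$ at any arrival of $\mathcal{P}_j^{\rm ext}$ while $X_j^B=0$, and at any arrival of $\mathcal{P}_{i,j}^{\rm int}$ while $X_i^B=1$, $X_j^B=0$; the $A$-dynamics follows the same rule but using only \emph{flagged} arrivals and requiring $X_i^A=1$ for internal events. Each marginal then has the intensity prescribed by~\eqref{eq:general_model}, giving~(i).

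Property~(ii) follows by induction on the (a.s.\ finite) sequence of event times in any bounded interval. The only way a coordinate $X_j^A$ can flip from~$0$ to~$1$ is via either a flagged external ring at~$j$ or a flagged internal ring along $(i,j)$ with $X_i^A=1$. In the first case the same ring also adopts~$j$ in~$B$ (unless $X_j^B$ was already~$1$); in the second, $X_i^B=1$ by the inductive hypothesis, so the ring adopts~$j$ in $B$ as well. Adoptions being irreversible, the inequality $X_j^A(t)\le X_j^B(t)$ is preserved. For the strict statement under $A\prec B$: if, say, $p_{j^\ast}^A<p_{j^\ast}^B$, then for any $t>0$ the event that $\mathcal{P}_{j^\ast}^{\rm ext}$ has exactly one arrival in $(0,t)$, that arrival is unflagged, and no other Poisson arrival occurs anywhere in $(0,t)$, has positive probability; on this event $X_{j^\ast}^B(t)=1>0=X_{j^\ast}^A(t)$ while all other coordinates agree. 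Combined with the a.s.\ monotonicity from~(ii) this gives $\E[N_B(t)-N_A(t)]>0$, i.e., $f_A(t)<f_B(t)$. A strict edge inequality $q_{i^\ast,j^\ast}^A<q_{i^\ast,j^\ast}^B$ is handled analogously by further conditioning on a single external ring at~$i^\ast$ followed by an unflagged internal ring along $(i^\ast,j^\ast)$.

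The main obstacle is purely the bookkeeping: one must check that the Bernoulli thinning is well defined when some $B$-rate vanishes (the condition $A\preceq B$ exactly ensures this is compatible) and that the induction step correctly uses the hypothesis $X_i^A\le X_i^B$ at internal rings so that every $A$-firing is covered by a $B$-firing. Both points are direct consequences of the node-wise and edge-wise inequalities in Definition~\ref{def:dominance}, so the argument goes through without additional work.
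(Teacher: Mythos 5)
Your coupling is correct in substance, but note first that there is nothing in this paper to compare it against: Lemma~\ref{lem:dominance-principle} is imported from~\cite{Bass-boundary-18} with a citation and no in-text proof. On its own merits, your Poisson graphical construction with Bernoulli thinning is the standard proof of monotonicity for attractive, irreversible dynamics of this type, and the non-strict half is airtight: thinning gives the correct $A$-marginals (the flag probabilities are well defined precisely because $A \preceq B$ rules out $p_j^A>0=p_j^B$), arrivals are a.s.\ non-coincident and finite on bounded intervals, and the induction over event times uses exactly the right fact --- every flagged ring is also a ring for $B$, and the inductive hypothesis $X_i^A \le X_i^B$ converts the precondition for an internal $A$-firing into the precondition for the corresponding $B$-firing. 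Summing and taking expectations then gives $f_A \le f_B$, and your treatment of a strict node inequality $p_{j^\ast}^A < p_{j^\ast}^B$ (one unflagged external ring, nothing else fires) correctly upgrades pathwise dominance to a strict inequality of expectations.

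There is, however, a genuine gap in your handling of a strict \emph{edge} inequality $q_{i^\ast,j^\ast}^A < q_{i^\ast,j^\ast}^B$. Your event conditions on ``a single external ring at $i^\ast$,'' which has positive probability only if $p_{i^\ast}^A>0$ (the ring must be flagged so that $i^\ast$ adopts in \emph{both} networks; if instead $p_{i^\ast}^A < p_{i^\ast}^B$ you are already in the node-strict case). If $p_{i^\ast}^A = p_{i^\ast}^B = 0$, you must instead exhibit a chain of flagged rings from some node $k$ with $p_k^A>0$ along edges with $q^A>0$ reaching $i^\ast$, and such a chain need not exist: the paper's standing assumption~\eqref{eq:q_j>0} guarantees positive \emph{incoming} influence at every node, not reachability from a node with positive external rate. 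Indeed, the strict statement can simply fail without a positivity assumption --- take $p_j^A=p_j^B=0$ for all $j$ and let the networks differ in a single edge weight; then no node ever adopts in either network, so $f_A \equiv f_B \equiv 0$. So the strict half of the lemma implicitly carries the positivity hypotheses of the cited source (e.g., $p_j>0$ for all $j$, or at least that $i^\ast$ adopts with positive probability under the $A$-dynamics), and your proof should state that it invokes them; once such a hypothesis is in place, your event construction (flagged rings adopting $i^\ast$ in both networks, followed by one unflagged ring along $(i^\ast,j^\ast)$, nothing else firing) goes through and, combined with the a.s.\ monotonicity, yields $f_A(t)<f_B(t)$ for all $t>0$.
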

	\section{Analytic tools.}
\label{sec:general_results}
In this section we introduce several novel analytical tools. These tools will be later used to analyze the effect of heterogeneity in the heterogeneous discrete Bass model~\eqref{eq:general_model}.

\subsection{Master equations.}
\label{subsec:master_equations}

In  order to analytically compute the expected number of adopters, we derive the master equations for a general heterogeneous network with $M$ nodes, as follows.
Let $(S^{m_1},\dots,S^{m_n})(t)$ denote the event that at time $t$, nodes $\{m_1,\dots,m_n\}$ are non-adopters, where $1\leq n\leq M$, $m_i\in \{1,\dots,M\}$, and $m_i\neq m_j$ if $i\neq j$. Let $[S^{m_1},\dots,S^{m_n}](t)$ denote the probability that such an event occurs.

\begin{lemma}
	\label{lem:masterHeterofc}
	The master equations for the heterogeneous discrete Bass model~\eqref{eq:general_model} are
	\begin{subequations}
		\label{eqs:masterHeterofc}
		\begin{equation}
			\label{eq:masterHeterofc}
			\begin{aligned}
				\frac{d}{dt}&[S^{m_1},\dots,S^{m_n}](t)=\\ &-\left(\sum_{i=1}^{n}p_{m_i}+\sum_{j=n+1}^{M}\sum_{i=1}^{n} q_{l_j,m_i}\right)[S^{m_1},\dots,S^{m_n}](t) +\sum_{j=n+1}^{M}\left(\sum_{i=1}^{n} q_{l_j,m_i}\right)[S^{m_1},\dots,S^{m_n},S^{l_j}](t),
			\end{aligned}
		\end{equation}
		for any $\{m_1,\dots,m_n\} \subsetneq \{1,\dots,M\}$, where $\{l_{n+1},\dots,l_M\}=\{1,\dots,M\}\backslash\{m_1,\dots,m_n\}$, and
		\begin{equation}
			\label{eq:masterHeterofcM}
			\frac{d}{dt}[S^{1},S^{2},\dots,S^{M}](t)=-\left(\sum_{i=1}^M p_i\right)[S^{1},S^{2},\dots,S^{M}](t),
		\end{equation}
		subject to the initial conditions
		\begin{equation}
			\label{eq:masterHeterofcInitial}
			[S^{m_1},\dots,S^{m_n}](0)=1, \qquad \forall \{m_1,\dots,m_n\}\subset \{1,\dots,M\}.
		\end{equation}
	\end{subequations}
\end{lemma}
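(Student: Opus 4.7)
The plan is to derive these as Kolmogorov forward equations via an infinitesimal-time argument. Since once a consumer adopts she remains an adopter for all time, the event $(S^{m_1},\dots,S^{m_n})(t+dt)$ is contained in the event $(S^{m_1},\dots,S^{m_n})(t)$. Hence
\begin{equation*}
[S^{m_1},\dots,S^{m_n}](t+dt) = [S^{m_1},\dots,S^{m_n}](t) - \Delta(t,dt),
\end{equation*}
where $\Delta(t,dt)$ is the probability that all of $m_1,\dots,m_n$ are non-adopters at $t$ but at least one of them adopts during $(t,t+dt)$.

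First I would argue that the probability of two or more adoptions occurring in $(t,t+dt)$ is $O(dt^2)$, so to leading order $\Delta(t,dt)$ is a sum over $i=1,\dots,n$ of the probability that $m_i$ is the one that adopts. Using~\eqref{eq:general_model} together with $X_{m_k}(t)=0$ for $k=1,\dots,n$, the instantaneous adoption rate of node $m_i$ on the event $(S^{m_1},\dots,S^{m_n})(t)$ is
\begin{equation*}
p_{m_i} + \sum_{k\neq m_i} q_{k,m_i}X_k(t) = p_{m_i} + \sum_{j=n+1}^M q_{l_j,m_i}X_{l_j}(t).
\end{equation*}
Multiplying by $dt$, summing over $i$, taking expectations restricted to $(S^{m_1},\dots,S^{m_n})(t)$, and dividing by $dt$ yields
\begin{equation*}
\lim_{dt\to 0}\frac{\Delta(t,dt)}{dt} = \sum_{i=1}^n p_{m_i}\,[S^{m_1},\dots,S^{m_n}](t) + \sum_{i=1}^n\sum_{j=n+1}^M q_{l_j,m_i}\,\mathrm{Prob}\bigl(X_{l_j}(t)=1,\ (S^{m_1},\dots,S^{m_n})(t)\bigr).
\end{equation*}

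The key step is then to apply the partition identity
\begin{equation*}
\mathrm{Prob}\bigl(X_{l_j}(t)=1,\ (S^{m_1},\dots,S^{m_n})(t)\bigr) = [S^{m_1},\dots,S^{m_n}](t) - [S^{m_1},\dots,S^{m_n},S^{l_j}](t),
\end{equation*}
obtained by splitting the event $(S^{m_1},\dots,S^{m_n})(t)$ according to whether $l_j$ has already adopted by time~$t$. Substituting this into the previous display and rearranging gives~\eqref{eq:masterHeterofc}. The boundary case~\eqref{eq:masterHeterofcM} is obtained by the same argument with $n=M$; then the index set $\{l_{n+1},\dots,l_M\}$ is empty, so only the $p_i$-terms survive. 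The initial condition~\eqref{eq:masterHeterofcInitial} is immediate from~\eqref{eq:general_initial}.

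The main obstacle is bookkeeping rather than depth: tracking the complement index set $\{l_{n+1},\dots,l_M\}=\{1,\dots,M\}\setminus\{m_1,\dots,m_n\}$ correctly across the two sums, and carefully justifying the negligibility of multiple adoptions in $(t,t+dt)$. The latter follows from the rates being bounded on any finite interval and the adoption times of distinct consumers being continuously distributed, so no deeper difficulty arises---the dynamics~\eqref{eq:general_model} define a pure-jump Markov chain on $\{0,1\}^M$, for which the Kolmogorov forward equations are standard.
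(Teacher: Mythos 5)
Your proposal is correct and follows essentially the same route as the paper's proof: your partition identity is exactly the paper's total-probability step $[S^{m_1},\dots,S^{m_n},I^{l_j}](t)=[S^{m_1},\dots,S^{m_n}](t)-[S^{m_1},\dots,S^{m_n},S^{l_j}](t)$, and your per-node rate decomposition matches the paper's ``destruction at rate $p_{m_i}$ or $q_{l_j,m_i}$'' bookkeeping. The only difference is presentational --- you make explicit the $O(dt^2)$ negligibility of multiple adoptions and the pure-jump Markov chain framing that the paper leaves implicit --- which is a harmless (indeed slightly more rigorous) elaboration of the same argument.
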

\begin{proof}
	See~\ref{app:lem:masterHeterofc}.
\end{proof}
The master equations~\eqref{eqs:masterHeterofc} constitute a linear system of $2^M-1$ differential equations for all possible subsets $\{m_1,\dots,m_n\}\subset \{1,\dots,M\}$.
These equations can be solved explicitly, as follows. By~(\ref{eqs:masterHeterofc}b-c),
\begin{equation}
	\label{eq:s_M sol}
	[S^1,\dots,S^M](t)=e^{-\left(\sum_{j=1}^{M}p_j\right)t}.
\end{equation}
Proceeding to solve~\eqref{eqs:masterHeterofc} backwards from $n=M-1$ to $n=1$ gives~(\ref{app:S^k})
\begin{equation}
	\label{eq:master_sol}
	[S^k](t)=\sum_{n=1}^{M}\sum_{\{m_1=k,m_2,\dots,m_n\}}c_{\{m_1,\dots,m_n\}}e^{-\left(\sum_{i=1}^{n}p_{m_i}+\sum_{j=n+1}^{M}\sum_{i=1}^{n} q_{l_j,m_i}\right)t},
\end{equation}
where $c_{\{m_1,\dots,m_n\}}$ are constants.
Once we solve for $\{[S^k]\}_{k=1}^{M}$, the expected fraction of adopters in the network is given by, see~\eqref{eq:n(t)_general} and~\eqref{eq:number_to_fraction},
\begin{equation}
	\label{eq:expectedHeterofc}
	f(t; \{p_i\}, \{q_{i,j}\})=1-\frac{1}{M}\sum\limits_{k=1}^{M}[S^k](t).
\end{equation}

For example, the master equations~\eqref{eqs:masterHeterofc} for $M=2$ read
\begin{subequations}
	\label{eqs:master_M=2}
	\begin{align}
		\label{eq:master_M=2}
		\frac{d}{dt}[S^i](t)&=-(p_i+q_{i-1,i})[S^i](t)+q_{i-1,i}[S^1,S^2](t), \qquad i=1,2,\\
		\label{eq:master_M=2b}
		\frac{d}{dt}[S^1,S^2](t)&=-(p_1+p_2)[S^1,S^2](t),\\
		[S^1](0)&=[S^2](0)=[S^1,S^2](0)=1.
	\end{align}
\end{subequations}
Solving this system for $M=2$ yields
\begin{equation*}
	[S^1](t)=a_1e^{-\left(p_1+q_{2,1}\right)t}+b_1e^{-\left(p_1+p_2\right)t}, \qquad [S^2](t)=a_2e^{-\left(p_2+q_{1,2}\right)t}+b_2e^{-\left(p_1+p_2\right)t}.
\end{equation*}
Therefore, by~\eqref{eq:expectedHeterofc},
\begin{equation}
	\label{eqs:fc_M=2}
	f(t)=1-\frac{1}{2}\sum_{j=1}^{2}\left[a_je^{-(p_j+q_{j-1,j})t}-b_je^{-(p_1+p_2)t}\right], \quad
	a_j=\frac{p_{j-1}}{p_{j-1}-q_{j-1,j}}, \qquad b_j=\frac{q_{j-1,j}}{p_{j-1}-q_{j-1,j}}.
\end{equation}
Similarly, when $M=3$,
\begin{subequations}
	\label{eqs:fc_M=3}
	\begin{equation}
		\label{eq:fc_M=3}
		\begin{aligned}
			f(t)=
			1-\frac{1}{3}\sum\limits_{j=1}^{3} \bigg[a_je^{-(p_j+q_{j-1,j}+q_{j+1,j})t}-b_je^{-(p_j+p_{j+1}+q_{j-1,j}+q_{j+2,j+1})t}+c_je^{-(p_1+p_2+p_3)t}\bigg],
		\end{aligned}
	\end{equation}
	where
	\begin{align}
		\label{eq:fc_M=3_coefs}
		a_j=1&+\bigg(1+\frac{q_{j-1,j}+q_{j+2,j+1}}{p_{j-1}-q_{j-1,j}-q_{j+2,j+1}}\bigg)
		\frac{q_{j+1,j}}{p_{j+1}+q_{j+2,j+1}-q_{j+1,j}} 
		\\&
		+\bigg(1+\frac{q_{j-2,j-1}+q_{j+1,j}}{p_{j+1}-q_{j-2,j-1}-q_{j+1,j}}\bigg)
		\frac{q_{j-1,j}}{p_{j-1}+q_{j-2,j-1}-q_{j-1,j}} 
		\nonumber\\&
		-\frac{q_{j+1,j}\frac{q_{j-1,j}+q_{j+2,j+1}}{p_{j-1}-q_{j-1,j}-q_{j+2,j+1}}+q_{j-1,j}\frac{q_{j-2,j-1}+q_{j+1,j}}{p_{j+1}-q_{j-2,j-1}-q_{j+1,j}}}{p_{j+1}+p_{j-1}-q_{j-1,j}-q_{j+1,j}},\nonumber\\
		b_j=&\bigg(1+\frac{q_{j-1,j}+q_{j+2,j+1}}{p_{j-1}-q_{j-1,j}-q_{j+2,j+1}}\bigg)\bigg(\frac{q_{j+1,j}}{p_{j+1}+q_{j+2,j+1}-q_{j+1,j}}+\frac{q_{j,j+1}}{p_{j}+q_{j-1,j}-q_{j,j+1}}\bigg),\\
		c_j=&\frac{q_{j+1,j}\frac{q_{j-1,j}+q_{j+2,j+1}}{p_{j-1}-q_{j-1,j}-q_{j+2,j+1}}+q_{j-1,j}\frac{q_{j-2,j-1}+q_{j+1,j}}{p_{j+1}-q_{j-2,j-1}-q_{j+1,j}}}{p_{j+1}+p_{j-1}-q_{j-1,j}-q_{j+1,j}}.
	\end{align}
\end{subequations}
\Remark \label{rem:q_{i,j}}{\rev The subscripts of $q_{i,j}$ in~\eqref{eqs:master_M=2}-\eqref{eqs:fc_M=3} are modulo $M$ and in $\{1,\dots,M\}$. For example, if $i=1$, then ``$i-1=M$", and if $i=M$, then ``$i+2=2$", etc.}

\Remark The master equations~\eqref{eqs:masterHeterofc} hold for heterogeneous (and homogeneous) networks {\em with any structure}. For example, we can have a one-sided circle by setting $q_{i,j}=0$ for $(j-i)\Mod M\neq 1$ (Section~\ref{subsec:one_side_circle}), a two-sided circle by setting $q_{i,j}=0$ for $|j-i| \Mod M\neq 1$ (Section~\ref{subsec:two_sided_circle}), a D-dimensional Cartesian structure by setting $q_{i,j}$ as in eq.~\eqref{eq:cart_hom} below (Section~\ref{sec:cart}), etc.

\subsection{Initial dynamics.}
\label{subsec:initial_dynamics}

We can use the master equations~\eqref{eqs:masterHeterofc} to analyze the initial dynamics, by deriving explicit expressions for $f'(0)$, $f''(0)$, and $f'''(0)$. We begin with the most general heterogeneity:
\begin{lemma}
	\label{lem:n_initial_dynamics}
	Consider the heterogeneous discrete Bass model~\eqref{eq:general_model}. 
	Then
	\begin{equation}
		\label{eq:n'(0)}
		f'(0)=\frac{1}{M}\sum_{j=1}^{M}p_j,\qquad
		f''(0)=\frac{1}{M}\left(\sum_{i=1}^{M}p_iq^i-\sum_{i=1}^{M}p_i^2\right),
	\end{equation}
	where $q^i$ is defined in~\eqref{eq:q_out}. In addition, if the heterogeneity in $q$ is mild, see~\eqref{eq:q_mild_het}, then 
	\begin{equation}
		\label{eq:n''(0)_mild}
		f''(0)=\frac{1}{M(M-1)}\left[\sum_{j=1}^{M}q_j\sum_{i=1}^{M}p_i-\sum_{j=1}^{M}q_jp_j\right]-\frac{1}{M}\sum_{i=1}^{M}p_i^2.
	\end{equation}
\end{lemma}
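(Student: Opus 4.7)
The plan is to work directly from the master equations~\eqref{eqs:masterHeterofc} together with the representation~\eqref{eq:expectedHeterofc}, which yields $f^{(m)}(0) = -\frac{1}{M}\sum_{k=1}^M \frac{d^m}{dt^m}[S^k](0)$ for every $m\geq 1$. The key simplification is that at $t=0$ every probability factor $[S^{m_1},\dots,S^{m_n}](0)$ equals~$1$ by~\eqref{eq:masterHeterofcInitial}, so the right-hand side of~\eqref{eq:masterHeterofc} collapses to a purely algebraic expression in $\{p_j\}$ and $\{q_{i,j}\}$.

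First I would specialize~\eqref{eq:masterHeterofc} with $n=1$, $m_1=k$. Evaluating at $t=0$, the ``incoming'' sum $\sum_{l\neq k} q_{l,k}$ in the coefficient of $[S^k]$ exactly cancels the prefactor of the $[S^k,S^l]$ terms, leaving $\frac{d}{dt}[S^k](0) = -p_k$. Summing over $k$ and dividing by $-M$ gives the claimed formula for $f'(0)$.

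For $f''(0)$, I would differentiate the $n=1$ master equation once more and evaluate at $t=0$:
\[
\frac{d^2}{dt^2}[S^k](0) \;=\; -(p_k+q_k)\,\frac{d}{dt}[S^k](0) \;+\; \sum_{l\neq k} q_{l,k}\,\frac{d}{dt}[S^k,S^l](0),
\]
using $\sum_{l\neq k} q_{l,k}=q_k$ from~\eqref{eq:q_on_node}. The derivative $\frac{d}{dt}[S^k,S^l](0)$ is obtained by the same cancellation trick applied to~\eqref{eq:masterHeterofc} with $n=2$, producing $-(p_k+p_l)$. Substituting, the mixed $p_kq_k$ terms cancel and I am left with $\frac{d^2}{dt^2}[S^k](0) = p_k^2 - \sum_{l\neq k}q_{l,k}p_l$. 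The key combinatorial step is then the index swap
\[
\sum_{k=1}^M \sum_{l\neq k} q_{l,k}\,p_l \;=\; \sum_{l=1}^M p_l \sum_{k\neq l} q_{l,k} \;=\; \sum_{l=1}^M p_l\,q^l,
\]
by the definition~\eqref{eq:q_out} of $q^l$. Dividing by $-M$ yields the stated expression for $f''(0)$.

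The mild-heterogeneity refinement~\eqref{eq:n''(0)_mild} then follows by substituting $q_{i,j} = q_j/(M-1)$ from~\eqref{eq:q_mild_het} into $q^l = \sum_{k\neq l} q_{l,k}$, so that $q^l = \frac{1}{M-1}\bigl(\sum_{j} q_j - q_l\bigr)$, and plugging this identity into the general formula for $f''(0)$ just derived. The main obstacle I anticipate is purely notational: tracking the role of the complementary index set $\{l_{n+1},\dots,l_M\} = \{1,\dots,M\}\setminus\{m_1,\dots,m_n\}$ in~\eqref{eq:masterHeterofc}, and correctly recognizing that the cancellation between the coefficient of $[S^{m_1},\dots,S^{m_n}]$ and the initial values $[S^{m_1},\dots,S^{m_n},S^{l_j}](0)=1$ is the mechanism that makes each $f^{(m)}(0)$ reduce to a closed-form polynomial in the network parameters.
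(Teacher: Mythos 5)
Your proposal is correct and takes essentially the same route as the paper's proof: you evaluate the master equations~\eqref{eqs:masterHeterofc} at $t=0$ so that the incoming-$q$ terms cancel against the coefficient of $[S^{m_1},\dots,S^{m_n}]$ to give $\frac{d}{dt}[S^{m_1},\dots,S^{m_n}](0)=-\sum_{i}p_{m_i}$, then differentiate the $n=1$ equation once more and substitute $q_{i,j}=q_j/(M-1)$ for the mild case, exactly as the paper does. The only difference is cosmetic: you make explicit the intermediate identity $\frac{d^2}{dt^2}[S^k](0)=p_k^2-\sum_{l\neq k}q_{l,k}p_l$ and the index swap $\sum_k\sum_{l\neq k}q_{l,k}p_l=\sum_l p_l q^l$, which the paper's terse proof leaves implicit.
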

\begin{proof}
	Substituting $t=0$ in~\eqref{eq:masterHeterofc} and using~\eqref{eq:masterHeterofcInitial} gives
	\begin{equation}
		\label{eq:s'_0}
		\frac{d}{dt}[S^{m_1},\dots,S^{m_n}](0)=-\left(\sum_{i=1}^{n}p_{m_i}+\sum_{j=n+1}^{M}\sum_{i=1}^{n} q_{l_j,m_i}\right)+\sum_{j=n+1}^{M}\left(\sum_{i=1}^{n} q_{l_j,m_i}\right)=-\sum_{i=1}^{n}p_{m_i}.
	\end{equation}
	Hence, by~\eqref{eq:number_to_fraction} and~\eqref{eq:expectedHeterofc}, we get eq.~\eqref{eq:n'(0)} for~$f'(0)$. 
	Differentiating~\eqref{eq:masterHeterofc} and using~\eqref{eq:s'_0} gives the equation for $f''(0)$. Substituting~\eqref{eq:q_mild_het} in~\eqref{eq:n'(0)} gives~\eqref{eq:n''(0)_mild}.
\end{proof}
These explicit expressions allow us to determine on which network the diffusion is initially faster.
The expressions for the derivatives become simpler when the heterogeneity is just in~$q$:\footnote{This is the case, e.g., for homogeneous Cartesian networks, see eq.~\eqref{eq:cart_hom} in Section~\ref{sec:cart}.}
\begin{corollary}
	\label{cor:n'''(0)_mild}
	Consider a network of size $M$ which is homogeneous in $p$ and heterogeneous in $q$. Then~\eqref{eq:n'(0)} reads
	\begin{equation}
		\label{eq:n''(0)_p_hom}
		f'(0)=p,\qquad f''(0)=p\left(\frac{1}{M}\sum_{j=1}^{M}q_{j}-p\right).
	\end{equation}
	If, in addition, the heterogeneity in $q$ is mild, see~\eqref{eq:q_mild_het}, then
	\begin{equation}
		\label{eq:n'''(0)_mild}
		f'''(0)=p^3+\frac{p}{M}\left(\frac{M-2}{(M-1)^2}\left(\sum_{i=1}^{M}q_i\right)^2-\frac{2M-3}{(M-1)^2}\sum_{i=1}^{M}q_i^2-4p\sum_{i=1}^{M}q_i\right).
	\end{equation}
\end{corollary}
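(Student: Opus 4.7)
The formulas for $f'(0)$ and $f''(0)$ follow by direct specialization of Lemma~\ref{lem:n_initial_dynamics} with $p_i\equiv p$: the first is immediate, and for the second one uses the identity $\sum_i q^i = \sum_j q_j$ (both equal $\sum_{i\neq j} q_{i,j}$), so that $\frac{1}{M}(\sum_i p\, q^i - Mp^2) = p(\frac{1}{M}\sum_j q_j - p)$. These two identities do not require mild heterogeneity; only the formula for $f'''(0)$ does.

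The strategy for $f'''(0)$ is to differentiate the master equations~\eqref{eqs:masterHeterofc} twice and evaluate recursively at $t=0$. Writing $S_n(t) := [S^{m_1},\ldots,S^{m_n}](t)$, $A_n := \sum_i p_{m_i}+Q_n$ with $Q_n := \sum_{j=n+1}^M\sum_{i=1}^n q_{l_j,m_i}$, and $B_{n,j} := \sum_{i=1}^n q_{l_j,m_i}$, Lemma~\ref{lem:n_initial_dynamics} already gives $S_n'(0) = -\sum_i p_{m_i}$. Differentiating once more and substituting $p_i\equiv p$ yields, after the cancellation $(A_n-\sum_j B_{n,j})\sum_i p_{m_i} = np\cdot np$, the clean formula $S_n''(0) = (np)^2 - p\,Q_n$. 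A further differentiation gives
\begin{equation*}
S_n'''(0) = -A_n S_n''(0) + \sum_{j=n+1}^M B_{n,j}\bigl[((n+1)p)^2 - p\, Q_{n+1}^{(j)}\bigr],
\end{equation*}
where $Q_{n+1}^{(j)}$ is the analogue of $Q_n$ for the enlarged non-adopter set $\{m_1,\ldots,m_n,l_j\}$.

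Specializing to $n=1$ with $m_1=k$, so that $Q_1=q_k$ and $B_{1,j}=q_{j,k}$, the key sum to evaluate is $\sum_{j\neq k} q_{j,k}\, Q_2^{(j)} = \sum_{j\neq k}q_{j,k}\sum_{\ell\neq j,k}(q_{\ell,j}+q_{\ell,k})$. The $q_{\ell,k}$ piece collapses via $\sum_{\ell\neq j,k}q_{\ell,k} = q_k - q_{j,k}$, while the $q_{\ell,j}$ piece is the genuine cross term $U_k := \sum_{j\neq k}q_{j,k}\sum_{\ell\neq j,k}q_{\ell,j}$. Collecting terms gives $S_1'''(0) = -p^3 + 4p^2 q_k - pU_k + p\sum_{j\neq k}q_{j,k}^2$, and $f'''(0) = -\frac{1}{M}\sum_k S_1'''(0)$ by~\eqref{eq:expectedHeterofc}.

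The final step is to impose mild heterogeneity $q_{i,j} = q_j/(M-1)$. Then $\sum_k\sum_{j\neq k}q_{j,k}^2 = \frac{1}{M-1}\sum_i q_i^2$, and a short count of the $(M-2)$ admissible $\ell$'s gives $\sum_k U_k = \frac{M-2}{(M-1)^2}\bigl[(\sum_i q_i)^2 - \sum_i q_i^2\bigr]$. Combining the coefficients of $\sum_i q_i^2$ via $-\frac{p(M-2)}{M(M-1)^2} - \frac{p}{M(M-1)} = -\frac{p(2M-3)}{M(M-1)^2}$ produces the stated formula. The main obstacle I anticipate is the bookkeeping of $U_k$: it is a triple sum with the exclusion $\ell\neq j,k$, and one must be careful to track the diagonal contributions that appear after substituting the mild heterogeneity relation, since two negative contributions to $\sum q_i^2$ (from $U_k$ and from $\sum q_{j,k}^2$) must combine correctly to yield the coefficient $-(2M-3)/(M-1)^2$.
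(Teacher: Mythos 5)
Your proposal is correct and takes essentially the same route as the paper's proof in Appendix~C: repeated differentiation of the master equations~\eqref{eqs:masterHeterofc} at $t=0$, using $[S^{m_1},\dots,S^{m_n}]'(0)=-np$ to obtain the second derivatives of singleton and pair states and then the third derivative of singletons, exactly mirroring the paper's computations of $\frac{d^2}{dt^2}[S^i](0)=p(p-q_i)$, $\frac{d^2}{dt^2}[S^i,S^j](0)=p\bigl(4p-\frac{M-2}{M-1}(q_i+q_j)\bigr)$, and $\frac{d^3}{dt^3}[S^i](0)$. The only (minor) difference is that the paper substitutes the mild form~\eqref{eq:q_mild_het} into the master equations at the outset, whereas you carry general $\{q_{i,j}\}$ through the identities $S_n''(0)=(np)^2-pQ_n$ and the cross term $U_k$ and impose mildness only at the last step; I verified your intermediate formula $S_1'''(0)=-p^3+4p^2q_k-pU_k+p\sum_{j\neq k}q_{j,k}^2$ and the final coefficient combination $-\frac{p(M-2)}{M(M-1)^2}-\frac{p}{M(M-1)}=-\frac{p(2M-3)}{M(M-1)^2}$, and both check out.
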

\begin{proof}
	See~\ref{app:cor:n'''(0)_mild}.
\end{proof}
Finally, we consider the homogeneous case:
\begin{corollary}
	\label{cor:initial_hom}
	Consider a complete homogeneous network with $p$, $q$, and $M$, see~\eqref{eq:hom_conditions}. Then
	\begin{equation}
		\label{eq:initial_hom}
		f'(0)=p,\qquad f''(0)=p(q-p),\qquad f'''(0)=p\left(p^2-4pq+\frac{M-3}{M-1}q^2\right).
	\end{equation}
\end{corollary}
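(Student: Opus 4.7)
The statement is a direct specialization of Lemma~\ref{lem:n_initial_dynamics} and Corollary~\ref{cor:n'''(0)_mild} to the homogeneous case. Indeed, the complete homogeneous network with parameters $(p,q,M)$ satisfies $p_j\equiv p$ and $q_{i,j}=\frac{q}{M-1}$ for all $i\ne j$, which is a special case of mild heterogeneity in $q$ (see~\eqref{eq:q_mild_het}), with $q_j=\sum_{k\ne j}q_{k,j}\equiv q$ and also $q^i\equiv q$.

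The plan is therefore to substitute these homogeneous parameters into the three formulas already proved. First, \eqref{eq:n'(0)} with $p_j\equiv p$ gives immediately $f'(0)=\frac{1}{M}\sum_{j=1}^M p = p$. Next, the second formula in \eqref{eq:n'(0)} with $p_i\equiv p$ and $q^i\equiv q$ yields $f''(0)=\frac{1}{M}(Mpq-Mp^2)=p(q-p)$, which can equivalently be obtained from \eqref{eq:n''(0)_mild}.

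The only step requiring a brief calculation is $f'''(0)$. Since the network is mildly heterogeneous in $q$ and homogeneous in $p$, Corollary~\ref{cor:n'''(0)_mild} applies. Substituting $q_i\equiv q$ (so that $\sum_i q_i=Mq$ and $\sum_i q_i^2=Mq^2$) into~\eqref{eq:n'''(0)_mild} gives
\begin{equation*}
f'''(0)=p^3+\frac{p}{M}\left(\frac{(M-2)M^2 q^2-(2M-3)Mq^2}{(M-1)^2}-4pMq\right)=p^3-4p^2 q+\frac{pq^2\bigl(M(M-2)-(2M-3)\bigr)}{(M-1)^2}.
\end{equation*}
The claim then follows from the factorization $M(M-2)-(2M-3)=M^2-4M+3=(M-1)(M-3)$, which collapses the last fraction to $\frac{M-3}{M-1}q^2$, yielding $f'''(0)=p\bigl(p^2-4pq+\frac{M-3}{M-1}q^2\bigr)$.

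There is no real obstacle here beyond the one-line algebraic identity above; the entire content of the corollary is that the previously derived heterogeneous formulas specialize cleanly in the homogeneous case, and the factor $\frac{M-3}{M-1}$ emerges naturally from the difference $M(M-2)-(2M-3)$ divided by $(M-1)^2$.
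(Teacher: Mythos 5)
Your proof is correct and takes essentially the same approach as the paper, whose entire proof is the one line ``This follows from Corollary~\ref{cor:n'''(0)_mild}'' --- exactly the specialization you carry out. Your substitutions ($q_j\equiv q$, $q^i\equiv q$, $p_i\equiv p$) and the factorization $M(M-2)-(2M-3)=(M-1)(M-3)$ simply make explicit the algebra the paper leaves implicit, and they check out.
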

\begin{proof}
	This follows from Corollary~\ref{cor:n'''(0)_mild}.	
\end{proof}

\subsection{CDF dominance condition.}
\label{subsec:cdf_dominance}

In this section, we derive a sufficient condition for the adoption in network~$A$ to be slower than in network~$B$. Let $A$ be a network with $M$ nodes. For a specific realization of the discrete model~\eqref{eq:general_model}, let $t_i^A$ denote the time between the $(i-1)$th and $i$th adoptions, where $i=1,\dots,M$ and $t_0^A:=0$. Therefore, the time of the $m$th adoption is
$$
T_m^A :=  t_0 + t_1 + \cdots + t_m , \qquad m=0,1,\dots,M,
$$
and the number of adopters at time~$t$ is given by
\begin{equation}
	\label{eq:N(t)}
	N_A(t) = \max\{ m \in \{0,\ldots,M\} \colon T_m^A \leq t\}. 
\end{equation}
The expected number of adopters in network $A$ is
$	n_A(t)=\E [N_A(t)]$.

Let $B$  be a different network with $M$ nodes, and define $t_i^B$, $T_m^B$, $N_B(t)$, and $n_B(t)$ in a similar manner. We now show that the adoption in $A$ is slower than in $B$, if $\{t_i^A\}$ and $\{t_i^B\}$ satisfy a certain CDF dominance condition:

\begin{theorem}
	\label{thm:CDF-dominance}
	Let $\{t_i^A(\omega_i)\}_{i=1}^M$ and $\{t_i^B(\omega_i)\}_{i=1}^M$ be two sequences of  independent nonnegative random variables that satisfy the  CDF dominance condition
	\begin{subequations}
		\label{eqs:equ:1_2}
		\begin{equation}
			\label{equ:1}
			F_{t_i^A}(\tau) \leq F_{t_i^B}(\tau), \qquad 1\leq i \leq M, \qquad \tau\geq 0,
		\end{equation}
		such that there exists at least one index $1\leq j\leq M$ for which
		\begin{equation}
			\label{equ:2}
			F_{t_j^A}(\tau)<F_{t_j^B}(\tau), \qquad \tau> 0.
		\end{equation} 
	\end{subequations}
	Then the expected number of adopters in $A$ is less than that in $B$, i.e.
	\[ n_A(t)<n_B(t), \qquad  0<t<\infty . \]
\end{theorem}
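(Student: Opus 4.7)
The plan is to realize the two sequences on a common probability space via the quantile coupling, so that the CDF dominance hypothesis~\eqref{equ:1} becomes a pointwise almost-sure inequality, and then to propagate that inequality through the partial sums up to the counting processes $N_A(t)$ and $N_B(t)$ from~\eqref{eq:N(t)}.

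First I would introduce i.i.d.\ uniforms $U_1,\dots,U_M\sim\mathrm{Uniform}(0,1)$ and define
\[
\tilde t_i^A := F_{t_i^A}^{-1}(U_i), \qquad \tilde t_i^B := F_{t_i^B}^{-1}(U_i), \qquad i=1,\dots,M,
\]
with $F^{-1}(u):=\inf\{\tau\ge 0 : F(\tau)\ge u\}$ the generalized inverse. By the probability integral transform, the $\{\tilde t_i^A\}$ (resp.\ $\{\tilde t_i^B\}$) are independent and have the same marginal laws as $\{t_i^A\}$ (resp.\ $\{t_i^B\}$), so the partial sums $\tilde T_m^A,\tilde T_m^B$ and the counts $\tilde N_A(t),\tilde N_B(t)$ defined from them share the laws of the originals. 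The monotonicity $F\le G \Rightarrow F^{-1}\ge G^{-1}$ on $(0,1)$, applied to~\eqref{equ:1}, gives $\tilde t_i^A\ge \tilde t_i^B$ almost surely for every $i$; summing, $\tilde T_m^A\ge \tilde T_m^B$ a.s.\ for every $m$, and therefore $\tilde N_A(t)\le \tilde N_B(t)$ a.s.\ for every $t$. Taking expectations yields the weak inequality $n_A(t)\le n_B(t)$.

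For the strict inequality I would use the tail-sum identity $N(t)=\sum_{m=1}^{M}\mathbf 1_{T_m\le t}$, which gives
\[
n_B(t)-n_A(t)=\sum_{m=1}^{M}\bigl(\Pr(T_m^B\le t)-\Pr(T_m^A\le t)\bigr),
\]
where every summand is nonnegative by the weak part. It then suffices to show that the term $m=j$ (with $j$ as in~\eqref{equ:2}) is strictly positive. Conditioning on $\tilde T_{j-1}^A$, and using that $U_j$ is independent of $(U_1,\dots,U_{j-1})$, whenever $s:=t-\tilde T_{j-1}^A>0$ the event $\{\tilde t_j^B\le s<\tilde t_j^A\}$ corresponds to $U_j\in(F_{t_j^A}(s),F_{t_j^B}(s)]$, an interval of Lebesgue length $F_{t_j^B}(s)-F_{t_j^A}(s)>0$ by~\eqref{equ:2}. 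On this event $\tilde T_j^B\le \tilde T_{j-1}^A+\tilde t_j^B\le t$ while $\tilde T_j^A=\tilde T_{j-1}^A+\tilde t_j^A>t$, so the event $\{T_j^A>t\ge T_j^B\}$ receives strictly positive probability.

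The main obstacle is the technical prerequisite that $\Pr(\tilde T_{j-1}^A<t)>0$ for every $t>0$, which is trivial for $j=1$ (since $T_0\equiv 0$) but for $j\ge 2$ requires ruling out pathological support in the $\{t_i^A\}_{i<j}$. In the applications of interest, the $t_i$'s arise as exponentially distributed interadoption times of the Bass model~\eqref{eq:general_model}, hence admit a density that is positive on all of $(0,\infty)$, and the condition is automatic; the general statement can be covered by a mild support assumption noted in passing. Once this is in place, a Fubini/conditioning argument converts the conditional lower bound above into $\Pr(T_j^B\le t)>\Pr(T_j^A\le t)$ for every $t>0$, and hence $n_A(t)<n_B(t)$ on $(0,\infty)$.
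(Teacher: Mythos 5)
Your proposal is correct and follows essentially the same route as the paper's proof: the paper's construction on $(\Omega_i',P_i')=([0,1],\text{Lebesgue})$ with ${t'}_i^A=F_{t_i^A}^{-1}$ is exactly your quantile coupling, and your passage from $\tilde t_i^A\ge \tilde t_i^B$ through the partial sums to $n_A\le n_B$ via the tail-sum identity $\E[N(t)]=\sum_{m=1}^M\Pr(T_m\le t)$ coincides with the paper's displays~\eqref{eq:P(n_T(t)>=n-B}--\eqref{eq:expected_inequality}. The one place you go beyond the paper is the strictness step, and your extra care there is warranted: the paper infers $\Pr({T'}_k^A\le t)<\Pr({T'}_k^B\le t)$ for $j\le k\le M$ directly from the pointwise inequality ${T'}_k^A(\omega')>{T'}_k^B(\omega')$, which for general nonnegative variables requires exactly the positivity $\Pr(\tilde T_{j-1}^A<t)>0$ that you isolate --- without it (e.g.\ $t_1^A=t_1^B\equiv 5$ a.s.\ and $t_2^A,t_2^B$ exponential with different rates) one has $n_A(t)=n_B(t)=0$ for $t<5$, so the theorem as literally stated needs the mild support hypothesis you flag, which is automatic in all of the paper's applications because the interadoption times there are (mixtures of) exponentials with full support on $(0,\infty)$. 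In short, your conditioning argument for the $m=j$ summand is a genuine and needed sharpening of the paper's strictness claim rather than a deviation from its method.
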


\begin{proof}
	See~\ref{app:CDF-dominance}. 
\end{proof}	
		\section{Heterogeneity in complete networks.}
\label{sec:complete}

In this section we consider the qualitative effect of heterogeneity in complete networks. To do that, we compare the adoption in a heterogeneous network with that in a homogeneous network that has the same number of nodes~$M$, the same average $\{p_j\}$, and the same average $\{q_{i,j}\}$ or $\{q_j\}$. Thus, we compare $f^{\rm het}(t; \{p_i\}_{i=1}^M, \{q_{i,j}\}_{i,j=1}^M)$ 
or $f^{\rm het}(t; \{p_i\}_{i=1}^M, \{q_{j}\}_{j=1}^M)$ with $f^{\rm hom}(t; p,q,M)$, where
\begin{equation}
	\label{eq:fair}
	p=\frac{1}{M}\sum_{i=1}^Mp_i, \qquad
	q=\frac{1}{M}\sum_{i=1}^M\sum_{\substack{j=1\\j\neq i}}^Mq_{i,j}=\frac{1}{M}\sum_{j=1}^{M}q_j.
\end{equation}

\subsection{Heterogeneity in $p$.}
\label{subsec:complete_p}

We begin with complete networks that are heterogeneous in $p$ and homogeneous in $q$, i.e.,  
\begin{equation}
	\label{eq:p_het_q_hom}
	q_{i,j}\equiv \frac{q}{M-1},\qquad \forall j\neq i.
\end{equation}
When the network has just two nodes, we can use the master eqs.~\eqref{eqs:master_M=2} for $M=2$ to show
that heterogeneity in $p$ always slows down the diffusion:
\begin{lemma}
	\label{lem:compare_M=2p}
	Consider a heterogeneous network with $M=2$, $p_1\neq p_2$ and $q_{1,2}=q_{2,1}\equiv q$, and 
	let  $p=\frac{p_1+p_2}{2}$. Then
	\begin{equation*}
		f^{\rm het}(t; \{p_1,p_2\},q)<f^{\rm hom}(t; p,q,M=2), \qquad 0<t<\infty.
	\end{equation*}
\end{lemma}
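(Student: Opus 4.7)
The plan is to apply the CDF dominance condition of Theorem~\ref{thm:CDF-dominance} to the two inter-adoption waiting times. Let $t_1$ be the time of the first adoption and $t_2$ the waiting time between the first and second adoptions. In both processes these two waiting times are independent: by the race-of-exponentials property, $t_1$ is independent of the identity of the first adopter, and by the Markov property $t_2$ conditional on that identity is independent of $t_1$; marginalising in the identity preserves this unconditional independence. Hence Theorem~\ref{thm:CDF-dominance} is applicable.

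I would then identify the marginal laws. In both networks the initial adoption rate is $p_1+p_2=2p$, so $t_1^{\rm het}\sim\mathrm{Exp}(p_1+p_2)\sim t_1^{\rm hom}$, and $F_{t_1^{\rm het}}\equiv F_{t_1^{\rm hom}}$. In the homogeneous process the second inter-arrival is $t_2^{\rm hom}\sim\mathrm{Exp}(p+q)$. In the heterogeneous process, node $i\in\{1,2\}$ adopts first with probability $p_i/(p_1+p_2)$, and conditional on this event the remaining nonadopter experiences rate $p_{3-i}+q$, so $t_2^{\rm het}$ is a mixture of two exponentials:
\begin{equation*}
F_{t_2^{\rm het}}(\tau)=1-\frac{p_1 e^{-(p_2+q)\tau}+p_2 e^{-(p_1+q)\tau}}{p_1+p_2},\qquad F_{t_2^{\rm hom}}(\tau)=1-e^{-(p+q)\tau}.
\end{equation*}

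The remaining step is to establish the strict inequality $F_{t_2^{\rm het}}(\tau)<F_{t_2^{\rm hom}}(\tau)$ for $\tau>0$, which after dividing by $e^{-q\tau}$ reduces to
\begin{equation*}
\frac{p_1 e^{-p_2\tau}+p_2 e^{-p_1\tau}}{p_1+p_2}>e^{-p\tau}.
\end{equation*}
Writing $p_1=p+\delta$ and $p_2=p-\delta$ with $\delta\neq 0$, a short manipulation rewrites the left-hand side as $e^{-p\tau}\bigl[\cosh(\delta\tau)+(\delta/p)\sinh(\delta\tau)\bigr]$, so the inequality becomes $\cosh(\delta\tau)+(\delta/p)\sinh(\delta\tau)>1$. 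This holds strictly for $\tau>0$, since $\cosh(\delta\tau)\geq 1$ and $\delta$ and $\sinh(\delta\tau)$ share the same sign, making $(\delta/p)\sinh(\delta\tau)$ strictly positive. Combined with the equality of the CDFs for $t_1$, Theorem~\ref{thm:CDF-dominance} then yields $f^{\rm het}(t)<f^{\rm hom}(t)$ for all $t>0$.

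I expect the main delicate point to be justifying the independence of $t_1$ and $t_2$ in the heterogeneous process, since the law of $t_2$ genuinely depends on which node adopts first; once this is settled, the final inequality is an elementary hyperbolic identity.
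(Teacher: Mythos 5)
Your proof is correct, but it takes a genuinely different route from the paper's own proof of this lemma. The paper proves Lemma~\ref{lem:compare_M=2p} in~\ref{app:lem:compare_M=2p} purely at the level of the master equations~\eqref{eqs:master_M=2}: it sets $y(t)=\left([S^1]^{\rm het}+[S^2]^{\rm het}\right)-\left([S^1]^{\rm hom}+[S^2]^{\rm hom}\right)=2\left(f^{\rm hom}-f^{\rm het}\right)$, derives a linear ODE for~$y$, and concludes $y>0$ from the positivity result Lemma~\ref{lem:diff_eq_positive}, with an auxiliary application of the same lemma to show $[S^1]^{\rm het}<[S^2]^{\rm het}$. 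You instead invoke the probabilistic CDF dominance machinery of Theorem~\ref{thm:CDF-dominance} --- which is exactly how the paper proves the general-$M$ result, Theorem~\ref{thm:compare_p} --- so your argument is in effect the $M=2$ specialization of that theorem. Your independence argument (first-adoption time independent of the first adopter's identity by the race-of-exponentials property, then the Markov property conditionally on the identity) is in substance the proof of Lemma~\ref{lem:t_i_independent}, and you are right to flag it as the delicate point. Where you deviate from the paper's general machinery is the final inequality: the paper's Lemma~\ref{lem:F^heter_k<F^hom_k_pq} gets $F^{\rm het}_2<F^{\rm hom}_2$ from strict concavity of $1-e^{-\lambda t}$ in~$\lambda$ combined with the Chebyshev sum inequality (Lemma~\ref{lem:iChebyshev}), whereas you exploit $M=2$ to write $t_2^{\rm het}$ as an explicit two-point mixture and verify $\cosh(\delta\tau)+(\delta/p)\sinh(\delta\tau)>1$ directly (note this uses $p>0$, which holds since $p_1\neq p_2$ forces $p_1+p_2>0$). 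The trade-off: the paper's ODE proof requires no coupling or waiting-time decomposition and its comparison-lemma technique transfers to settings where inter-adoption times are intractable, while your route makes the mechanism transparent (the first adoption is equally fast, the second strictly slower in the heterogeneous network) and generalizes immediately to arbitrary~$M$.
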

\begin{proof}
	See~\ref{app:lem:compare_M=2p}.	
\end{proof}

One could try to generalize this result to any network size~$M$, by induction on the network size $M$. To do that, we need a result that global dominance between two networks is preserved when we add a new identical node to both networks.  As Lemma~\ref{lem:ABC_counter_2} in Section~\ref{subsec:abc_lemma} will show, however, this is not always the case. Therefore, we take a different approach, and generalize Lemma~\ref{lem:compare_M=2p} to any network size~$M$, by making use of the {\em CDF dominance condition}:

\begin{theorem}
	\label{thm:compare_p}
	Consider a complete graph with $M$ nodes which are heterogeneous in $\{p_i\}_{i = i}^M$ and homogeneous in~$q$, see~\eqref{eq:p_het_q_hom}. Let $p =  \frac{1}{M} \sum_{i = 1}^M p_i$. 
	Then 
	$$
	f^{\rm het}(t; \{p_i\}_{i=1}^M,q) < f^{\rm hom}(t; p,q,M), \qquad 0<t<\infty.
	$$
\end{theorem}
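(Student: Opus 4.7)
My plan is to deduce the theorem from the CDF dominance condition (Theorem~\ref{thm:CDF-dominance}), applied to the inter-adoption gap times $\{t_i^A\}$ of the heterogeneous network~$A$ and $\{t_i^B\}$ of the homogeneous network~$B$. Since $B$ is complete and homogeneous, the total rate of the $i$-th adoption depends only on the current count of adopters, so the $\{t_i^B\}$ are genuinely independent with $t_i^B\sim\mathrm{Exp}(\lambda_i^B)$, where $\lambda_i^B=(M-i+1)\bigl(p+\tfrac{(i-1)q}{M-1}\bigr)$. In $A$, conditional on the adopter set $S_{i-1}$ after the $(i-1)$-th adoption, $t_i^A$ is exponential with state-dependent rate $\lambda_i^A(S_{i-1})=\sum_{j\notin S_{i-1}}p_j+(M-i+1)\tfrac{(i-1)q}{M-1}$, and hence marginally a mixture of exponentials.

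My goal is to establish $F_{t_i^A}(\tau)\le F_{t_i^B}(\tau)$ for every $i\ge 1$ and $\tau>0$, with strict inequality at $i=2$. Applying Jensen's inequality to the convex map $x\mapsto e^{-x\tau}$ gives
\[
P(t_i^A>\tau)=\mathbb{E}\bigl[e^{-\lambda_i^A(S_{i-1})\tau}\bigr]\;\ge\; e^{-\mathbb{E}[\lambda_i^A(S_{i-1})]\tau},
\]
so the CDF comparison reduces to the moment bound $\mathbb{E}\bigl[\sum_{j\in S_{i-1}}p_j\bigr]\ge(i-1)p$. Writing the left-hand side as $\sum_j p_j\,P(j\in S_{i-1})$ and using $\sum_j P(j\in S_{i-1})=i-1$, Chebyshev's sum inequality reduces the task further to the monotonicity claim
\[
p_j\ge p_k\quad\Longrightarrow\quad P(j\in S_{i-1})\ge P(k\in S_{i-1}),
\]
i.e.\ nodes with larger external rates are more likely to be among the first $i-1$ adopters. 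For $i=1$ the two CDFs coincide ($t_1^A,t_1^B\sim\mathrm{Exp}(Mp)$), while for $i=2$ one has $\mathbb{E}[\lambda_2^A(S_1)]=P-\sum_k p_k^2/P+q<P-p+q=\lambda_2^B$ by Cauchy--Schwarz (strictly, since $\{p_j\}$ is not constant), and moreover Jensen's inequality is strict because the first adopter is genuinely random (size-biased in $\{p_j\}$), supplying the strict index required by Theorem~\ref{thm:CDF-dominance}.

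The main obstacle is the monotonicity step. My plan is to prove it by a swap coupling: represent the process via independent external clocks $E_\ell\sim\mathrm{Exp}(p_\ell)$ and internal delays $I_{\ell,m}\sim\mathrm{Exp}(q/(M-1))$, and compare the original process with the one obtained by interchanging the external rates of nodes $j$ and $k$. The swapped process equals the original after relabeling $j\leftrightarrow k$, so $P(j\in S_{i-1}^{\mathrm{orig}})=P(k\in S_{i-1}^{\mathrm{swap}})$; combining this relabeling symmetry with the stochastic monotonicity of each adoption time $\tau_\ell$ in every $p_m$ (an immediate coupling fact) yields the desired ordering of the probabilities $P(j\in S_{i-1})$. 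A secondary subtlety is that the $t_i^A$ are only conditionally (not marginally) independent, so Theorem~\ref{thm:CDF-dominance} does not apply verbatim in its stated form; I would handle this by passing through the tail-sum representation $n_A(t)=\sum_{m=1}^M P(T_m^A\le t)$ and deriving the stochastic ordering $T_m^A\ge_{st}T_m^B$ directly from the conditional Jensen/Chebyshev bounds, which then yields $n_A(t)<n_B(t)$ and hence the theorem.
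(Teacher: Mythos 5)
Your main line of attack coincides with the paper's: the identity $F_{t_1^A}=F_{t_1^B}$, Jensen's inequality applied to the exponential mixture $P(t_i^A>\tau)=\E[e^{-\lambda_i^A(S_{i-1})\tau}]$, the reduction to the moment bound $\E[\sum_{j\in S_{i-1}}p_j]\geq (i-1)p$, and Chebyshev's sum inequality with weights monotone in $p_j$ are exactly the ingredients of the paper's Lemma~\ref{lem:F^heter_k<F^hom_k_pq} (proved via Lemma~\ref{lem:i_is_first} for the size-biased weights and Lemma~\ref{lem:iChebyshev}), and your strict inequality at $i=2$ matches the paper's computation. Where you differ in execution, there is a soft spot: the paper computes the conditional weights $w_k=p_k/(Mp)$ and $w_{k,m}$ explicitly and verifies the Chebyshev hypothesis step by step, whereas you propose the general monotonicity claim $p_j\geq p_k \Rightarrow P(j\in S_{i-1})\geq P(k\in S_{i-1})$ via a swap coupling. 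As sketched, that step does not close: membership in $S_{i-1}$ is a \emph{rank} event, and stochastic monotonicity of the absolute adoption times $\tau_\ell$ in each rate $p_m$ (which is what your coupling delivers) says nothing about ranks --- lowering $p_k$ also slows node $j$'s absolute adoption time, so the two effects do not combine as stated. The claim is true, but it needs a rank-sensitive argument, e.g.\ working with the embedded jump chain (next adopter chosen with probability proportional to $p_\ell+(i-1)\frac{q}{M-1}$ among nonadopters) and a path-swapping bijection on adoption orders, or the paper's explicit formulas.

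The more serious gap is your final assembly. You are right that independence of the gap times is the delicate hypothesis of Theorem~\ref{thm:CDF-dominance} --- the paper devotes Lemma~\ref{lem:t_i_independent} to establishing it (its written proof details the case $m=2$) and then applies the theorem verbatim, which you decline to do. But your substitute does not work as described: marginal CDF dominance $F_{t_i^A}\leq F_{t_i^B}$ of \emph{dependent} summands does not imply $T_m^A\geq_{\rm st}T_m^B$, and the natural induction through conditional laws fails because history-wise dominance is simply false. For example, with $M=2$, $p_1=1.5p$, $p_2=0.5p$, conditional on node~$2$ adopting first one has $t_2^A\sim{\rm Exp}(1.5p+q)$, which is stochastically \emph{smaller} than $t_2^B\sim{\rm Exp}(p+q)$; only the mixture over histories dominates, and ``conditional Jensen/Chebyshev bounds'' yield precisely the marginal statement you already have, not the sum ordering. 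To complete your route you would need a genuinely new ingredient --- e.g.\ joint concavity of the hypoexponential CDF $(r_1,\dots,r_m)\mapsto P(\sum_i {\rm Exp}(r_i)\leq t)$ in the rate vector, so that Jensen can be applied over entire adoption orders, followed by the Chebyshev comparison of the order-averaged rates --- none of which is supplied in your sketch. Alternatively, establish the independence claim of Lemma~\ref{lem:t_i_independent} and invoke Theorem~\ref{thm:CDF-dominance} exactly as the paper does.
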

\begin{proof}
	Let $t_k^{\rm hom}$ denote the time between the $(k-1)$th and $k$th adoptions in the homogeneous network, where $k=1, \dots, M$. Let $F^{\rm hom}_k(\tau):= {\rm Prob}(t_k^{\rm hom} \le \tau)$ denote the cumulative distribution function (CDF) of~$t_k^{\rm hom}$.
	Let $t_k^{\rm het}$ and $F^{\rm het}_k(\tau)$ be defined similarly for the heterogeneous network. {\rev We now introduce two auxiliary lemmas:
		\begin{lemma}
			\label{lem:F^heter_k<F^hom_k_pq}
			Consider a complete graph with $M$ nodes which are heterogeneous in $\{p_i\}_{i = 1}^M$ and in~$\{q_i\}_{i = 1}^M$ where $q_i$ is the influence exerted on (and not by) node $i$. Furthermore, assume that the $p_i$ values and $q_i$ values are positively correlated, i.e. $p_1\leq p_2 \leq \dots \leq p_M$ and $q_1\leq q_2 \leq\dots \leq q_M$. Let $p =  \frac{1}{M} \sum_{i = 1}^M p_i$ and $q =  \frac{1}{M} \sum_{i = 1}^M q_i$. 
			Let $t_1^{\rm hom}$ denote the time until the first adoption, and $t_k^{\rm hom}$ denote the time between the $(k-1)$th and $k$th adoptions in the homogeneous network
			for $k=2, \dots, M$. Let $F^{\rm hom}_k(t):= {\rm Prob}(t_k^{\rm hom} \le t)$ denote the cumulative distribution function (CDF) of~$t_k^{\rm hom}$ for $k=1, \dots, M$.
			Let $t_k^{\rm het}$ and $F^{\rm het}_k(t)$ be defined similarly for the heterogeneous network.
			Then $	F^{\rm het}_1(t)=F^{\rm hom}_1(t)$ for $t\geq 0$, and
			\begin{equation}
				\label{eq:F^heter_k<F^hom_k_pq}
				F^{\rm het}_k(t)  <  F^{\rm hom}_k(t), \qquad k = 2, \dots, M, \qquad 0<t<\infty.
			\end{equation}
		\end{lemma}
		\begin{proof}
			See~\ref{app:cdf_dominance}.			
		\end{proof}
		\begin{lemma}
			\label{lem:t_i_independent}
			The random variables $(t_i)$ are independent.
		\end{lemma}
		\begin{proof}
			See~\ref{app:cdf_dominance}			
		\end{proof}
	}
	By Lemma~\ref{lem:F^heter_k<F^hom_k_pq}, (in the special case where $q_1=\dots=q_M=q$), $F^{\rm het}_1(\tau)  \equiv  F^{\rm hom}_1(\tau)$ for $\tau\geq 0$, and
	\begin{equation*}
		F^{\rm het}_k(\tau)<F^{\rm hom}_k(\tau), \qquad k = 2, \dots, M, \qquad 0
		<\tau<\infty.
	\end{equation*}
	In addition, by Lemma~\ref{lem:t_i_independent}, the random variables $\{t_k^{\rm hom}\}$ and the random variables $\{t_k^{\rm het}\}$ are independent. Hence, the conditions of Theorem~\ref{thm:CDF-dominance} are satisfied, and so Theorem~\ref{thm:compare_p} follows by~\eqref{eq:number_to_fraction}.	
\end{proof}
$\\$
\Remark
We could also prove Theorem~\ref{thm:compare_p} for small times using the explicit expressions for $f'(0)$ and $f''(0)$, see~\ref{app:small_time}.

The  results of this section show that
{\bf in complete networks, heterogeneity in~$\{p_j\}$ always slows down the diffusion.} {\rev This follows from the convexity of the model. Indeed, as the proof of Lemma~\ref{lem:F^heter_k<F^hom_k_pq} shows, the times between consecutive adoptions follow an exponential distribution, for which the CDF is convex in $\{p_j\}$. 
	
	To further motivate this result, consider a heterogeneous network with $M=2$ nodes, where $p_1=2p$, $p_2=0$, and $q_1=q_2=q$, and its homogeneous counterpart with $p_1=p_2=p$ and $q_1=q_2=q$. In both the homogeneous and the heterogeneous cases, the first adoption occurs at a rate of $2p$. In the homogeneous case, however, the second adoption occurs at a rate of $p+q$, whereas in the heterogeneous case it occurs at a rate of $q$. Hence, the adoption is faster in the homogeneous network. More generally, consider an $M=2$ heterogeneous network with $p_1=p+\varepsilon$, $p_2=p-\varepsilon$, and $q_1=q_2=q$, where $0<\varepsilon\leq p$. The first adoption occurs at a rate of $2p$ in both the heterogeneous and homogeneous networks. For the homogeneous network, the second adoption is at the rate of $p+q$. In the heterogeneous network, since $p_1>p_2$, then in the majority of cases, node $1$ is the first to adopt, in which case the second adoption is by node $2$ at a rate of $p-\varepsilon+q$. In the minority of cases, node $2$ is the first to adopt, in which case the second adoption is by node $1$ at a rate of $p+\varepsilon+q$. Therefore, the overall adoption in the heterogeneous case is slower. This intuition can be generalized to networks with $M$ nodes. The first adoption always occurs at the same rate for the homogeneous and heterogeneous cases, but subsequent adoptions are slower in the heterogeneous case.
}

\subsection{Mild heterogeneity in $q$.}
\label{subsec:complete_q}

We now consider complete networks which are  mildly heterogeneous in~$q$ but homogeneous in~$p$, i.e., 
$$
p_i\equiv p, \qquad i=1,\dots,M.
$$
\begin{theorem}
	\label{thm:compare_q}
	Consider a complete graph with $M$ nodes which are mildly heterogeneous in $\{q_i\}_{i = i}^M$, see~\eqref{eq:q_mild_het}, and homogeneous in $p$. Let $q =  \frac{1}{M} \sum_{i = 1}^M q_i$. 
	Then 
	$$
	f^{\rm het}(t; p, \{q_i\}_{i=1}^M) <  f^{\rm hom}(t; p,q,M), \qquad 0<t<\infty.
	$$
\end{theorem}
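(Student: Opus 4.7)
My plan is to mimic the argument used for Theorem~\ref{thm:compare_p}: verify the CDF dominance condition~\eqref{eqs:equ:1_2} between the inter-adoption waiting times of the two networks, and then invoke Theorem~\ref{thm:CDF-dominance}. The present statement is in fact the special case of Lemma~\ref{lem:F^heter_k<F^hom_k_pq} in which $p_1=\cdots=p_M=p$; the positive-correlation hypothesis of that lemma is then trivially satisfied once we relabel the nodes so that $q_1\le q_2\le\cdots\le q_M$.

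Concretely, I would let $t_k^{\rm hom}$ and $t_k^{\rm het}$ denote the waiting time between the $(k-1)$th and the $k$th adoption in the homogeneous and heterogeneous networks, respectively, and $F_k^{\rm hom}$, $F_k^{\rm het}$ their CDFs. The total rate of the first adoption equals $Mp$ in both cases, so $F_1^{\rm het}\equiv F_1^{\rm hom}$. Applying Lemma~\ref{lem:F^heter_k<F^hom_k_pq} in the specialised form $p_i\equiv p$ yields the strict inequalities $F_k^{\rm het}(\tau)<F_k^{\rm hom}(\tau)$ for $k=2,\dots,M$ and $\tau>0$. Lemma~\ref{lem:t_i_independent} supplies the mutual independence of $\{t_k^{\rm hom}\}$ and of $\{t_k^{\rm het}\}$ required in~\eqref{eqs:equ:1_2}, so Theorem~\ref{thm:CDF-dominance} delivers $n^{\rm het}(t)<n^{\rm hom}(t)$ for $0<t<\infty$, and division by~$M$ via~\eqref{eq:number_to_fraction} gives the claimed inequality for~$f$.

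The underlying intuition, which makes the result plausible and explains why Lemma~\ref{lem:F^heter_k<F^hom_k_pq} is the right tool, is that under the mild-heterogeneity assumption~\eqref{eq:q_mild_het} the individual hazard of a nonadopter $j$ after $k-1$ adoptions is $p+\frac{q_j(k-1)}{M-1}$, so nodes with larger $q_j$ adopt earlier in expectation. Consequently, the residual nonadopters have below-average $q_j$, and the total residual adoption rate $\Lambda_k^{\rm het}=(M-k+1)p+\frac{k-1}{M-1}\sum_{j\ \text{nonadopter}} q_j$ is stochastically smaller than its deterministic homogeneous counterpart $\Lambda_k^{\rm hom}=(M-k+1)p+\frac{(k-1)(M-k+1)q}{M-1}$. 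Combined with the concavity of $\lambda\mapsto 1-e^{-\lambda\tau}$ (Jensen's inequality, conditional on the set of current adopters), this translates into the strict inequality $F_k^{\rm het}(\tau)<F_k^{\rm hom}(\tau)$.

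The main obstacle, for anyone trying to prove the theorem directly rather than by specialising Lemma~\ref{lem:F^heter_k<F^hom_k_pq}, would be making the above stochastic-dominance step rigorous: one has to show that the joint distribution of the $q$-values of the first $k-1$ adopters is size-biased upward relative to a uniform $(k-1)$-subset. The assumption~\eqref{eq:q_mild_het} is essential here, because without it the rate that a given nonadopter experiences depends on the identities of the current adopters and not just on their $q_j$-totals, and the clean Jensen step above need not go through — this is in line with the fact (previewed in Section~\ref{subsec:complete_q} and developed later in the paper) that for general (non-mild) heterogeneity in the $q_{i,j}$, the result can fail.
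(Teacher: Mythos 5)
Your proposal is correct and takes essentially the same route as the paper: the paper proves Theorem~\ref{thm:compare_q} precisely by specialising Lemma~\ref{lem:F^heter_k<F^hom_k_pq} to $p_1=\dots=p_M=p$ (your relabeling so that $q_1\le\dots\le q_M$ is exactly how the positive-correlation hypothesis is satisfied), invoking Lemma~\ref{lem:t_i_independent} for the independence required in~\eqref{eqs:equ:1_2}, and concluding via Theorem~\ref{thm:CDF-dominance} and~\eqref{eq:number_to_fraction}. Your supporting intuition also matches the mechanism in the paper's proof of the auxiliary lemma, namely strict concavity of $\lambda\mapsto 1-e^{-\lambda\tau}$ together with the Chebyshev sum inequality (Lemma~\ref{lem:iChebyshev}).
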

\begin{proof}
	Let $t_k^{\rm hom}$, $F^{\rm hom}_k(\tau)$, $t_k^{\rm het}$, and $F^{\rm het}_k(\tau)$ be defined as in the proof of Theorem~\ref{thm:compare_p}.
	By Lemma~\ref{lem:F^heter_k<F^hom_k_pq}, in the special case where $p_1=p_2=\dots p_M$, $F^{\rm het}_1(\tau)  \equiv  F^{\rm hom}_1(\tau)$ for $\tau\geq 0$, and
	\begin{equation*}
		F^{\rm het}_k(\tau)<F^{\rm hom}_k(\tau), \qquad k = 2, \dots, M, \qquad 0<\tau<\infty.
	\end{equation*}
	Therefore, the proof is the same as for Theorem~\ref{thm:compare_p}.
\end{proof}

\Remark We can also prove Theorem~\ref{thm:compare_q} for small times using the explicit expressions for $f'(0)$, $f''(0)$, and $f'''(0)$, see~\ref{app:small_time}.

\Remark
Theorem~\ref{thm:compare_q} does {\em not} extend to the case of a general heterogeneity $\{q_{i,j}\}$. For example, if $M=3$, then a complete network is also a two-sided circle, and so, by~\eqref{eqs:compare_Fibich_Gibori}, if $q^L\neq q^R$ and $q^L+q^R\equiv q$, then
\begin{equation*}
	f^{\rm het}(t; p, \{q_{i,j}\})\equiv f^{\rm 2-sided}_{\rm circle}(t;p,q^R,q^L,M=3)\equiv f^{\rm 2-sided}_{\rm circle}(t;p,\frac{q}{2},\frac{q}{2},M=3)\equiv f^{\rm hom}(t; p,q).
\end{equation*} 
{\newrev The question of whether $f^{\rm het} \le f^{\rm hom}$ when the heterogeneity in~$q$ is not mild, is currently open.}

In summary, the  results in this section show that
{\bf in complete networks, mild heterogeneity in $\{q_j\}$ always slows down the diffusion.}

\subsection{Heterogeneity in $p$ and $q$.}
\label{subsec:complete_pq}

So far, we saw that one-dimensional heterogeneity (i.e. just in $p$ or just in $q$) always slows down the adoption. We now show that this is not always the case when the network is heterogeneous in both $p$ and $q$.

\subsubsection{Positive correlation between $\{p_j\}$ and $\{q_j\}$.}

When  $\{p_j\}$ and $\{q_j\}$ are {\em positively correlated},
heterogeneity always slows down the adoption:
\begin{theorem}
	\label{thm:compare_pq}
	Consider a complete graph with $M$ nodes which are heterogeneous in $\{p_i\}_{i = 1}^M$ and in~$\{q_i\}_{i = 1}^M$, such that~\eqref{eq:q_mild_het} holds. Furthermore, assume that $\{p_i\}$ and $\{q_i\}$ are positively correlated, so that $p_1\leq p_2 \leq \dots \leq p_M$ and $q_1\leq q_2 \leq\dots \leq q_M$. Let $p =  \frac{1}{M} \sum_{i = 1}^M p_i$ and $q =  \frac{1}{M} \sum_{i = 1}^M q_i$. 
	Then 
	$$
	f^{\rm het}(t; \{p_i\}_{i=1}^M, \{q_i\}_{i=1}^M) < f^{\rm hom}(t; p,q,M), \qquad 0<t<\infty.
	$$
\end{theorem}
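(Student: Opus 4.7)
The plan is to proceed in direct parallel with the proofs of Theorem~\ref{thm:compare_p} and Theorem~\ref{thm:compare_q}, applying the CDF dominance condition (Theorem~\ref{thm:CDF-dominance}) to the inter-adoption times. Concretely, I would let $t_k^{\rm hom}$ and $t_k^{\rm het}$ denote the time between the $(k-1)$th and $k$th adoptions in the homogeneous and heterogeneous networks, respectively, and let $F_k^{\rm hom}(\tau)$ and $F_k^{\rm het}(\tau)$ denote their CDFs. The goal is then to check the two hypotheses of Theorem~\ref{thm:CDF-dominance}: pointwise CDF domination and independence of the inter-adoption times.

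Both hypotheses are already provided by the two auxiliary results stated in the proof of Theorem~\ref{thm:compare_p}. Crucially, Lemma~\ref{lem:F^heter_k<F^hom_k_pq} is stated in full generality with heterogeneity in both $\{p_i\}$ and $\{q_i\}$ under the positive correlation ordering $p_1\le\cdots\le p_M$ and $q_1\le\cdots\le q_M$, which is precisely the hypothesis of the present theorem. So one obtains $F_1^{\rm het}(\tau)\equiv F_1^{\rm hom}(\tau)$ (since the total adoption rate at $t=0$ is $\sum_i p_i = Mp$ in both networks) and $F_k^{\rm het}(\tau)<F_k^{\rm hom}(\tau)$ for $k=2,\dots,M$ and $\tau>0$. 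Lemma~\ref{lem:t_i_independent} then supplies independence of the $\{t_k^{\rm het}\}$ and of the $\{t_k^{\rm hom}\}$. Invoking Theorem~\ref{thm:CDF-dominance} (with the strict inequality at, say, $j=2$) yields $n^{\rm het}(t)<n^{\rm hom}(t)$ for $0<t<\infty$, and dividing by $M$ gives the claim via~\eqref{eq:number_to_fraction}.

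The substantive work is therefore not in the proof of this theorem itself but in Lemma~\ref{lem:F^heter_k<F^hom_k_pq}, whose proof is deferred to the appendix. The delicate point is precisely where the positive correlation hypothesis enters: conditional on the identity of the first $k-1$ adopters, the time $t_k^{\rm het}$ is exponential with random rate $R_k=\sum_{j\notin S}(p_j+\tfrac{q_j}{M-1}(k-1))$, where $S$ is the (random) set of earlier adopters. In the homogeneous case the analogous rate is deterministic, equal to $(M-k+1)(p+\tfrac{q(k-1)}{M-1})$. The intuition is that when $p_j$ and $q_j$ are comonotone, the nodes with highest combined adoption pressure are the ones most likely to adopt early, so the set of remaining non-adopters is biased toward small $p_j$ and small $q_j$. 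This downward bias on $\mathbb{E}[R_k]$, combined with the convexity of $\tau\mapsto e^{-R_k\tau}$ (Jensen's inequality), produces the required strict CDF domination for $k\ge 2$.

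The hard part, were one to prove Lemma~\ref{lem:F^heter_k<F^hom_k_pq} from scratch rather than citing it, would be making this "comonotone biased sampling" argument rigorous for arbitrary $k$, since one needs to handle the joint distribution of the adoption order and then average the exponential CDF over that distribution. A clean way to do this would be a coupling or rearrangement inequality argument exploiting the fact that for any fixed size-$(k-1)$ subset $S$, the conditional rate $R_k(S)$ is smaller in expectation than its value under a uniformly random $S$ precisely because of the positive correlation — and the uniform-$S$ expectation equals the homogeneous rate by the matching of averages~\eqref{eq:fair}. Granting Lemma~\ref{lem:F^heter_k<F^hom_k_pq} as stated, however, the proof of the theorem itself is only a few lines.
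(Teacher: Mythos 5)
Your proposal is correct and takes essentially the same route as the paper: the paper's proof of Theorem~\ref{thm:compare_pq} likewise just invokes Lemma~\ref{lem:F^heter_k<F^hom_k_pq} (which is indeed stated and proved in the appendix in exactly this positively-correlated generality, via the weights $w_k$, the Chebyshev sum inequality of Lemma~\ref{lem:iChebyshev}, and concavity of $1-e^{-\lambda t}$ in $\lambda$) together with Lemma~\ref{lem:t_i_independent}, and then applies Theorem~\ref{thm:CDF-dominance} exactly as in Theorem~\ref{thm:compare_p}. The only cosmetic slip is that in your sketch of the lemma the Jensen step rests on convexity of $\lambda \mapsto e^{-\lambda \tau}$ in the random rate, not on convexity in $\tau$; your comonotone-biased-sampling account of where the correlation hypothesis enters otherwise matches the paper's argument.
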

\begin{proof}
	Let $t_k^{\rm hom}$, $F^{\rm hom}_k(\tau)$, $t_k^{\rm het.}$, and $F^{\rm het.}_k(\tau)$ be defined as in the proof of Theorem~\ref{thm:compare_p}.
	In Lemma~\ref{lem:F^heter_k<F^hom_k_pq}, we prove the CDF dominance condition, $F^{\rm het}_1(\tau)\equiv F^{\rm hom}_1(\tau)$ for $\tau\geq 0$, and
	\begin{equation*}
		F^{\rm het}_k(\tau)<F^{\rm hom}_k(\tau), \qquad k = 2, \dots, M, \qquad 0<\tau<\infty.
	\end{equation*}
	The rest of the proof is the same as for Theorem~\ref{thm:compare_p}.	
\end{proof}

\Remark
We can prove Theorem~\ref{thm:compare_pq} for small times using the explicit expressions for $f'(0)$ and  $f''(0)$, see~\ref{app:small_time}.

\subsubsection{Non-positive correlation between $\{p_j\}$ and $\{q_j\}$.}

When $\{p_j\}$ and $\{q_j\}$ are {not positively correlated}, the effect of the heterogeneity can become more diverse, even for networks with $M=2$. In particular, {\em  $f^{\rm het}$ can be higher than $f^{\rm hom}$ for all times}:
\begin{lemma}
	\label{lem:compare_M=2pq}
	Let $B$ be a heterogeneous network with $M=2$, $\{p_1,p_2\}=\{2p,0\}$, and $\{q_1,q_2\}=\{0,2q\}$, and let
	$A$ be the corresponding homogeneous network with $M=2$, $p_1=p_2=p$, and $q_1=q_2=q$,  see Figure~\ref{fig:M=2pq_network}. Then for $0<t<\infty$,
	\begin{equation*}
		\begin{cases}
			f^{B}(t)>f^{A}(t), & \qquad {\rm if}\ q>p,\\
			f^{B}(t)=f^{A}(t), & \qquad {\rm if}\ q=p,\\
			f^{B}(t)<f^{A}(t), & \qquad {\rm if}\ q<p.
		\end{cases}
	\end{equation*}
\end{lemma}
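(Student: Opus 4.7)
The plan is to derive closed-form expressions for both $f^A(t)$ and $f^B(t)$ via the $M=2$ master equations in Lemma~\ref{lem:masterHeterofc}, and then show that their difference factors as a perfect square whose sign is controlled by $q-p$.

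For network $B$, the dynamics decouple in a transparent way: since $p_2=0$ and $q_{2,1}=0$, node~$1$ can only adopt by external influence at rate $2p$, while node~$2$ can adopt only after node~$1$ has, at rate $q_{1,2}=2q$. Solving \eqref{eqs:master_M=2} directly with the given parameters yields
\[ [S^1]_B(t) = e^{-2pt}, \qquad [S^2]_B(t) = \frac{p\, e^{-2qt} - q\, e^{-2pt}}{p-q}, \]
and hence $f^B(t) = 1 - \tfrac{1}{2}\bigl([S^1]_B(t) + [S^2]_B(t)\bigr)$. For network $A$, by symmetry $[S^1]_A = [S^2]_A$, and solving \eqref{eqs:master_M=2} with $p_j\equiv p$ and $q_{i,j}\equiv q$ gives
\[ f^A(t) = 1 - \frac{p\, e^{-(p+q)t} - q\, e^{-2pt}}{p-q}. \]

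Subtracting the two expressions, combining exponentials over the common denominator $2(p-q)$, and using the identity $e^{-2pt}-2e^{-(p+q)t}+e^{-2qt} = \bigl(e^{-pt}-e^{-qt}\bigr)^2$, I expect the difference to collapse to
\[ f^B(t) - f^A(t) \;=\; \frac{p}{2(q-p)}\bigl(e^{-pt}-e^{-qt}\bigr)^2. \]
The square is strictly positive for $t>0$ whenever $p\neq q$, so the sign of the difference equals the sign of $q-p$, which immediately gives the first and third cases of the lemma.

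For the borderline case $q=p$, both $f^A$ and $f^B$ are continuous in the parameters (being expectations of bounded functions), so taking $q\to p$ in the factored formula (a short Taylor expansion $e^{-qt}-e^{-pt} = O(q-p)$ kills the simple pole in $\tfrac{1}{q-p}$) yields $f^B(t)\equiv f^A(t)$. The only real obstacle is the algebraic bookkeeping needed to massage the two explicit formulas into the perfect-square form, but once the factorization is spotted the proof is essentially a one-line check.
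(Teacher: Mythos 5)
Your proposal is correct and follows essentially the same route as the paper's own proof: both solve the $M=2$ master equations explicitly (your formulas for $[S^1]_B$, $[S^2]_B$, and $f^A$ all match the paper's), and both reduce the claim to the sign of $e^{-2pt}+e^{-2qt}-2e^{-(p+q)t}$, which the paper establishes via strict convexity of the exponential and you via the algebraically equivalent perfect-square identity $\bigl(e^{-pt}-e^{-qt}\bigr)^2$, with the borderline case $q=p$ handled by continuity in both arguments. Your factored form $f^B(t)-f^A(t)=\frac{p}{2(q-p)}\bigl(e^{-pt}-e^{-qt}\bigr)^2$ checks out exactly against the paper's expression for $g(t)$.
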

\begin{proof}
	This follows from the master eqs.~\eqref{eqs:master_M=2} for $M=2$, see~\ref{app:lem:compare_M=2pq}.	
\end{proof}

\Remark
We can also prove Lemma~\ref{lem:compare_M=2pq} for small times using the explicit expressions for $f'(0)$ and $f''(0)$, see~\ref{app:small_time}.

\begin{figure}[ht!]
	\centering
	\begin{tikzpicture}
		\draw (-5,-1) rectangle (-.4, 2);
		\draw (5,-1) rectangle (.4,2);
		
		\draw (-4,0) circle(0.6cm) node {p};
		\draw (-1.4, 0) circle(0.6cm) node {p};
		
		\draw (1.4,0) circle(0.6cm) node {$p_1=2p$};
		\draw (4, 0) circle(0.6cm) node {$p_2=0$};
		
		\draw [->, thick] (-3.3, .25)--(-2.1,.25);
		\draw [->, thick] (-2.1,-.25)--(-3.3, -.25);
		
		\draw [->, thick] (2.05,0)--(3.35,0);
		
		\draw (-2.7,.5) node{q};
		\draw (-2.7,-.5) node{q};
		
		\draw (2.7,.5) node{2q};
		
		\draw (-4.3, 1.5) node{A};
		\draw (1.1,1.5) node{B};

	\end{tikzpicture}  
	\caption{Networks used in Lemma~\ref{lem:compare_M=2pq}. A)~Homogeneous. B)~Heterogeneous. }    
	\label{fig:M=2pq_network}                   
\end{figure}
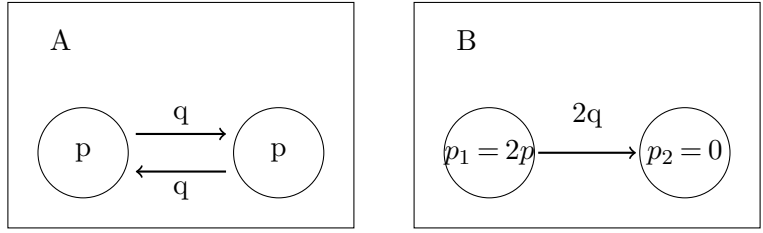

\begin{figure}[ht!]
	\centering
	\begin{minipage}{.3\textwidth}
		\centering
		\begin{tikzpicture}
			\node[anchor=south west, inner sep =0] at (0,0) {\includegraphics[width=\textwidth]{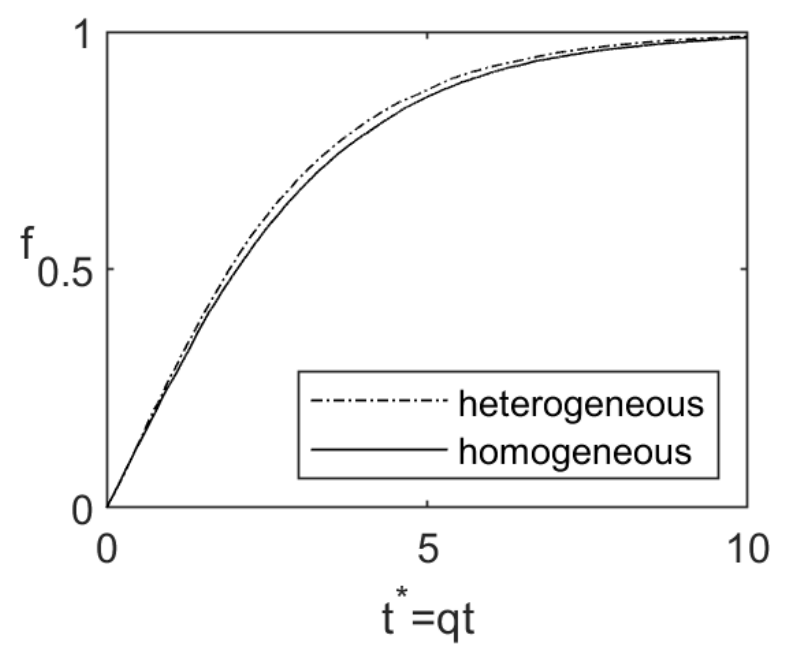}};
			\draw (4,2.5) node{A};
		\end{tikzpicture}
	\end{minipage}
	\begin{minipage}{.3\textwidth}
		\centering
		\begin{tikzpicture}
			\node[anchor=south west, inner sep =0] at (0,0) {\includegraphics[width=\textwidth]{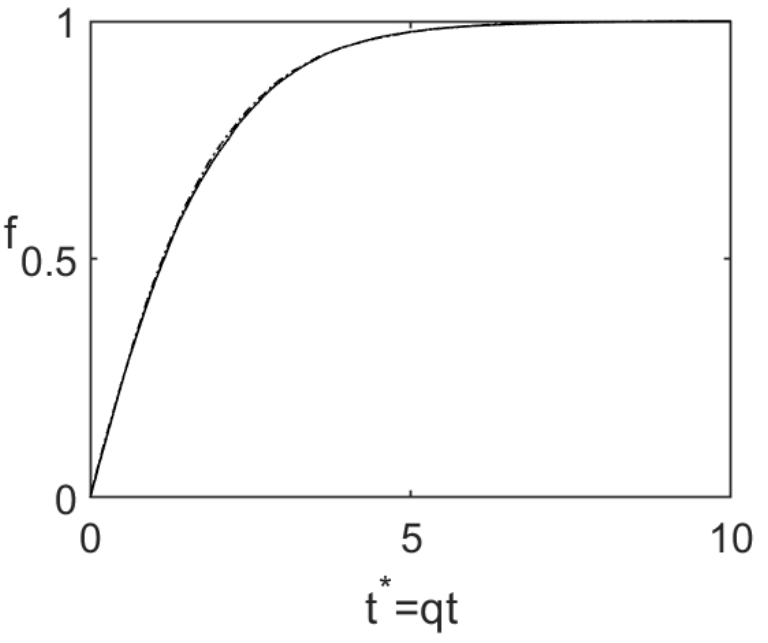}};
			\draw (4,2.5) node{B};
		\end{tikzpicture}
	\end{minipage}
	\begin{minipage}{.3\textwidth}
		\centering
		\begin{tikzpicture}
			\node[anchor=south west, inner sep =0] at (0,0) {\includegraphics[width=\textwidth]{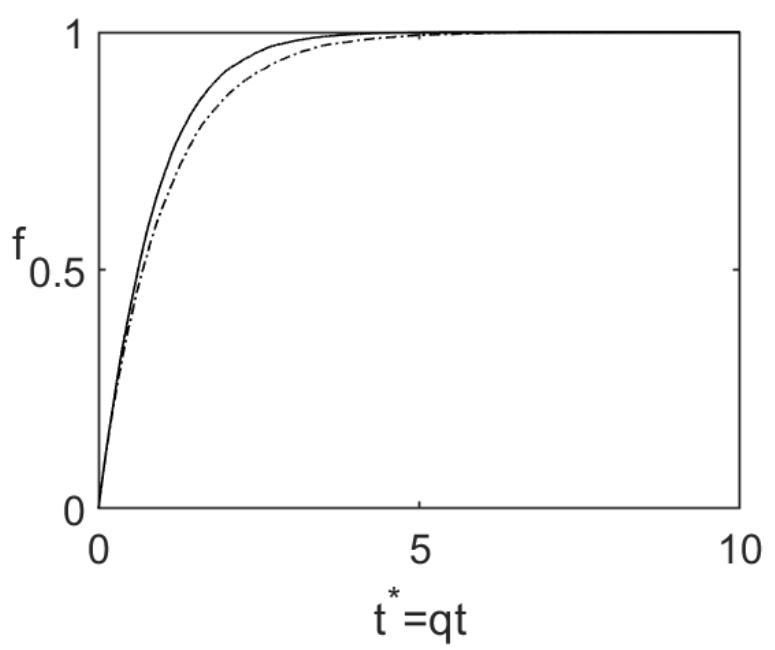}};
			\draw (4,2.5) node{C};
		\end{tikzpicture}
	\end{minipage}
	\caption{Fractional adoption in the homogeneous (solid) and heterogeneous (dash-dot) networks shown in Figure~\ref{fig:M=2pq_network}. Plot shows the average of $10^4$   simulations of~\eqref{eq:general_model}. A)~$p=\frac{q}{2}$. B)~$p=q$. C)~$p=2q$.} 
	\label{fig:M=2pq}
\end{figure}

Lemma~\ref{lem:compare_M=2pq} is confirmed in Figure~\ref{fig:M=2pq} using   simulations of~\eqref{eq:general_model}.
The intuition behind this result is as follows. In both networks, the first adoption occurs at the rate of $2p$. The second adoption occurs at a rate of $2q$ in the heterogeneous case, and at the rate of $p+q$ in the homogeneous case. Therefore, by Theorem~\ref{thm:CDF-dominance},  $f^{\rm het}(t)$ is (globally in-time) greater than, equal to, or less than $f^{\rm hom}(t)$, if $q$ is greater than, equal to, or less than $p$, respectively. We can use this intuition 
to show that for any $M$, $f^{\rm het}(t)$ can be (globally in-time) greater than, equal to, or less than $f^{\rm hom}(t)$:
\begin{lemma}
	\label{lem:het>hom}
	Consider two complete networks with M nodes: a homogeneous network with $p_i=p$ and $q_{i,j}=\frac{q}{M-1}$ for all nodes, and a heterogeneous network with $p_1=Mp$, $p_2=p_3=\dots=p_M=0$, $q_{1,2}=q_{1,3}=\dots=q_{1,M}=\frac{2q}{M-1}$, $q_{2,1}=q_{3,1}=\dots=q_{M,1}=0$, and $q_{i,j}=\frac{q}{M-1}$ when $i\neq1$ and $j\neq1$,  see Figure~\ref{fig:M_general_pq_network}. Then for $0<t<\infty$,
	\begin{equation*}
		\begin{cases}
			f^{\rm het}(t)>f^{\rm hom}(t), & \qquad {\rm if}\ \frac{q}{M-1}>p,\\
			f^{\rm het}(t)=f^{\rm hom}(t), & \qquad {\rm if}\ \frac{q}{M-1}=p,\\
			f^{\rm het}(t)<f^{\rm hom}(t), & \qquad {\rm if}\ \frac{q}{M-1}<p.
		\end{cases}
	\end{equation*}
\end{lemma}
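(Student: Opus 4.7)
The plan is to apply the CDF dominance condition (Theorem~\ref{thm:CDF-dominance}) to compare the inter-adoption times $\{t_k\}$ in the two networks, exactly as in the proofs of Theorems~\ref{thm:compare_p}--\ref{thm:compare_pq}. The key structural observation is that in this heterogeneous network, node~$1$ is the \emph{unique} node with positive external rate ($p_1=Mp$, all other $p_j=0$), so the first adoption is deterministically by node~$1$. Once node~$1$ has adopted, the remaining nodes $\{2,\dots,M\}$ are mutually interchangeable: each receives internal influence $\tfrac{2q}{M-1}$ from node~$1$ and $\tfrac{q}{M-1}$ from every subsequent adopter. Consequently, the rate of the $k$th adoption depends only on $k$, not on which specific nodes in $\{2,\dots,M\}$ have already adopted, so (as in Lemma~\ref{lem:t_i_independent}) the times $\{t_k^{\rm het}\}$ are independent exponentials; the same holds for $\{t_k^{\rm hom}\}$ by the full symmetry of the homogeneous network.

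I would then compute these rates explicitly. For $k=1$ both total rates equal $Mp$. For $2\le k\le M$, when $k-1$ nodes have adopted, the homogeneous rate is
\begin{equation*}
\lambda_k^{\rm hom}=(M-k+1)\Bigl(p+\tfrac{(k-1)q}{M-1}\Bigr),
\end{equation*}
while in the heterogeneous case node~$1$ is among the $k-1$ adopters, contributing $\tfrac{2q}{M-1}$ of internal influence to each of the $M-k+1$ remaining nodes, plus $\tfrac{q}{M-1}$ from each of the other $k-2$ adopters, yielding
\begin{equation*}
\lambda_k^{\rm het}=(M-k+1)\cdot\tfrac{kq}{M-1}.
\end{equation*}
Subtracting gives the clean identity
\begin{equation*}
\lambda_k^{\rm het}-\lambda_k^{\rm hom}=(M-k+1)\Bigl(\tfrac{q}{M-1}-p\Bigr), \qquad k=2,\dots,M,
\end{equation*}
whose sign depends on $\tfrac{q}{M-1}-p$ alone, independent of $k$.

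Since the CDF of an exponential random variable is pointwise monotonically increasing in its rate, the CDF dominance condition~\eqref{eqs:equ:1_2} holds in the appropriate direction, with strict inequality at $k=2$ for $\tau>0$ whenever $\tfrac{q}{M-1}\ne p$. Theorem~\ref{thm:CDF-dominance} (applied with the slower network as $A$) then yields $f^{\rm het}>f^{\rm hom}$ when $\tfrac{q}{M-1}>p$ and $f^{\rm het}<f^{\rm hom}$ when $\tfrac{q}{M-1}<p$, via~\eqref{eq:number_to_fraction}. The equality case is immediate: if $\tfrac{q}{M-1}=p$ then $\lambda_k^{\rm het}=\lambda_k^{\rm hom}$ for every $k$, so the joint distributions of $(t_1,\dots,t_M)$ coincide, and hence so do $f^{\rm het}$ and $f^{\rm hom}$. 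The only subtle point I anticipate is verifying the state-independence of the heterogeneous rates, which hinges on node~$1$ being the first adopter with probability~$1$ --- a direct consequence of the particular choice $p_j=0$ for $j\neq 1$ --- and on the fact that this single deterministic first step preserves the full symmetry of the remaining $M-1$ nodes throughout the subsequent evolution.
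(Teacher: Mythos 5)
Your proposal is correct and takes essentially the same approach as the paper's proof, which likewise computes the rates of consecutive adoptions ($Mp$ for the first in both networks, then $k(M-k+1)\frac{q}{M-1}$ heterogeneous versus $(M-k+1)\bigl(p+(k-1)\frac{q}{M-1}\bigr)$ homogeneous) and invokes Theorem~\ref{thm:CDF-dominance}. The details you add --- node~$1$ being deterministically the first adopter, the resulting exchangeability and independence of the inter-adoption times, and the identity $\lambda_k^{\rm het}-\lambda_k^{\rm hom}=(M-k+1)\bigl(\frac{q}{M-1}-p\bigr)$ --- are correct and simply make explicit what the paper's terse argument leaves implicit.
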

\begin{proof}
	The first adoption in both networks occurs at the rate of $Mp$. The second adoption occurs at a rate of $2q$ in the heterogeneous case and $(M-1)p+q$ in the homogeneous case. The $n$th adoption occurs at a rate of $n(M-n+1)\frac{q}{M-1}$ in the heterogeneous case and $(M-n+1)(p+(n-1)\frac{q}{M-1})$ in the homogeneous case. Hence, the result follows from Theorem~\ref{thm:CDF-dominance}.
\end{proof}






%
%
%
%
%
\begin{figure}
	\centering
	\scalebox{.7}[.7]{
		\centering
		\begin{minipage}{.6\textwidth}
			\centering
			\begin{tikzpicture}
				\draw (-6.2,-3.4) rectangle (-.8,2.7);
				\draw (-3.5,2) circle (0.5cm) node {p};
				\draw[<->,thick] (-3.5,1.5)--(-3.5,-2.3);
				\draw[<->,thick] (-4,1.8)--(-4.45,1.55);
				\draw[<->,thick] (-3.75,1.6)--(-4.8,-1.7);
				\draw[<->,thick] (-3.25,1.6)--(-2.2,-1.7);
				\draw (-5,1.3) circle (0.5cm) node {p};
				\draw[<->,thick] (-4.6,0.95)--(-2.4,-1.75);
				\draw[<->,thick] (-4.5,1.3)--(-2.5,1.3);
				\draw[<->,thick] (-5.3,0.8)--(-5.45,0.3);
				\draw[<->,thick] (-5,0.8)--(-5,-1.6);
				\draw[black,fill=black] (-5.5,0.1) circle (.3ex);
				\draw[black,fill=black] (-5.6,-0.4) circle (.3ex);
				\draw[black,fill=black] (-5.5,-0.9) circle (.3ex);
				\draw[<->,thick] (-5.45,-1.1)--(-5.3,-1.6);
				\draw (-5,-2.1) circle (0.5cm) node {p};
				\draw[<->,thick] (-4.45,-2.35)--(-4,-2.6);
				\draw (-3.5,-2.8) circle (0.5cm) node {p};
				\draw[<->,thick] (-3,-2.6)--(-2.55,-2.35);
				\draw[<->,thick] (-3.1,-2.45)--(-2.2,0.86);
				\draw[<->,thick] (-3.9,-2.45)--(-4.8,0.86);
				\draw (-2,-2.1) circle (0.5cm) node {p};
				\draw[<->,thick] (-4.5,-2.1)--(-2.5,-2.1);
				\draw (-2,1.3) circle (0.5cm) node {p};
				\draw[<->,thick] (-2.4,0.95)--(-4.6,-1.8);
				\draw[<->,thick] (-2,0.8)--(-2,-1.6);
				\draw[black,fill=black] (-1.5,0.1) circle (.3ex);
				\draw[black,fill=black] (-1.4,-0.4) circle (.3ex);
				\draw[black,fill=black] (-1.5,-0.9) circle (.3ex);
				\draw[<->,thick] (-1.7,-1.6)--(-1.55,-1.1);
				\draw[<->,thick] (-1.55,0.3)--(-1.7,0.8);
				\draw[<->,thick] (-2.55,1.55)--(-3,1.8);
				
				\draw (-5.5,2.2) node {A};
				
			\end{tikzpicture}
		\end{minipage}
		\begin{minipage}{.6\textwidth}
			\centering
			\begin{tikzpicture}
				\draw (-6.2,-3.4) rectangle (-.8,2.7);
				\draw (-3.5,2) circle (0.5cm) node {$Mp$};
				\draw[->,dashed] (-3.5,1.5)--(-3.5,-2.3);
				\draw[->,dashed] (-4,1.8)--(-4.45,1.55);
				\draw[->,dashed] (-3.75,1.6)--(-4.75,-1.65);
				\draw[->,dashed] (-3.25,1.6)--(-2.17,-1.62);
				\draw (-5,1.3) circle (0.5cm) node {$0$};
				\draw[<->,thick] (-4.6,0.95)--(-2.4,-1.75);
				\draw[<->,thick] (-4.5,1.3)--(-2.5,1.3);
				\draw[<->,thick] (-5.3,0.8)--(-5.45,0.3);
				\draw[<->,thick] (-5,0.8)--(-5,-1.6);
				\draw[black,fill=black] (-5.5,0.1) circle (.3ex);
				\draw[black,fill=black] (-5.6,-0.4) circle (.3ex);
				\draw[black,fill=black] (-5.5,-0.9) circle (.3ex);
				\draw[<->,thick] (-5.45,-1.1)--(-5.3,-1.6);
				\draw (-5,-2.1) circle (0.5cm) node {$0$};
				\draw[<->,thick] (-4.45,-2.35)--(-4,-2.6);
				\draw (-3.5,-2.8) circle (0.5cm) node {$0$};
				\draw[<->,thick] (-3,-2.6)--(-2.55,-2.35);
				\draw[<->,thick] (-3.1,-2.45)--(-2.2,0.86);
				\draw[<->,thick] (-3.9,-2.45)--(-4.8,0.86);
				\draw (-2,-2.1) circle (0.5cm) node {$0$};
				\draw[<->,thick] (-4.5,-2.1)--(-2.5,-2.1);
				\draw (-2,1.3) circle (0.5cm) node {$0$};
				\draw[<->,thick] (-2.4,0.95)--(-4.6,-1.8);
				\draw[<->,thick] (-2,0.8)--(-2,-1.6);
				\draw[black,fill=black] (-1.5,0.1) circle (.3ex);
				\draw[black,fill=black] (-1.4,-0.4) circle (.3ex);
				\draw[black,fill=black] (-1.5,-0.9) circle (.3ex);
				\draw[<->,thick] (-1.7,-1.6)--(-1.55,-1.1);
				\draw[<->,thick] (-1.55,0.3)--(-1.7,0.8);
				\draw[<-,dashed] (-2.55,1.55)--(-3,1.8);
				
				\draw (-5.5,2.2) node {B};
				
			\end{tikzpicture}
		\end{minipage}
	}
	\caption{Networks used in Lemma~\ref{lem:het>hom}. A solid edge indicates an internal influence of $\frac{q}{M-1}$, and a dashed edge indicates an internal influence of $\frac{2q}{M-1}$. A)~Homogeneous network. B)~Heterogeneous network.}
	\label{fig:M_general_pq_network}
\end{figure}
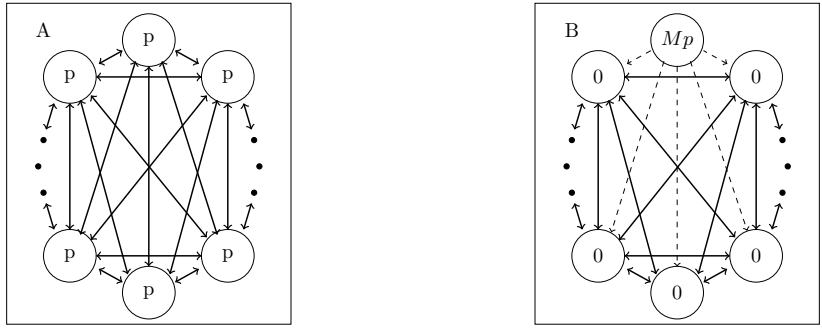

\section{Loss of global dominance under ``additive transformations''.}
\label{subsec:abc_lemma}

Let $A$ and $B$ be two networks with $M$ nodes such that the expected adoption in~$A$ is slower than in~$B$ for all times, i.e., 
$$
f_A(t)<f_B(t), \qquad 0< t<\infty.
$$
Consider the following two  ``additive transformations'' of these networks:
\begin{enumerate}
	\item[{\bf (T1):}]
	$(A,B) \longrightarrow (A',B')$, where $A'$ and $B'$ are obtained by adding $\varDelta p$ to all nodes in both networks, i.e., 
	\begin{equation}
		\label{eq:add_delta_p}
		p_j^{A'}=p_j^{A}+\varDelta p, \qquad p_j^{B'}=p_j^{B}+\varDelta p,\qquad j=1,\dots,M.
	\end{equation}
	\item[{\bf (T2):}]
	$(A,B) \longrightarrow (A'',B'')$, where $A''$ and $B''$  are obtained by adding an identical $(M+1)$ node to $A$ and $B$, so that 
	\begin{subequations}
		\label{eq:abc_condition}
		\begin{align}
			p_{M+1}^{A''}&=p_{M+1}^{B''}=p_{M+1},\\ q_{i,M+1}^{A''}&=q_{i,M+1}^{B''}=q_{M+1}^{\rm in}, \qquad q_{M+1,i}^{A''}=q_{M+1,i}^{B''}=q_{M+1}^{\rm out},\qquad i=1,\dots,M.
		\end{align}
	\end{subequations}
\end{enumerate}

It is natural to ask:\footnote{One motivation for asking this question is given in Section~\ref{subsec:complete_p}.}
\par
\Question {\it Is the global dominance between  two networks  preserved under the  ``additive'' transformations~\eqref{eq:add_delta_p} and~\eqref{eq:abc_condition}, i.e., is it true that  $f_{A'}(t)<f_{B'}(t)$ and $f_{A''}(t)<f_{B''}(t)$ for  $0< t<\infty$? }

In some cases, global dominance is indeed preserved:
\begin{lemma}
	Let $A \prec B$ (see Definition~\ref{def:dominance}). Then
	$$
	f_{A}(t) < f_{B}(t), \qquad 0< t<\infty,
	$$
	and for any $\Delta p$, $p_{M+1}$, $q_{M+1}^{\rm in}$, and $q_{M+1}^{\rm out}$,  
	$$
	f_{A'}(t)<f_{B'}(t), \quad 
	f_{A''}(t)<f_{B''}(t), \qquad 0< t<\infty.
	$$	
\end{lemma}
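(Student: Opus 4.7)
The plan is to reduce all three claims to a single application of the Dominance Principle (Lemma~\ref{lem:dominance-principle}), by verifying that the strict node-wise/edge-wise dominance $A \prec B$ is preserved under each of the two additive transformations.

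For the first assertion, the statement $f_A(t) < f_B(t)$ for $0 < t < \infty$ is immediate from Lemma~\ref{lem:dominance-principle} applied to the hypothesis $A \prec B$.

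For the transformation $(A,B) \longrightarrow (A',B')$ defined by~\eqref{eq:add_delta_p}, I would simply check the inequalities of Definition~\ref{def:dominance} for the transformed networks: for each $j$,
\[
p_j^{A'} = p_j^A + \Delta p \leq p_j^B + \Delta p = p_j^{B'},
\]
while $q_{i,j}^{A'} = q_{i,j}^A \leq q_{i,j}^B = q_{i,j}^{B'}$ since the edge weights are untouched. Hence $A' \preceq B'$. The strict inequality witnessing $A \prec B$ is preserved: if it was one of the $p$-inequalities, translating both sides by $\Delta p$ keeps it strict; if it was one of the $q$-inequalities, the weights are unchanged and so it remains strict. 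Therefore $A' \prec B'$, and Lemma~\ref{lem:dominance-principle} yields $f_{A'}(t) < f_{B'}(t)$ for $0 < t < \infty$.

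For the transformation $(A,B) \longrightarrow (A'',B'')$ defined by~\eqref{eq:abc_condition}, the argument is analogous. For the original $M$ nodes, the $p_j$ and $q_{i,j}$ values are unchanged, so the inequalities $p_j^{A''} \le p_j^{B''}$ and $q_{i,j}^{A''} \le q_{i,j}^{B''}$ hold with the same strictness as in $A \prec B$. For the new node $M+1$, the condition~\eqref{eq:abc_condition} forces $p_{M+1}^{A''} = p_{M+1}^{B''}$ and $q_{i,M+1}^{A''} = q_{i,M+1}^{B''}$, $q_{M+1,i}^{A''} = q_{M+1,i}^{B''}$, so these new inequalities hold as equalities. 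Thus $A'' \preceq B''$ with at least the original strict inequality still strict, giving $A'' \prec B''$. One final application of Lemma~\ref{lem:dominance-principle} on the enlarged networks yields $f_{A''}(t) < f_{B''}(t)$ for $0 < t < \infty$.

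There is no real obstacle: everything reduces to bookkeeping on the weights, since both transformations act either identically on both sides or shift the same parameter in both sides by the same amount, and both preserve the partial order $\prec$. The subtlety of the paper (as foreshadowed in Section~\ref{subsec:abc_lemma}) lies in what happens when one only has global-in-time dominance $f_A < f_B$ without the pointwise relation $A \prec B$; that failure mode is addressed by other results, not this lemma.
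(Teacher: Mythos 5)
Your proposal is correct and follows exactly the paper's route: the paper's own proof consists of the single observation that $A \prec B$ implies $A' \prec B'$ and $A'' \prec B''$, followed by an appeal to the dominance principle (Lemma~\ref{lem:dominance-principle}). Your version merely spells out the weight-by-weight bookkeeping (preservation of each inequality and of at least one strict one) that the paper leaves implicit.
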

\begin{proof}
	if  $A \prec B$. Then  $A' \prec B'$ and  $A'' \prec B''$. Hence, the result follows 
	from the {\em dominance principle}  (Lemma~\ref{lem:dominance-principle}).	
\end{proof}

This lemma may seem to suggest that global dominance is indeed preserved under the  ``additive'' transformations~\eqref{eq:add_delta_p} and~\eqref{eq:abc_condition}.
This, however, is not always the case. Indeed, global dominance can be lost under a uniform addition of $\Delta p$ to all nodes {\bf (T1)}, {or under the addition of an identical node {\bf (T2)}}: 
\begin{lemma}
	\label{lem:ABC_counter_2}
	There exist two networks $A$ and $B$ of size $M$ such that $f_A(t)<f_B(t)$ for $0< t<\infty$, but
	\begin{subequations}
		\label{eqs:flips}
		\begin{equation}
			\label{eq:flips2}
			f^{A'}(t)> f^{B'}(t), \qquad  t \gg 1,
		\end{equation}  
		and
		\begin{equation}
			\label{eq:flips_t2}
			f^{A''}(t)>f^{B''}(t), \qquad  t \gg 1.
		\end{equation}
	\end{subequations}
\end{lemma}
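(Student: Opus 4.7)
The strategy is to exhibit concrete two-node networks for which the global dominance $f_A<f_B$ is already furnished by Lemma~\ref{lem:compare_M=2pq}, and then verify that each of the two additive transformations reverses the slowest asymptotic decay rate of $1-f(t)$, which is enough to produce the flip~\eqref{eqs:flips} at large~$t$. Fix parameters $q>p>0$, let $A$ be the homogeneous $M=2$ network with $p_1=p_2=p$, $q_{1,2}=q_{2,1}=q$, and let $B$ be the heterogeneous $M=2$ network with $\{p_1,p_2\}=\{2p,0\}$ and $\{q_{1,2},q_{2,1}\}=\{2q,0\}$. Lemma~\ref{lem:compare_M=2pq} (case $q>p$) immediately gives $f_A(t)<f_B(t)$ for all $0<t<\infty$.

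For (T1), pick any $\Delta p\in(0,q-p)$ and apply the explicit solution~\eqref{eqs:fc_M=2}. A short calculation yields
\[
1-f_{A'}(t)=\frac{q\,e^{-2(p+\Delta p)t}-(p+\Delta p)\,e^{-(p+\Delta p+q)t}}{q-p-\Delta p}, \qquad 1-f_{B'}(t)=\tfrac{1}{2}\,e^{-(2p+\Delta p)t}+(\text{faster-decaying terms}).
\]
Hence $1-f_{A'}(t)\sim \frac{q}{q-p-\Delta p}\,e^{-2(p+\Delta p)t}$ and $1-f_{B'}(t)\sim \tfrac{1}{2}\,e^{-(2p+\Delta p)t}$ as $t\to\infty$. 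Since $2p+\Delta p<2(p+\Delta p)$, $1-f_{B'}$ decays strictly slower than $1-f_{A'}$, so $f_{A'}(t)>f_{B'}(t)$ for all $t\gg 1$, establishing~\eqref{eq:flips2}.

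For (T2), extend $A$ and $B$ to three-node networks by attaching an identical new node with external rate $p_3=P$ large and symmetric couplings $q_{3,1}=q_{3,2}=Q$, $q_{1,3}=q_{2,3}=R$, chosen so that $Q>q-p$ and $R>0$. By~\eqref{eq:master_sol}, $1-f_{A''}(t)$ and $1-f_{B''}(t)$ are finite sums of exponentials whose rates are of the form $\sum_{i=1}^{n}p_{m_i}+\sum_{j}\sum_i q_{l_j,m_i}$. A direct enumeration of the $2^3-1$ such rates shows that, for $P$ sufficiently large, the slowest rate of $1-f_{A''}$ is $p_1+q_1=p+q+Q$, while the slowest rate of $1-f_{B''}$ is $p_1+q_1=2p+Q$. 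Since $q>p$, we have $p+q+Q>2p+Q$, so $1-f_{B''}$ decays strictly slower than $1-f_{A''}$, yielding $f_{A''}(t)>f_{B''}(t)$ for $t\gg 1$, which is~\eqref{eq:flips_t2}.

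The main technical obstacle is ensuring that the coefficient on the slowest exponential is strictly positive in each asymptotic expansion, since otherwise the rate comparison would not translate to an inequality on $f$. For $M=2$ this is read off directly from~\eqref{eqs:fc_M=2}, as shown above. For $M=3$ one can either verify it by a somewhat tedious computation from~\eqref{eqs:fc_M=3}, or argue probabilistically: each $[S^k](t)=\Pr(X_k(t)=0)$ is strictly positive for all $t>0$, and is a finite sum of decaying exponentials with distinct rates; the coefficient on the slowest rate must therefore be strictly positive, for otherwise $[S^k]$ would become negative as $t\to\infty$. Combined with $1-f=\frac{1}{M}\sum_k[S^k]$ and the strict separation of the slowest rates in our parameter regime, this delivers both flips in~\eqref{eqs:flips}.
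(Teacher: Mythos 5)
Your proof is correct in substance, and it diverges from the paper's in an interesting way on the second transformation. For the base pair and {\bf (T1)} you follow essentially the paper's path: same networks $A$ (homogeneous) and $B$ ($\{2p,0\}$, $\{0,2q\}$) with $q>p$, same appeal to Lemma~\ref{lem:compare_M=2pq}, same explicit formulas~\eqref{eqs:fc_M=2} — except that the paper takes $\Delta p>q-p$, so that the slowest rate of $1-f_{A'}$ is $p+\Delta p+q$, whereas you take $\Delta p\in(0,q-p)$, making it $2(p+\Delta p)$; both exceed $2p+\Delta p$, so both regimes yield~\eqref{eq:flips2} (indeed any $\Delta p>0$ with $\Delta p\neq q-p$ works). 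For {\bf (T2)} your route is genuinely different: the paper adds a node with $p_{M+1}=\infty$ and $q_{M+1}^{\rm out}\equiv\Delta p$, observes that $(A'',B'')$ then coincide with $(A',B')$, and extends to large finite $p_{M+1}$ ``by continuity'' — a step that is itself delicate, since transferring a statement about $t\gg1$ requires uniformity in $t$. You instead keep $P$ finite from the start and compare the slowest decay rates $p+q+Q$ (for $1-f_{A''}$, using $Q>q-p$ to beat $2p+2Q$, and $P$ large to push all rates involving node 3 out of the way) against $2p+Q$ (for $1-f_{B''}$), read off from~\eqref{eq:master_sol}. Your rate enumeration is correct, and your approach buys an explicit, self-contained asymptotic argument at the cost of having to control a leading coefficient.

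That coefficient is where your shortcut has a real (though easily repaired) hole. The probabilistic argument — ``$[S^k](t)>0$, so the coefficient on the slowest rate must be strictly positive'' — only rules out a \emph{negative} leading coefficient; it cannot exclude that the coefficient of $e^{-(2p+Q)t}$ in $1-f_{B''}$ is \emph{zero}, in which case $1-f_{B''}$ would actually decay at rate $\min(2q+Q,\,2p+2Q)$, and under your own hypothesis $Q>q-p$ both of these exceed $p+q+Q$, so the flip~\eqref{eq:flips_t2} would fail (the dominance would go the other way). The nonvanishing must therefore be checked, and fortunately it is a one-line computation rather than the ``tedious'' appeal to~\eqref{eqs:fc_M=3} (whose denominators degenerate here anyway, since $p_2=0$): on the event that nodes $1$ and $3$ are both nonadopters in $B''$, node $2$ has zero adoption hazard ($p_2=q_{3,2}X_3=q_{1,2}X_1=0$), so $[S^1,S^3](t)=e^{-(2p+P)t}$, and the master equation $\frac{d}{dt}[S^1]=-(2p+Q)[S^1]+Q[S^1,S^3]$ gives
\begin{equation*}
[S^1]^{B''}(t)=\frac{P}{P-Q}\,e^{-(2p+Q)t}-\frac{Q}{P-Q}\,e^{-(2p+P)t},
\end{equation*}
whose leading coefficient $\frac{P}{P-Q}$ is strictly positive for $P>Q$. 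Hence $1-f_{B''}\geq\frac13[S^1]^{B''}\sim\frac{P}{3(P-Q)}e^{-(2p+Q)t}$, while for $1-f_{A''}$ no coefficient check is needed at all, since an upper bound suffices: every rate appearing there is at least $p+q+Q>2p+Q$ (for $P$ large; possible resonances only introduce polynomial factors, which do not affect the comparison). With this patch, your argument is complete.
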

\begin{proof}
	Let $B$ be a heterogeneous network with $M=2$, $\{p_1,p_2\}=\{2p,0\}$, and $\{q_1,q_2\}=\{0,2q\}$, let
	$A$ be the corresponding homogeneous network with $p_1=p_2=p$, and $q_1=q_2=q$.
	and let $p<q$. Then by Lemma~\ref{lem:compare_M=2pq}, $f^A(t)<f^B(t)$ for $0<t<\infty$,
	see Figure~\ref{fig:ABC_counter}A.

	
	Let $A'$ and $B'$ 
	be the networks obtained from~$A$ and~$B$ 
	when we increase all the $\{p_j\}$s by $\varDelta p$, see~\eqref{eq:add_delta_p}.
	By~\eqref{eqs:fc_M=2},
	$		f^{A'}(t)=1-\frac{p+\varDelta p}{p-q+\varDelta p}e^{-(p+\varDelta p+q)t}+\frac{q}{p-q+\varDelta p}e^{-(2p+2\varDelta p)t},
	$
	and
	$
	f^{B'}=1-\frac{1}{2}\bigg[e^{-(2p+\varDelta p)t}+\frac{2p+\varDelta p}{2p-2q+\varDelta p}e^{-(2q+\varDelta p)t}-\frac{2q}{2p-2q+\varDelta p}e^{-(2p+2\varDelta p)t}\bigg].
	$
	Let $\Delta p>q-p$. Then for $t\gg 1$,
	$f^{A'}(t)\approx
	1-\frac{p+\varDelta p}{p-q+\varDelta p}e^{-(p+\varDelta p+q)t}$ and $f^{B'}\approx1-\frac{1}{2}e^{-(2p+\varDelta p)t}$.
	Therefore, since $q>p$,  we have~\eqref{eq:flips2}
	
	{\newrev If we add an identical $(M+1)$st node to $A$ and $B$ with $p_{M+1}=\infty$ and $q_{M+1}^{{\rm out}}\equiv \varDelta p$, then $A''$ and $B''$ are equivalent to $A'$ and $B'$, and so~\eqref{eq:flips_t2} holds. By continuity, the result also holds for finite but sufficiently large values of $p_{M+1}$.}	
\end{proof}

The above calculations show that the dominance between a homogeneous and heterogeneous network is not always global in time, but rather can change with time: 
\begin{corollary}
	\label{lem:local_dominance}
	
	There exists a heterogeneous network and a corresponding homogeneous network for which ${f^{\rm het}(t)-f^{\rm hom}(t)}$ changes its sign in $0<t<\infty$.
\end{corollary}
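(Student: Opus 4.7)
The plan is to reuse the two two-node networks $A'$ and $B'$ already constructed inside the proof of Lemma~\ref{lem:ABC_counter_2}: $A'$ is homogeneous with $p_1 = p_2 = p + \Delta p$ and $q_{1,2} = q_{2,1} = q$, while $B'$ is heterogeneous with $\{p_1, p_2\} = \{2p + \Delta p,\, \Delta p\}$ and $\{q_{1,2}, q_{2,1}\} = \{2q,\, 0\}$. By construction, the two networks share the same average external rate $p + \Delta p$ and the same average internal rate $q$, so $A'$ is precisely the homogeneous counterpart of $B'$ in the sense of \eqref{eq:fair}. From the large-$t$ asymptotics established in the proof of Lemma~\ref{lem:ABC_counter_2} (valid under $q > p$ and $\Delta p > q - p$), one already has $f^{B'}(t) - f^{A'}(t) < 0$ for $t \gg 1$.

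To exhibit a sign change, I would show that the reverse inequality holds near $t = 0$ using the initial-derivative formulas of Lemma~\ref{lem:n_initial_dynamics}. The first derivative $f'(0) = \tfrac{1}{M}\sum_i p_i$ equals $p + \Delta p$ for both $A'$ and $B'$, so the Taylor expansions agree at order $t$. For the second derivative, substitution into $f''(0) = \tfrac{1}{M}\bigl(\sum_i p_i q^i - \sum_i p_i^2\bigr)$ yields $f''_{A'}(0) = (p+\Delta p)(q - p - \Delta p)$ and $f''_{B'}(0) = q(2p + \Delta p) - \tfrac12\bigl[(2p+\Delta p)^2 + (\Delta p)^2\bigr]$. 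After expansion and cancellation, the difference simplifies cleanly to
\[
f''_{B'}(0) - f''_{A'}(0) \;=\; p(q - p),
\]
which is strictly positive under the standing assumption $q > p$.

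Combining the two ingredients, $f^{B'}(t) - f^{A'}(t) = \tfrac{1}{2} p(q-p)\, t^2 + O(t^3) > 0$ for all sufficiently small $t > 0$, while the same quantity is negative for $t \gg 1$. By continuity of $f^{B'} - f^{A'}$ in $t$, the sign of $f^{\mathrm{het}}(t) - f^{\mathrm{hom}}(t)$ must flip somewhere in $(0, \infty)$, which is the claim. The only genuinely new computation is the second-derivative comparison; the main thing to be careful about is joint compatibility of the two parameter inequalities $q > p$ and $\Delta p > q - p$, but this is immediate from a concrete choice such as $p = 1$, $q = 2$, $\Delta p = 2$.
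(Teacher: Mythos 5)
Your proposal is correct and is essentially the paper's own argument: the paper likewise reuses the pair $(A',B')$ from Lemma~\ref{lem:ABC_counter_2}, applies the initial-derivative formulas \eqref{eq:n'(0)} and \eqref{eq:initial_hom} to get $\left(f^{A'}\right)'(0)=\left(f^{B'}\right)'(0)=p+\Delta p$ and $\left(f^{A'}\right)''(0)-\left(f^{B'}\right)''(0)=p(p-q)<0$, and combines the resulting small-$t$ inequality with the large-$t$ reversal \eqref{eq:flips2}. Your explicit second-derivative computation $f''_{B'}(0)-f''_{A'}(0)=p(q-p)$ checks out and matches the paper's, so nothing further is needed.
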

\begin{proof}
	By~\eqref{eq:n'(0)} and~\eqref{eq:initial_hom},
	$\left(f^{A'}\right)'(0)=\left(f^{B'}\right)'(0)=p+\Delta p$, 
	and
	$\left(f^{A'} \right)''(0)-\left(f^{B'} \right)''(0)  = p(p-q)<0$. Therefore, 
	\begin{equation}
		\label{eq:flips1}
		f^{A'}(t)<f^{B'}(t), \qquad  0<t\ll 1.
	\end{equation}
	The result follows from~\eqref{eq:flips2}~and~\eqref{eq:flips1}.	
\end{proof}
Thus, the flip of the dominance as 
$(A,B) \to  (A',B')$ 
occurs for $t\gg 1$, see~\eqref{eq:flips2}, but not for $ t \ll 1$, see~\eqref{eq:flips1}, as is illustrated in Figure~\ref{fig:ABC_counter}. {\rev Indeed, the uniform increase of $\{p_i\}$ by $\Delta p$ does not change the dominance between the heterogeneous and homogeneous networks during the initial dynamics, because by Lemma~\ref{lem:n_initial_dynamics}, the initial dynamics are determined by the mean and variance of $\{p_i\}$, each of which is equally affected by a uniform shift by $\Delta p$ in the heterogeneous and homogeneous cases~(\ref{app:small_time}). This uniform increase, however, can affect the dominance later on. Indeed, as $\Delta p\to\infty$, $A'$ and $B'$ become homogeneous in $p$. However, $B'$ is heterogeneous in $q$, and so its diffusion is slower than in the homogeneous case $A'$ (Theorem~\ref{thm:compare_q}).}

\begin{figure}[ht!]
	\centering
	\begin{minipage}{.3\textwidth}
		\centering
		\begin{tikzpicture}
			\node[anchor=south west, inner sep =0] at (0,0) {\includegraphics[width=\textwidth]{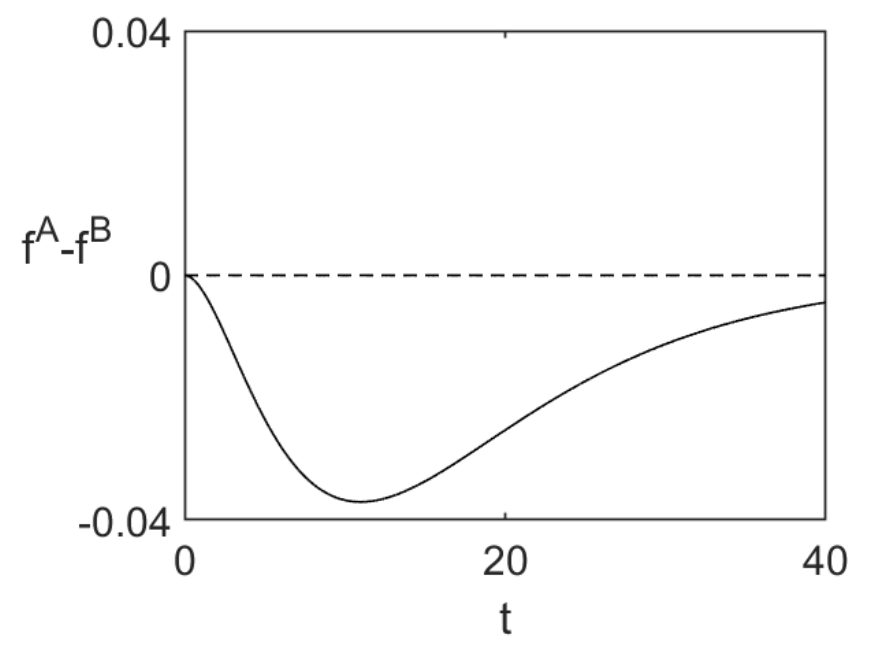}};
			\draw (4,3) node{A};
		\end{tikzpicture}
	\end{minipage}
	\begin{minipage}{.32\textwidth}
		\centering
		\begin{tikzpicture}
			\node[anchor=south west, inner sep =0] at (0,0) {\includegraphics[width=\textwidth]{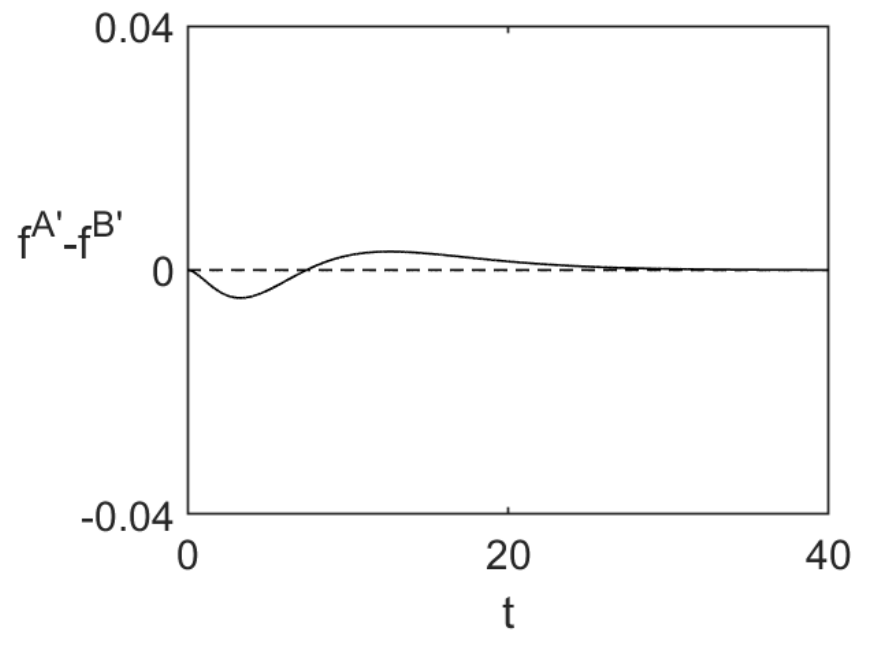}};
			\draw (4,3) node{B};
		\end{tikzpicture}
		
	\end{minipage}
	\caption{A)~Difference between $f^A$ and $f^B$ from Lemma~\ref{lem:compare_M=2pq}. B)~Difference between $f^{A'}$ and $f^{B'}$ from Lemma~\ref{lem:ABC_counter_2}. Here $p=0.05$, $q=0.15$, and $\varDelta p=0.15$. 
	}
	\label{fig:ABC_counter}
\end{figure}

\section{Heterogeneity in 1D networks.}
\label{sec:1D}
In Section~\ref{sec:complete}, we analyzed the diffusion in complete networks. We now consider the opposite type of networks, namely, structured sparse networks  where each node is only connected to one or two nodes.
\subsection{One-sided circle.}
\label{subsec:one_side_circle}
Assume that M consumers are located on a one-sided circle, such that each node can only be influenced by its left neighbor (Fig.~\ref{fig:structure}A). Thus, if $(j-i) \Mod M\neq 1$, then $q_{i,j}=0$. In this case,~\eqref{eq:general_model} reads
\begin{equation}
	\label{eq:one_sided_model}
	{\rm Prob}(X_j(t+dt)=1)=\begin{cases}
		1,&\qquad {\rm if}\ X_j(t)=1,\\
		\left(p_j+q_jX_{j-1}(t)\right)dt, &\qquad {\rm if}\ X_j(t)=0,
	\end{cases}
\end{equation}
where $q_j=q_{j-1,j}$, see~\eqref{eq:q_on_node}.
Let $(S_k^j)(t):=(S^{j-k+1},S^{j-k+2},\dots,S^j)(t)$ denote the event that the chain of $k$ nodes that ends at node j are all non-adopters at time $t$, see Fig.~\ref{fig:structure}A, and let $[S_k^j](t)$ denote the probability of that event. We proceed to derive the master equations for $[S_k^j](t)$:

\begin{lemma}
	\label{lem:masterHetero}
	Consider the heterogeneous discrete Bass model~\eqref{eq:one_sided_model} on a one-sided circle. For any $j$, $1\leq j \leq M$, the $M$ master equations for $\{[S_k^j]\}_{k=1}^M$ are\footnote{\label{foot:q_{i,j}} Here we use the conventions of Remark~\ref{rem:q_{i,j}}.}
	\begin{subequations}
		\label{eqs:masterHeteros}
		\begin{equation}
			\label{eq:masterHetero}
			\frac{d}{dt}[S_k^j](t)=-\left(\left( \sum_{i=j-k+1}^{j}p_i\right)+ q_{j-k+1}\right) [S_k^j](t)+q_{j-k+1}[S_{k+1}^j](t), \qquad  k=1,\dots,M-1,
		\end{equation} 
		and\footnote{Note that $(S_M^j)(t)=(S_M)(t)$ is independent of $j$.}
		\begin{equation}
			\label{eq:masterHeteroM}
			\frac{d}{dt}[S_M^j](t)=\left(-\sum_{i=1}^{M}p_i\right)[S_M^j](t),
		\end{equation}
		subject to the initial conditions
		\begin{equation}
			\label{eq:masterHeteroMInitial}
			[S_k^j](0)=1, \qquad k=1,\dots,M.
		\end{equation}
	\end{subequations}
\end{lemma}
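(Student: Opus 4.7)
The plan is to derive each master equation by considering the infinitesimal change $[S_k^j](t+dt) - [S_k^j](t)$ and exploiting the one-sided structure that only the leftmost node of the chain can receive internal influence from outside the chain.

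First I would fix $k$ with $1 \le k \le M-1$ and condition on the event $(S_k^j)(t)$, so that all nodes $j-k+1, \ldots, j$ are non-adopters at time $t$. By \eqref{eq:one_sided_model}, during $(t, t+dt)$ each node $i$ in the chain adopts at rate $p_i + q_i X_{i-1}(t)$. For $i = j-k+2, \ldots, j$, the left neighbor $i-1$ lies inside the chain and is therefore a non-adopter (conditional on $S_k^j$), so the internal term vanishes and the effective rate collapses to $p_i$. Only the leftmost node $j-k+1$ has its left neighbor $j-k$ outside the chain, contributing an additional rate $q_{j-k+1} X_{j-k}(t)$, which depends on whether $j-k$ has already adopted.

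Summing these rates and using linearity, I get
\[
\frac{d}{dt}[S_k^j](t) = -\Bigl(\sum_{i=j-k+1}^{j} p_i\Bigr)[S_k^j](t) - q_{j-k+1}\, \E\bigl[X_{j-k}(t)\mathbbm{1}_{(S_k^j)(t)}\bigr].
\]
The key identity is
\[
\E\bigl[X_{j-k}(t)\mathbbm{1}_{(S_k^j)(t)}\bigr] = [S_k^j](t) - [S_{k+1}^j](t),
\]
which follows from decomposing the event $(S_k^j)$ according to whether $X_{j-k}=0$ (giving $(S_{k+1}^j)$) or $X_{j-k}=1$. Substituting yields exactly \eqref{eq:masterHetero}.

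For $k = M$ the chain wraps around the entire circle, so there is no node outside the chain to exert any internal influence; the $q$-term is absent and only the sum $\sum_{i=1}^M p_i$ of external rates remains, giving \eqref{eq:masterHeteroM}. Finally, the initial conditions \eqref{eq:masterHeteroMInitial} follow immediately from \eqref{eq:general_initial}, since at $t=0$ every node is a non-adopter with probability one. The only place where one must take care is the bookkeeping for indices modulo $M$ (cf. Remark~\ref{rem:q_{i,j}}) and the distinction between $k < M$ (one boundary node receiving external internal influence) and $k = M$ (no such boundary); this is the main, though entirely routine, obstacle.
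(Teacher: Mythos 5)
Your proof is correct, and it reaches \eqref{eq:masterHetero} by a route that is genuinely different in organization from the paper's, though built from the same raw material. The paper proves Lemma~\ref{lem:masterHetero} as a specialization of the general master equations of Lemma~\ref{lem:masterHeterofc}: starting from \eqref{eq:masterHeterofc}, it observes that on a one-sided circle $q_{l,i}\neq 0$ only if $(i-l)\Mod M=1$, so that in the double sum over outside nodes $l\notin\{j-k+1,\dots,j\}$ and chain nodes $i$ the only surviving term is $q_{j-k,j-k+1}=q_{j-k+1}$, which collapses the general equation to \eqref{eq:masterHetero} in one line. You instead re-derive the equation from first principles: conditioning on $(S_k^j)$, noting that every interior node of the chain has a non-adopting left neighbor and hence adopts at rate $p_i$ alone, isolating the single boundary term $q_{j-k+1}X_{j-k}(t)$, and resolving it with the total-probability identity $\E\bigl[X_{j-k}(t)\mathbbm{1}_{(S_k^j)(t)}\bigr]=[S_k^j](t)-[S_{k+1}^j](t)$. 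This is precisely the mechanism (destruction rates plus the decomposition $[S^{m_1},\dots,S^{m_n},I^{l}]=[S^{m_1},\dots,S^{m_n}]-[S^{m_1},\dots,S^{m_n},S^{l}]$) that the paper uses to prove the \emph{general} Lemma~\ref{lem:masterHeterofc} in \ref{app:lem:masterHeterofc}; in effect you have inlined that proof and specialized it to the circle. Both arguments are sound: the paper's buys brevity and reuse of an already-established result, while yours buys a self-contained derivation that makes transparent why exactly one $q$-term survives (only the leftmost node of the chain has an exposed left neighbor). Your treatment of the $k=M$ case and of the initial conditions coincides with the paper's.
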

\begin{proof}
	See~\ref{app:lem:masterHetero}.	
\end{proof}
Equations~\eqref{eqs:masterHeteros} for $\{[S_k^j](t)\}_{k=1}^M$ are decoupled from those for $\{[S_k^i](t)\}_{k=1}^M$ for $i\neq j$. This allows us to solve them explicitly for any $M$, and thus to obtain the fractional adoption on a one-sided heterogeneous circle:
\begin{theorem}
	\label{thm:one_sided}
	Consider the heterogeneous discrete Bass model~\eqref{eq:one_sided_model} on the one-sided circle. Then 
	\begin{equation}
		\label{eq:expectedHeteroOneSidedCircle}
		f_{\rm circle}^{\rm 1-sided}(t)=1-\frac{1}{M}\sum\limits_{j=1}^{M}[S_1^j](t),
		\qquad 
		[S_1^j](t)=\sum_{k=1}^{M}c_k^j\boldsymbol{v}_k^j(1)e^{\lambda_k^jt},
	\end{equation}
	where$^{\ref{foot:q_{i,j}}}$
	\begin{subequations}
		\label{eqs:one_sided_eigenvectors}
		\begin{equation}
			\label{eq:one_sided_eigenvalues}
			\lambda_k^j = \begin{cases}
				\left(-\sum_{i=j-k+1}^{j}p_i\right) -q_{j-k+1}, &\qquad k=1,\dots,M-1,\\
				-\sum_{i=1}^{M}p_i, &\qquad k=M,
			\end{cases}
		\end{equation}
		\begin{equation}
			\boldsymbol{v}_k^j(1)=
			\begin{cases}
				1, &\qquad k=1,\\
				\prod\limits_{m=j-k+2}^{j}\frac{-q_m}{\left(\sum\limits_{i=j-k+1}^{m-1}p_i\right) +q_{j-k+1}-q_m}, &\qquad k=2,\dots,M-1,\\
				\prod\limits_{m=j-M+2}^{j}\frac{-q_m}{\left(\sum\limits_{i=j-M+1}^{m-1}p_i\right) -q_m}, &\qquad k=M,
			\end{cases}
		\end{equation}
		$c_M^j=1$, and for  $k=M-1,\dots,1$,
		\begin{equation}
			\label{eq:one_sided_constants}
			\begin{aligned}
				c_k^j=1-&\prod\limits_{m=j+1-(M-1)}^{j+1-k}\frac{-q_m}
				{\left(\sum\limits_{i=j-M+1}^{m-1}p_i\right)-q_m}
				-\sum_{l=k+1}^{M-1}\left(\prod\limits_{m=j-l+2}^{j-k+1}\frac{-q_m}
				{\left(\sum\limits_{i=j-l+1}^{m-1}p_i\right)+q_{j-l+1}-q_m}\right)c_l^j. 
			\end{aligned}
		\end{equation}
	\end{subequations}
\end{theorem}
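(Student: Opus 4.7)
The plan is to exploit the fact that the master equations in Lemma~\ref{lem:masterHetero} form a \emph{linear bidiagonal (hence upper-triangular) ODE system}, which can be fully diagonalized by back-substitution.

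For each fixed $j$, consider the vector $\mathbf{y}^j(t) = ([S_1^j](t), [S_2^j](t), \ldots, [S_M^j](t))^T$. Equations~\eqref{eqs:masterHeteros} read $\mathbf{y}^j{}' = A^j \mathbf{y}^j$, where $A^j$ is the $M\times M$ upper bidiagonal matrix whose diagonal entries are $A^j_{kk} = \lambda_k^j$ (as defined in~\eqref{eq:one_sided_eigenvalues}) and whose superdiagonal entries are $A^j_{k,k+1} = q_{j-k+1}$ for $k = 1,\ldots,M-1$. Since $A^j$ is upper triangular, its spectrum is $\{\lambda_k^j\}_{k=1}^M$; assuming these are distinct (the degenerate case follows by continuity/limits), $A^j$ is diagonalizable. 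Thus $\mathbf{y}^j(t) = \sum_{k=1}^M c_k^j\, e^{\lambda_k^j t}\, \boldsymbol{v}_k^j$ for some eigenvectors $\boldsymbol{v}_k^j$ and constants $c_k^j$ determined by the initial condition~\eqref{eq:masterHeteroMInitial}.

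Next, I would compute the eigenvectors by back-substitution. Because $A^j$ is upper bidiagonal, the eigenvector $\boldsymbol{v}_k^j$ for $\lambda_k^j$ can be taken to be supported on the first $k$ coordinates (i.e. $\boldsymbol{v}_k^j(i)=0$ for $i>k$), with $\boldsymbol{v}_k^j(k)$ normalized to $1$. Row $i<k$ of the eigenvalue equation $(A^j-\lambda_k^j I)\boldsymbol{v}_k^j=0$ yields the one-step recursion
\[
\boldsymbol{v}_k^j(i) \;=\; \frac{-\,q_{j-i+1}}{\lambda_i^j - \lambda_k^j}\,\boldsymbol{v}_k^j(i+1).
\]
Iterating this from $i=k-1$ down to $i=1$ and substituting the explicit values of $\lambda_i^j$ from~\eqref{eq:one_sided_eigenvalues} (after the reindexing $m=j-i+1$) yields the stated formula for $\boldsymbol{v}_k^j(1)$, with the separate case $k=M$ coming from the absence of a $q$-term in $\lambda_M^j$.

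To get the constants $c_k^j$, I would impose $\mathbf{y}^j(0)=\mathbf{1}$. Since $\boldsymbol{v}_k^j$ is supported on the first $k$ coordinates and $\boldsymbol{v}_k^j(k)=1$, reading off components from bottom to top gives a triangular linear system for $\{c_k^j\}$: component $M$ yields $c_M^j=1$, and component $k$ yields
\[
c_k^j \;=\; 1 \;-\; \boldsymbol{v}_M^j(k) \;-\; \sum_{l=k+1}^{M-1} \boldsymbol{v}_l^j(k)\, c_l^j .
\]
Computing $\boldsymbol{v}_l^j(k)$ for $l<M$ and for $l=M$ by the same recursion (and the substitution $m=j-i+1$) reproduces the two products appearing in~\eqref{eq:one_sided_constants}, so the recursive formula in the theorem follows.

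Finally, $[S_1^j](t)$ is the first component of $\mathbf{y}^j(t)$, which gives $[S_1^j](t)=\sum_k c_k^j \boldsymbol{v}_k^j(1)e^{\lambda_k^j t}$; substituting into~\eqref{eq:expectedHeterofc} yields the stated formula for $f_{\rm circle}^{\rm 1-sided}(t)$. The main obstacle is purely bookkeeping: keeping the cyclic indexing (subscripts mod $M$, as in Remark~\ref{rem:q_{i,j}}) consistent across the two index substitutions (from $i$ to $m=j-i+1$), and verifying that the denominators $\lambda_i^j - \lambda_k^j$ match the stated denominators $\sum p + q_{j-k+1} - q_m$ (resp.\ $\sum p - q_m$ in the $k=M$ case) after the substitution. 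The degenerate case where some $\lambda_k^j$ coincide can be handled by a density/limit argument, since both sides of~\eqref{eq:expectedHeteroOneSidedCircle} are continuous in the parameters $\{p_i\},\{q_i\}$.
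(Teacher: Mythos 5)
Your proposal is correct and follows essentially the same route as the paper's proof in \ref{app:thm:one_sided}: recasting the master equations~\eqref{eqs:masterHeteros} as the upper-bidiagonal system $\dot{[\boldsymbol{S}^j]}=\boldsymbol{A}^j[\boldsymbol{S}^j]$, reading the eigenvalues off the diagonal, computing the eigenvectors (supported on the first $k$ coordinates, normalized so $\boldsymbol{v}_k^j(k)=1$) by back-substitution, and solving the triangular system $\sum_k c_k^j\boldsymbol{v}_k^j=(1,\dots,1)^T$ from the bottom up. Your recursion $\boldsymbol{v}_k^j(i)=\frac{-q_{j-i+1}}{\lambda_i^j-\lambda_k^j}\boldsymbol{v}_k^j(i+1)$ reproduces the paper's Lemma~\ref{lem:systemSolution} exactly, and your explicit continuity argument for coincident eigenvalues is a small tidying-up that the paper leaves implicit.
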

\begin{proof}
	See~\ref{app:thm:one_sided}.	
\end{proof}
As expected, when M=2, \eqref{eq:expectedHeteroOneSidedCircle}-\eqref{eqs:one_sided_eigenvectors}
%
reduced to~\eqref{eqs:fc_M=2}, and 
when $M=3$,
it reduces to~\eqref{eqs:fc_M=3} with $q_{j+1,j}=q_{j+2,j+1}=0$.

\subsection{Two-sided circle.}
\label{subsec:two_sided_circle}

Consider now a circle with M nodes where each node can be influenced by its left and right neighbors. Thus, if $|j-i| \Mod M\neq 1$, then $q_{i,j}=0$ (figure~\ref{fig:structure}B). In this case,~\eqref{eq:general_model} reads
\begin{equation}
	\label{eq:two_sided_model}
	{\rm Prob}(X_j(t+dt)=1)=\begin{cases}
		1,&\qquad {\rm if}\ X_j(t)=1,\\
		\left[p_j+q_{j-1,j}X_{j-1}(t)+q_{j+1,j}X_{j+1}(t)\right]dt, &\qquad {\rm if}\ X_j(t)=0.
	\end{cases}
\end{equation}
Let $(S_{m,n}^j)(t):=(S^{j-m},\dots,S^j,\dots,S^{j+n})(t)$ denote the event that the  $m+n+1$ nodes $\{j-m,\dots,j,\dots,j+n\}$ are all non-adopters at time t, see Figure~\ref{fig:structure}B, and let $[S_{m,n}^j](t;M)$ denote the probability of that event. We proceed to derive the master equations for $[S^j]=[S_{0,0}^j]$:

\begin{lemma}
	\label{lem:masterHetero2}
	Consider the heterogeneous discrete Bass model~\eqref{eq:two_sided_model} on the two-sided circle. For any $1\leq j\leq M$ and any  $0\leq m+n\leq M-2$,  the master equations for $\{[S_{m,n}^j]\}$ are\footnote{Here we also use the conventions of Remark~\ref{rem:q_{i,j}}.}
	\begin{subequations}
		\label{eqs:masterHetero2}
		
		\begin{equation}
			\label{eq:masterHetero2}
			\begin{aligned}
				\frac{d}{dt}[S_{m,n}^j](t)=&-\Bigg( \Bigg(\sum_{i=j-m}^{j+n}p_i\Bigg)+ q_{j-m-1,j-m} + q_{j+n+1,j+n}\Bigg) [S_{m,n}^j](t)+q_{j+n+1,j+n}[S_{m,n+1}^j](t)
				\\& \quad +q_{j-m-1,j-m}[S_{m+1,n}](t), 
			\end{aligned}
		\end{equation}
		and
		\begin{equation}
			\label{eq:masterHetero2M}
			\frac{d}{dt}[S_M](t)=\left(-\sum_{i=1}^{M}p_i\right)[S_M](t),
		\end{equation} 
		subject to the initial condition
		\begin{equation}
			\label{eq:masterHetero2Initial}
			[S_{m,n}^j](0)=1, \qquad 0\leq m+n\leq M-1,
		\end{equation}
	\end{subequations}
	where $[S_M](t):=[S_{M-1,0}^j](t)=[S_{M-2,1}^j](t)=\dots=[S_{0,M-1}^j](t)$.
\end{lemma}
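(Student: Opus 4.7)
The plan is to derive the master equations by tracking the change in $[S_{m,n}^j](t)$ over an infinitesimal interval $(t,t+dt)$, paralleling the derivations of Lemma~\ref{lem:masterHeterofc} and Lemma~\ref{lem:masterHetero}. Since adopters remain adopters for all time, the event $(S_{m,n}^j)$ can only be destroyed, never created, during $(t,t+dt)$. Thus I would start from
\begin{equation*}
    [S_{m,n}^j](t+dt) = [S_{m,n}^j](t) - \mathrm{Prob}\bigl((S_{m,n}^j)(t) \text{ and some node in the chain adopts in } (t,t+dt)\bigr) + o(dt).
\end{equation*}

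Next I would compute the hazard rate at which the chain event is destroyed, conditional on the chain being all non-adopters at time~$t$. Each node $i$ in the chain $\{j-m,\dots,j+n\}$ contributes $p_i$ from external influence. Internal influences on interior nodes of the chain vanish, because on the two-sided circle a node has only a left and a right neighbor, both of which sit inside the chain (and are thus non-adopters by assumption). Only the two boundary nodes admit a nonzero internal influence: the left-boundary node $j-m$ is influenced by $j-m-1$ at rate $q_{j-m-1,j-m}\,X_{j-m-1}(t)$, and the right-boundary node $j+n$ is influenced by $j+n+1$ at rate $q_{j+n+1,j+n}\,X_{j+n+1}(t)$.

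Taking expectations and using the identity (obtained by partitioning on the adoption state of the adjacent outside node)
\begin{equation*}
    \mathbb{E}\bigl[X_{j-m-1}(t)\,\mathbf{1}_{(S_{m,n}^j)(t)}\bigr] = [S_{m,n}^j](t) - [S_{m+1,n}^j](t),
\end{equation*}
together with its right-boundary analogue, one obtains \eqref{eq:masterHetero2} after rearrangement. For \eqref{eq:masterHetero2M}, when the chain contains every node there are no outside neighbors and internal influences are absent entirely, leaving only the sum of external rates. The initial condition \eqref{eq:masterHetero2Initial} is immediate from~\eqref{eq:general_initial}.

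The main obstacle is purely bookkeeping: correctly identifying interior versus boundary nodes of the chain and handling indices modulo~$M$ (using the conventions of Remark~\ref{rem:q_{i,j}}), especially to verify that the three quantities $[S_{M-1,0}^j]=[S_{M-2,1}^j]=\dots=[S_{0,M-1}^j]$ all coincide with a single $[S_M]$. Once the hazard rate is written down correctly, the master equations follow by the same short computation used for the one-sided circle in Lemma~\ref{lem:masterHetero}.
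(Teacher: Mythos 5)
Your proposal is correct, but it takes a different route from the paper. The paper's proof of this lemma is essentially one line: it invokes the already-established general master equations of Lemma~\ref{lem:masterHeterofc}, applied to the chain $\{j-m,\dots,j+n\}$, and observes that on the two-sided circle $q_{l,i}\neq 0$ only when $|i-l| \Mod M=1$, so that among all the coupling terms $\sum_{i} q_{l,i}$ with $l$ outside the chain, only $q_{j-m-1,j-m}$ and $q_{j+n+1,j+n}$ survive; equations \eqref{eq:masterHetero2M} and \eqref{eq:masterHetero2Initial} then follow directly from \eqref{eq:masterHeterofcM} and \eqref{eq:masterHeterofcInitial}. You instead rebuild the equation from first principles over $(t,t+dt)$, which amounts to inlining the proof of Lemma~\ref{lem:masterHeterofc} in the special case: your identity $\E\bigl[X_{j-m-1}(t)\,\mathbf{1}_{(S_{m,n}^j)(t)}\bigr]=[S_{m,n}^j](t)-[S_{m+1,n}^j](t)$ is exactly the total-probability step $[S,\dots,S,I^l]=[S,\dots,S]-[S,\dots,S,S^l]$ used in \ref{app:lem:masterHeterofc}, and your boundary-versus-interior analysis of the chain is the same vanishing-edge observation the paper makes. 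Both arguments are valid; the paper's specialization buys brevity and avoids re-verifying the Markov bookkeeping, while your derivation buys self-containedness and makes the destruction mechanism explicit. One boundary case worth making explicit in your write-up: when $m+n=M-2$ the chain omits a single node, so $j-m-1$ and $j+n+1$ coincide modulo $M$; your identity still applies to each endpoint separately (the two internal-influence hazards act on distinct chain nodes, so there is no double counting), and both $[S_{m+1,n}^j]$ and $[S_{m,n+1}^j]$ then equal $[S_M]$, which is also why the quantities $[S_{M-1,0}^j]=\dots=[S_{0,M-1}^j]$ all coincide --- they describe the same event that all $M$ nodes are non-adopters.
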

\begin{proof}
	See~\ref{app:lem:masterHetero2}.	
\end{proof}

\Remark As in the one-sided case, the $\frac{M(M-1)}{2}+1$ equations for $\{[S_{m,n}^j](t)\}_{m,n}$ are decoupled from those for $\{[S_{m,n}^i](t)\}_{m,n}$ for $i\neq j$.

\begin{theorem}
	\label{thm:two_sided}
	Consider the heterogeneous discrete Bass model~\eqref{eq:two_sided_model} on the two-sided circle. Then
	\begin{equation}
		\label{eq:expectedHeteroTwoSidedCircle}
		f_{\rm circle}^{\rm 2-sided}(t)=1-\frac{1}{M}\sum\limits_{j=1}^{M}[S_{0,0}^j](t),
	\end{equation}
	where $\{[S_{0,0}^j](t)\}_{j=1}^M$ can be determined from~\eqref{eqs:masterHetero2}.
\end{theorem}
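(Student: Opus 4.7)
The plan is to follow essentially the same two-step strategy used in Theorem~\ref{thm:one_sided}, except that the inward recursion now runs over a two-dimensional index set rather than a one-dimensional chain. First I would translate the aggregate adoption into a sum of single-node non-adoption probabilities. Since $X_j(t) \in \{0,1\}$, we have $\E[X_j(t)] = \mathrm{Prob}(X_j(t)=1) = 1 - \mathrm{Prob}(X_j(t)=0)$, and by definition $\mathrm{Prob}(X_j(t)=0) = [S^j](t) = [S^j_{0,0}](t)$. Substituting into \eqref{eq:n(t)_general}--\eqref{eq:number_to_fraction} gives
\begin{equation*}
f^{\rm 2-sided}_{\rm circle}(t) \;=\; \frac{1}{M}\sum_{j=1}^{M}\E[X_j(t)] \;=\; 1-\frac{1}{M}\sum_{j=1}^{M}[S^j_{0,0}](t),
\end{equation*}
which is the identity asserted in \eqref{eq:expectedHeteroTwoSidedCircle}.

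Next I would justify the second assertion, namely that each $[S^j_{0,0}](t)$ is determined by the system \eqref{eqs:masterHetero2}. The key observation, already highlighted in the Remark following Lemma~\ref{lem:masterHetero2}, is that for each fixed $j$ the $\tfrac{M(M-1)}{2}+1$ variables $\{[S^j_{m,n}](t) : 0 \le m+n \le M-2\}\cup\{[S_M](t)\}$ form a closed linear ODE system with constant coefficients and prescribed initial data \eqref{eq:masterHetero2Initial}. By the standard existence and uniqueness theorem, this system admits a unique solution on $[0,\infty)$, and in particular $[S^j_{0,0}](t)$ is uniquely determined.

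To make the determination constructive (mirroring what was done in Theorem~\ref{thm:one_sided}), I would solve the system by back-substitution along level sets of $k := m+n$, starting from $k=M-1$ and descending. At the top level, \eqref{eq:masterHetero2M} gives $[S_M](t) = \exp\!\bigl(-\sum_{i=1}^{M}p_i\,t\bigr)$ directly. For each subsequent level $k = M-2, M-3,\dots,0$, equation \eqref{eq:masterHetero2} expresses $\tfrac{d}{dt}[S^j_{m,n}]$ as a linear combination of $[S^j_{m,n}]$ itself (through the outgoing rates) and of $[S^j_{m+1,n}]$ and $[S^j_{m,n+1}]$, both of which sit at level $k+1$ and are therefore already known. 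Each equation at level $k$ is then a scalar linear first-order ODE with a known forcing term (a sum of exponentials inherited from higher levels), which can be integrated explicitly using an integrating factor, with the constant of integration fixed by \eqref{eq:masterHetero2Initial}.

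The main obstacle, compared with the one-sided circle, is precisely this two-dimensional coupling: in \eqref{eqs:masterHetero} each $[S_k^j]$ depended on a single ``longer'' chain $[S_{k+1}^j]$, producing a clean triangular system and the closed-form product expressions \eqref{eqs:one_sided_eigenvectors}. Here, however, every $[S^j_{m,n}]$ depends on two distinct higher-level probabilities $[S^j_{m+1,n}]$ and $[S^j_{m,n+1}]$, so the eigenvalues and eigenvectors of the level-$k$ subsystem accumulate contributions from a growing tree of ancestor terms. Consequently, while the back-substitution scheme above proves existence and constructibility cleanly, obtaining a compact closed-form analogue of \eqref{eqs:one_sided_eigenvectors} becomes combinatorially intricate and is the reason the authors state the theorem only up to ``can be determined from~\eqref{eqs:masterHetero2},'' leaving explicit formulas for the small cases $M=2,3$ already recorded in \eqref{eqs:fc_M=2}--\eqref{eqs:fc_M=3}.
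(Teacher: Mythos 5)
Your proposal is correct and follows essentially the same route as the paper: identity \eqref{eq:expectedHeteroTwoSidedCircle} is, just as in the proof of Theorem~\ref{thm:one_sided}, immediate from ${\rm Prob}(X_j(t)=0)=[S_{0,0}^j](t)$ together with \eqref{eq:n(t)_general}--\eqref{eq:number_to_fraction}, and the solvability claim is exactly what the paper establishes in \ref{app:S_{0,0}^j} by rewriting \eqref{eqs:masterHetero2} as the triangular constant-coefficient linear system \eqref{eqs:two_sided_ivp} for each fixed $j$ (using the decoupling noted in the Remark after Lemma~\ref{lem:masterHetero2}). Your only deviation is cosmetic but marginally cleaner: integrating level-by-level in $k=m+n$ with integrating factors determines $[S_{0,0}^j](t)$ without the distinct-eigenvalue assumption that the paper's eigenvector expansion in the appendix imposes.
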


Unlike the one-sided case, we have not found a way to explicitly solve for $[S_{0,0}^j](t)$ for a general $M$ (see~\ref{app:S_{0,0}^j}).
We did, however, obtain explicit solutions of~\eqref{eqs:masterHetero2} for $M=2$ and $M=3$~(\ref{app:S_{0,0}^j}). As expected, the resulting expressions for $f_{\rm circle}^{\rm 2-sided}$ identify with~\eqref{eqs:fc_M=2} and~\eqref{eqs:fc_M=3}, respectively.

\subsection{Comparison of one-sided and two-sided circles.}
\label{subsec:circle_compare}

Recall that on homogeneous circles, one-sided and two-sided diffusion are identical if $q=q^R+q^L$, see~\eqref{eqs:compare_Fibich_Gibori}. To extend this condition to the heterogeneous case,
we interpret it as saying that  the sum of the incoming~$q_{k,j}$ into each node is identical in both networks, i.e., 
%
%
%
\begin{equation}
	\label{eq:convert_two_to_one}
	q_j^{\rm 1-sided}=q_{j-1,j}^{\rm 2-sided}+q_{j+1,j}^{\rm 2-sided}, \qquad j=1, \dots, M.
\end{equation}
In light of~\eqref{eqs:compare_Fibich_Gibori}, is it true that in the heterogeneous case $f_{\rm circle}^{{\rm 1-sided}}(t; p,\{q_j\})\equiv f_{\rm circle}^{\rm 2-sided}(t;,p,\{q_{j\pm 1,j}\})$ when~\eqref{eq:convert_two_to_one} holds?

When $M=2$, heterogeneous one-sided and two-sided circles are by definition identical, and so $f_{\rm circle}^{{\rm 1-sided}}\equiv f_{\rm circle}^{\rm 2-sided}$. 
When $M>2$ however, this is no longer the case:
\begin{lemma}
	\label{lem:one_vs_two_sided}
	Consider a one-sided and a two-sided heterogeneous circles with $M\geq 3$ nodes, for which~\eqref{eq:convert_two_to_one} holds. Then $f_{{\rm circle}}^{\rm 1-sided}(t)$ can be (globally in-time) larger or smaller than $f_{{\rm circle}}^{\rm 2-sided}(t)$.
\end{lemma}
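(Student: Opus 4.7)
My strategy is to prove this existential statement by constructing two explicit counter-examples on three-node circles ($M=3$), one in each direction. The key observation is that a two-sided circle with $M=3$ is structurally a complete graph with (possibly asymmetric) weights $q_{j-1,j}^{\rm 2-sided}$ and $q_{j+1,j}^{\rm 2-sided}$ on its three edges, while the one-sided circle is the degenerate case $q_{j+1,j}=0$, $q_{j-1,j}=q_j$. Both are therefore covered by the explicit closed-form expression \eqref{eqs:fc_M=3}, which I can specialize to compute $f_{\rm circle}^{\rm 1-sided}(t)$ and $f_{\rm circle}^{\rm 2-sided}(t)$ for any common choice of $\{p_j\}$ and $\{q_j\}$ satisfying \eqref{eq:convert_two_to_one}.

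To locate counter-examples, I would first exploit the initial-dynamics expressions from Lemma \ref{lem:n_initial_dynamics}. Both networks share the same $f'(0) = \frac{1}{M}\sum_j p_j$, but
\[
 f''(0) = \frac{1}{M}\Bigl(\sum_i p_i q^i - \sum_i p_i^2\Bigr),
\]
where $q^i$ is the outgoing influence from~$i$. Now, condition \eqref{eq:convert_two_to_one} matches the \emph{incoming} weights, not the outgoing ones: in the one-sided case $q^i = q_{i+1}$, while in the two-sided case $q^i = q_{i-1,i-1}^{\rm in} + q_{i+1,i+1}^{\rm in}$ read in the opposite direction. Thus $\sum_i p_i q^i$ generically differs, and by correlating $\{p_i\}$ strongly with the one-sided outgoing distribution (making $p_i$ large where the single outgoing edge is heavy), I get $f^{\rm 1-sided}{}''(0) > f^{\rm 2-sided}{}''(0)$ and hence initial-time dominance of the one-sided network; the opposite correlation yields the reverse. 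Concretely, I would pick $M=3$, fix $\{p_j\}$ skewed toward one node, and tune $\{q_{j-1,j}^{\rm 2-sided}, q_{j+1,j}^{\rm 2-sided}\}$ so that the resulting $q^i$-profiles are anti-correlated between the two networks.

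The main obstacle is upgrading this initial-time inequality to \emph{global-in-time} dominance. My plan is to work directly from \eqref{eqs:fc_M=3}: the difference $f^{\rm 1-sided}(t) - f^{\rm 2-sided}(t)$ is a combination of decaying exponentials whose slowest mode $e^{-(p_1+p_2+p_3)t}$ carries the same coefficient in both representations (since the $(p_1{+}p_2{+}p_3)$-eigenvalue depends only on $\{p_j\}$, and the residue computation yields an identical formula under condition \eqref{eq:convert_two_to_one}). Consequently the difference is controlled by a small number of faster-decaying exponentials whose combined sign I can track by direct estimation for explicit numerical parameter values. Should this term-by-term estimate prove awkward, the paper's standard fallback, which I would adopt, is to verify monotonicity of the sign by plotting $f^{\rm 1-sided}(t)-f^{\rm 2-sided}(t)$ using the explicit formula (or by Monte Carlo simulation of \eqref{eq:one_sided_model} and \eqref{eq:two_sided_model}); since the claim is purely existential, such verification closes the argument.
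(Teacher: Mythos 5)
Your proposal contains a genuine gap: the step from initial-time dominance to \emph{global-in-time} dominance is never actually carried out, and neither of the two devices you offer for it would work. First, your small-time analysis via Lemma~\ref{lem:n_initial_dynamics} is sound as far as it goes --- condition~\eqref{eq:convert_two_to_one} matches incoming weights while $f''(0)$ depends on the outgoing sums $q^i$, so the sign of $f^{\rm 1\text{-}sided}{}''(0)-f^{\rm 2\text{-}sided}{}''(0)$ can indeed be engineered either way --- but this only yields dominance for $0<t\ll 1$, whereas the lemma asserts dominance on all of $(0,\infty)$. Second, your key structural claim about the mode decomposition is unsubstantiated and, as stated, wrong: the coefficient $c_j$ of $e^{-(p_1+p_2+p_3)t}$ in~\eqref{eqs:fc_M=3} depends on the individual $q_{i,j}$'s (not just on $\{p_j\}$), so there is no reason it coincides for the two networks under~\eqref{eq:convert_two_to_one}; moreover $e^{-(p_1+p_2+p_3)t}$ need not be the slowest mode at all --- the modes $-(p_j+q_{j-1,j}+q_{j+1,j})$ can decay more slowly (indeed, in the natural extreme examples with $p_2=p_3=0$ and some incoming $q_j=0$, one of these modes is constant). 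Third, the fallback of ``plotting the explicit formula or Monte Carlo simulation'' cannot close the argument: the statement is existential over networks but \emph{universal over $t$}, and a plot on a finite window (let alone stochastic simulation) does not certify an inequality for all $t\in(0,\infty)$. Finally, a practical caveat: the coefficients in~\eqref{eqs:fc_M=3} have denominators such as $p_{j-1}-q_{j-1,j}-q_{j+2,j+1}$ that vanish precisely in the degenerate parameter regimes you would be drawn to, so direct specialization of that formula requires re-solving the master equations, as the paper itself does in the analogous $M=2$ degenerate case (\ref{app:lem:compare_M=2pq}).

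The paper avoids all of this analysis with a reachability argument on two degenerate $M=3$ examples with $p_1=p$, $p_2=p_3=0$. In the first, the two-sided circle has only the mutual edges $q_{1,3}=q_{3,1}=q$; under~\eqref{eq:convert_two_to_one} the corresponding one-sided circle has $q_1=q_3=q$, $q_2=0$, and one checks that node~1 adopts at the same rate in both networks, node~2 adopts in neither, and node~3 adopts only in the two-sided circle (in the one-sided circle its sole influencer, node~2, never adopts), giving $f^{\rm 2\text{-}sided}(t)>f^{\rm 1\text{-}sided}(t)$ for all $t>0$. In the second, the two-sided edges are $q_{2,3}=q_{3,2}=q$; then in the two-sided circle nodes~2 and~3 can only influence each other and never adopt, while in the corresponding one-sided circle the chain $1\to 2\to 3$ lets both adopt, giving the reverse strict global dominance. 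Because in each example the set of nodes that can ever adopt differs monotonically between the two networks while the remaining nodes behave identically, global-in-time strict dominance is immediate, with no mode-by-mode estimates needed. If you want to salvage your approach, replace your step two with this kind of construction (or with an argument via Theorem~\ref{thm:CDF-dominance} comparing adoption-time CDFs, as the paper does for Lemmas~\ref{lem:compare_M=2pq} and~\ref{lem:het>hom}); as written, your proof establishes the claim only for small times.
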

\begin{proof}
	Consider the following $M=3$ circles with $p_1=p$ and $p_2=p_3=0$.
	\begin{enumerate}[label=(\arabic*)]
		\item 
		Let $q_{1,3} = q_{3,1} = q$ and $q_{2,3}=q_{3,2} = q_{2,1} = q_{1,2} =0$ (Figure~\ref{subfig:network_1}). By~\eqref{eq:convert_two_to_one}, in the one-sided case $q_{3}=q_{1}=q$ and $q_{2}=0$. Hence, node 1 adopts at the same rate in both networks, node 2 adopts in neither case, and node 3 only adopts in the two-sided circle. Therefore, $f_{{\rm circle}}^{\rm 2-sided}(t)$ is larger than $f_{{\rm circle}}^{\rm 1-sided}(t)$ for all $t$.
		\item 
		Let $q_{2,3} = q_{3,2} = q$ and $q_{1,3}=q_{3,1} = q_{2,1} = q_{1,2} =0$ (Figure~\ref{subfig:network_2}). By~\eqref{eq:convert_two_to_one}, in the one-sided case $q_{3}=q_{2}=q$ and $q_{1}=0$. Hence, node 1 adopts at the same rate in both cases, but nodes 2 and 3 only adopt in the one-sided case. Therefore $f_{{\rm circle}}^{\rm 1-sided}(t)$ is larger than $f_{{\rm circle}}^{\rm 2-sided}(t)$ for all $t$.
	\end{enumerate}	
\end{proof}

{\rev Intuitively, the overall impact of an edge depends not only on its own weight, but also on the node that it originates from. Thus, generally speaking, an edge that originates from a node with $p_i=0$ has a weaker effect than an equal-weight edge that originates from a node with $p_i>0$. Hence, one can utilize this insight to construct networks for which $f_{{\rm circle}}^{\rm 1-sided}(t)$ is higher or lower than $f_{{\rm circle}}^{\rm 2-sided}(t)$.}
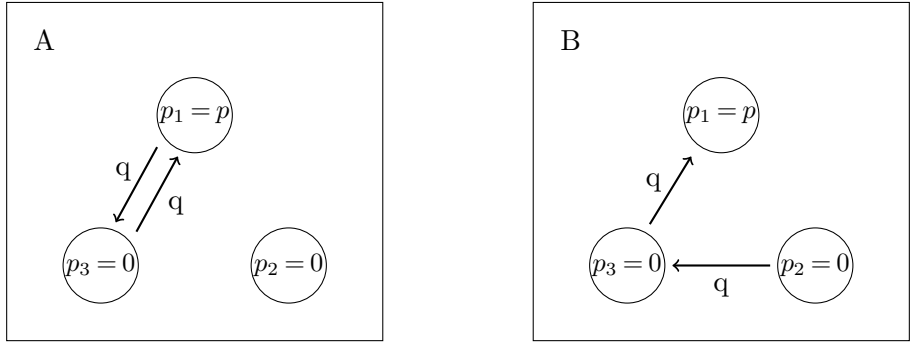
\begin{figure}[h]
	\centering
	\begin{tikzpicture}
		\draw (-6,-1) rectangle (-1,3.5);
		\draw (1,3.5) rectangle (6,-1);
		\draw (-3.5,2) circle (0.5cm) node {\small $p_1=p$};
		\draw[->, thick] (-4.0,1.575)--(-4.55,0.575);
		\draw[->, thick] (-4.275,0.45)--(-3.725,1.45);
		\draw (-4.75,0) circle (0.5cm) node {\small $p_3=0$};
		\draw (-5.5,3) node {A};
		\draw (1.5,3) node {B};
		
		\draw (-2.25,0) circle (0.5cm) node {\small $p_2=0$};
		
		\draw (-4.45,1.25) node {q};
		\draw (-3.75,0.8) node {q};
		
		\draw (3.5,2) circle (0.5cm) node {\small $p_1=p$};
		\draw[->, thick] (4.15,0)--(2.85,0);
		\draw[->, thick] (2.55,0.55)--(3.1,1.45);
		
		\draw (4.75,0) circle (0.5cm) node {\small $p_2=0$};
		\draw (2.25,0) circle (0.5cm) node {\small $p_3=0$};
		
		\draw (3.5,-0.3) node {q};
		\draw (2.6,1.1) node {q};
		
	\end{tikzpicture}
	\caption{Heterogeneous networks that satisfy~\eqref{eq:convert_two_to_one} for which $f_{\rm circle}^{{\rm 2-sided}}(t)>f_{\rm circle}^{\rm 1-sided}(t)$. A)~Two-sided network. B)~One-sided network.}
	\label{subfig:network_1}
\end{figure}

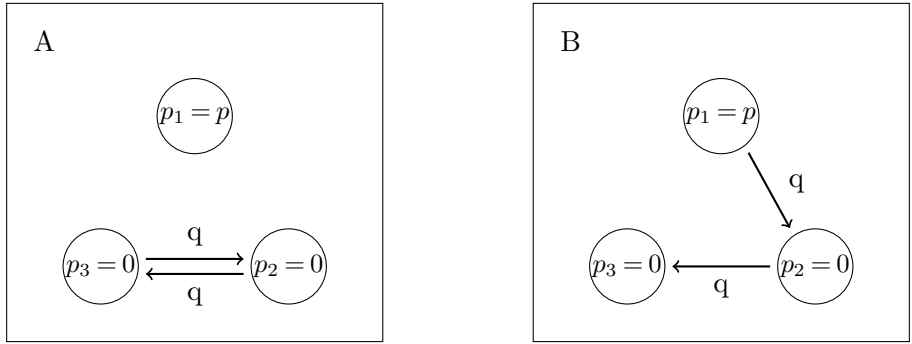
\begin{figure}[h]
	\centering
	\begin{tikzpicture}
		\draw (-6,-1) rectangle (-1,3.5);
		\draw (1,3.5) rectangle (6,-1);
		\draw (-3.5,2) circle (0.5cm) node {\small $p_1=p$};
		\draw[->, thick] (-2.85,-0.1)--(-4.15,-0.1);
		\draw[->, thick] (-4.15,0.1)--(-2.85,0.1);
		\draw (-4.75,0) circle (0.5cm) node {\small $p_3=0$};
		\draw (-5.5,3) node {A};
		\draw (1.5,3) node {B};
		
		\draw (-2.25,0) circle (0.5cm) node {\small $p_2=0$};
		\draw (-3.5,-0.4) node {q};
		\draw (-3.5,0.4) node {q};
		
		\draw (3.5,2) circle (0.5cm) node {\small $p_1=p$};
		\draw[->, thick] (4.15,0)--(2.85,0);
		\draw[->, thick] (3.8625,1.5125)--(4.4125,0.5125);
		
		\draw (4.75,0) circle (0.5cm) node {\small $p_2=0$};
		\draw (2.25,0) circle (0.5cm) node {\small $p_3=0$};
		
		\draw (3.5,-0.3) node {q};
		\draw (4.5,1.1) node {q};
		
	\end{tikzpicture}
	\caption{Same as Figure~\ref{subfig:network_1} with heterogeneous networks for which $f_{\rm circle}^{{\rm 2-sided}}(t)<f_{\rm circle}^{\rm 1-sided}(t)$.} 
	\label{subfig:network_2}
\end{figure}


\section{D-dimensional homogeneous Cartesian networks.}
\label{sec:cart}

Let $f_D(t;p,q)$ denote the fraction of adopters on the infinite $D$-dimensional Cartesian {\em  homogeneous} network, where nodes are labeled by their $D$-dimensional coordinate vector ${\bf i}=(i_1,\dots,i_D)$,
\begin{equation}
	\label{eq:cart_hom}
	p_{\bf i}\equiv p,\qquad q_{{\bf i},{\bf j}}=\begin{cases}
		\frac{q}{2D},&\qquad {\rm if}\ \|{\bf i}-{\bf j}\|_1=1,\\
		0, &\qquad {\rm otherwise,}
	\end{cases}\qquad {\bf i},{\bf j}\in \mathbb{Z}^{D},
\end{equation}
and $\|{\bf i}-{\bf j}\|_1:=\sum_{k=1}^{D}|i_k-j_k|$. Thus, each node can be influenced by its $2D$ nearest neighbors at the rate of~$\frac{q}{2D}$.  See,  e.g., Fig.~\ref{fig:2D_grid} for $M=2$.

\begin{figure} [ht!]
	\centering
	\scalebox{.6}{
		\begin{tikzpicture}
			\draw (-4,-4) rectangle (4,4);
			\draw[black,fill=black] (-3.7,0) circle (.3ex);
			\draw[black,fill=black] (-3.4,0) circle (.3ex);
			\draw[black,fill=black] (-3.1,0) circle (.3ex);
			\draw[black,fill=black] (-3.7,-1.7) circle (.3ex);
			\draw[black,fill=black] (-3.4,-1.7) circle (.3ex);
			\draw[black,fill=black] (-3.1,-1.7) circle (.3ex);
			\draw[black,fill=black] (-3.7,1.7) circle (.3ex);
			\draw[black,fill=black] (-3.4,1.7) circle (.3ex);
			\draw[black,fill=black] (-3.1,1.7) circle (.3ex);
			\draw[black,fill=black] (3.7,-1.7) circle (.3ex);
			\draw[black,fill=black] (3.4,-1.7) circle (.3ex);
			\draw[black,fill=black] (3.1,-1.7) circle (.3ex);
			\draw[black,fill=black] (3.7,1.7) circle (.3ex);
			\draw[black,fill=black] (3.4,1.7) circle (.3ex);
			\draw[black,fill=black] (3.1,1.7) circle (.3ex);
			\draw[black,fill=black] (3.7,1.7) circle (.3ex);
			\draw[black,fill=black] (3.4,1.7) circle (.3ex);
			\draw[black,fill=black] (3.1,1.7) circle (.3ex);
			\draw[black,fill=black] (0,3.1) circle (.3ex);
			\draw[black,fill=black] (0,3.4) circle (.3ex);
			\draw[black,fill=black] (0,3.7) circle (.3ex);
			\draw[black,fill=black] (0,-3.1) circle (.3ex);
			\draw[black,fill=black] (0,-3.4) circle (.3ex);
			\draw[black,fill=black] (0,-3.7) circle (.3ex);
			\draw[black,fill=black] (-1.7,3.1) circle (.3ex);
			\draw[black,fill=black] (-1.7,3.4) circle (.3ex);
			\draw[black,fill=black] (-1.7,3.7) circle (.3ex);
			\draw[black,fill=black] (1.7,3.1) circle (.3ex);
			\draw[black,fill=black] (1.7,3.4) circle (.3ex);
			\draw[black,fill=black] (1.7,3.7) circle (.3ex);
			\draw[black,fill=black] (1.7,-3.1) circle (.3ex);
			\draw[black,fill=black] (1.7,-3.4) circle (.3ex);
			\draw[black,fill=black] (1.7,-3.7) circle (.3ex);
			\draw[black,fill=black] (-1.7,-3.1) circle (.3ex);
			\draw[black,fill=black] (-1.7,-3.4) circle (.3ex);
			\draw[black,fill=black] (-1.7,-3.7) circle (.3ex);
			\draw[<->, thick] (-2.2,0)--(-2.9,0);
			\draw[<->, thick] (-2.9,0)--(-2.2,0);
			\draw[<->, thick](-2.2,1.7)--(-2.9,1.7);
			\draw[<->, thick] (-2.9,1.7)--(-2.2,1.7);
			\draw[<->, thick](-2.2,-1.7)--(-2.9,-1.7);
			\draw[<->, thick] (-2.9,-1.7)--(-2.2,-1.7);
			\draw[black] (-1.7,0) circle (0.5cm) node {p};
			\draw[<->, thick](-1.2,0)--(-0.5,0);
			\draw[<->, thick] (-0.5,0)--(-1.2,0);
			\draw[<->, thick] (-1.2,1.7)--(-0.5,1.7);
			\draw[<->, thick] (-0.5,1.7)--(-1.2,1.7);
			\draw[<->, thick] (-1.2,-1.7)--(-0.5,-1.7);
			\draw[<->, thick] (-0.5,-1.7)--(-1.2,-1.7);
			\draw[black] (0,0) circle (0.5cm) node {p};
			\draw[<->, thick] (0.5,0)--(1.2,0);
			\draw[<->, thick] (1.2,0)--(0.5,0);
			\draw[<->, thick] (0.5,1.7)--(1.2,1.7);
			\draw[<->, thick] (1.2,1.7)--(0.5,1.7);
			\draw[<->, thick] (0.5,-1.7)--(1.2,-1.7);
			\draw[<->, thick] (1.2,-1.7)--(0.5,-1.7);
			\draw[black] (1.7,0) circle (0.5cm) node {p};
			\draw[<->, thick] (2.2,0)--(2.9,0);
			\draw[<->, thick] (2.9,0)--(2.2,0);
			\draw[<->, thick] (2.2,1.7)--(2.9,1.7);
			\draw[<->, thick] (2.9,1.7)--(2.2,1.7);
			\draw[<->, thick] (2.2,-1.7)--(2.9,-1.7);
			\draw[<->, thick] (2.9,-1.7)--(2.2,-1.7);
			\draw[black,fill=black] (3.7,0) circle (.3ex);
			\draw[black,fill=black] (3.4,0) circle (.3ex);
			\draw[black,fill=black] (3.1,0) circle (.3ex);
			\draw[black] (0,1.7) circle (0.5cm) node {p};
			\draw[black] (0,-1.7) circle (0.5cm) node {p};
			\draw[black] (1.7,1.7) circle (0.5cm) node {p};
			\draw[black] (-1.7,-1.7) circle (0.5cm) node {p};
			\draw[black] (1.7,-1.7) circle (0.5cm) node {p};
			\draw[black] (-1.7,1.7) circle (0.5cm) node {p};
			\draw[<->, thick] (-1.7,2.2)--(-1.7,2.9);
			\draw[<->, thick] (-1.7,2.9)--(-1.7,2.2);
			\draw[<->, thick] (1.7,2.2)--(1.7,2.9);
			\draw[<->, thick] (1.7,2.9)--(1.7,2.2);
			\draw[<->, thick] (0,2.2)--(0,2.9);
			\draw[<->, thick] (0,2.9)--(0,2.2);
			
			\draw[<->, thick] (-1.7,-2.2)--(-1.7,-2.9);
			\draw[<->, thick] (-1.7,-2.9)--(-1.7,-2.2);
			\draw[<->, thick] (1.7,-2.2)--(1.7,-2.9);
			\draw[<->, thick] (1.7,-2.9)--(1.7,-2.2);
			\draw[<->, thick] (0,-2.2)--(0,-2.9);
			\draw[<->, thick] (0,-2.9)--(0,-2.2);
			
			\draw[<->, thick] (-1.7,0.5)--(-1.7,1.2);
			\draw[<->, thick] (-1.7,1.2)--(-1.7,0.5);
			\draw[<->, thick] (1.7,0.5)--(1.7,1.2);
			\draw[<->, thick] (1.7,1.2)--(1.7,0.5);
			\draw[<->, thick] (0,0.5)--(0,1.2);
			\draw[<->, thick] (0,1.2)--(0,0.5);
			
			\draw[<->, thick] (-1.7,-0.5)--(-1.7,-1.2);
			\draw[<->, thick] (-1.7,-1.2)--(-1.7,-0.5);
			\draw[<->, thick] (1.7,-0.5)--(1.7,-1.2);
			\draw[<->, thick] (1.7,-1.2)--(1.7,-0.5);
			\draw[<->, thick] (0,-0.5)--(0,-1.2);
			\draw[<->, thick] (0,-1.2)--(0,-0.5);
			
		\end{tikzpicture}
	}
	\caption{An infinite 2-dimensional homogeneous Cartesian  network. Each node has external influence of~$p$, and is influenced by its four nearest-neighbors at internal influence rates of~$\frac{q}{4}$.}
	\label{fig:2D_grid}
\end{figure}
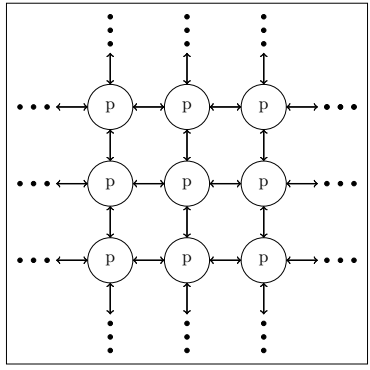

In~\cite{OR-10}, Fibich and Gibori conjectured that for any $p,q>0$, $$f_1(t;p,q)<f_2(t;p,q)<\dots<f_{\rm Bass}(t;p,q), \qquad 0<t<\infty.$$ So far, this conjecture has remained open. We now prove this conjecture for the initial dynamics:

\begin{lemma}
	\label{cor:dimension_dominance}
	Consider the $D$-dimensional Cartesian networks~\eqref{eq:cart_hom}. Then 
	\begin{equation*}
		f_{1D}(t;p,q)<f_{2D}(t;p,q)<f_{3D}(t;p,q)<\dots<f_{\rm Bass}(t;p,q), \qquad 0<t\ll 1.
	\end{equation*}
\end{lemma}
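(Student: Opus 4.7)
The plan is to compare the Taylor expansions of $f_D(t)$ and $f_{\rm Bass}(t)$ at $t=0$ and show that while these expansions agree through $O(t^2)$, the $t^3$ coefficients are strictly ordered in $D$. Because every node of the $D$-dimensional Cartesian network~\eqref{eq:cart_hom} satisfies $q_j=q$, Corollary~\ref{cor:n'''(0)_mild} gives $f_D'(0)=p$ and $f_D''(0)=p(q-p)$, independent of $D$, and Corollary~\ref{cor:initial_hom} yields the same two values for $f_{\rm Bass}$. So everything hinges on $f_D'''(0)$.

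Corollary~\ref{cor:n'''(0)_mild}'s formula for $f'''(0)$ was derived for complete networks (the $(M-1)^2$ factors make this explicit), so it does not directly apply to the sparse Cartesian structure. I would instead return to the master equations~\eqref{eqs:masterHeterofc} and differentiate them twice at $t=0$. By~\eqref{eq:s'_0} with homogeneous $p$, $[S^j]'(0)=-p$ and $[S^j,S^k]'(0)=-2p$; differentiating the equations for $[S^j]$ and $[S^j,S^k]$ at $t=0$ yields
\begin{equation*}
[S^j]''(0)=p(p-q_j), \qquad [S^j,S^k]''(0)=4p^2-p\sum_{l\neq j,k}\bigl(q_{l,j}+q_{l,k}\bigr),
\end{equation*}
and then
\begin{equation*}
[S^j]'''(0)=-(p+q_j)[S^j]''(0)+\sum_{k\neq j}q_{k,j}[S^j,S^k]''(0).
\end{equation*}
For the Cartesian case, $q_{k,j}=q_{j,k}=q/(2D)$ when $j\sim k$, and $\sum_{l\neq j,k}(q_{l,j}+q_{l,k})=2q-q/D$ for each neighbor pair. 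Summing over the $2D$ neighbors of $j$ and using translation invariance, a short calculation gives
\begin{equation*}
f_D'''(0)=p^3-4p^2 q+pq^2\!\left(1-\tfrac{1}{D}\right),
\end{equation*}
whereas $f_{\rm Bass}'''(0)=p^3-4p^2q+pq^2$ follows from Corollary~\ref{cor:initial_hom} in the limit $M\to\infty$.

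Since $1-1/D$ is strictly increasing in $D$ with limit $1$, we obtain $f_D'''(0)<f_{D+1}'''(0)<f_{\rm Bass}'''(0)$. Combined with the agreement of the zeroth, first, and second derivatives, Taylor expansion at $t=0$ gives
\begin{equation*}
f_{D+1}(t)-f_D(t)=\frac{pq^2\,t^3}{6\,D(D+1)}+O(t^4), \qquad f_{\rm Bass}(t)-f_D(t)=\frac{pq^2\,t^3}{6\,D}+O(t^4),
\end{equation*}
both strictly positive for $0<t\ll 1$, which yields the claimed chain of inequalities. The main obstacle is the explicit derivation of $f_D'''(0)$, which requires computing $[S^j,S^k]''(0)$ for neighboring nodes directly from the master equations rather than appealing to Corollary~\ref{cor:n'''(0)_mild}; everything else is routine Taylor analysis.
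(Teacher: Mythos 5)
Your proposal is correct and takes essentially the same approach as the paper: the paper's proof also reduces the lemma to matching $f'(0)$ and $f''(0)$ and comparing third derivatives, computing $f_D'''(0)=p\bigl(p^2-4pq+\tfrac{D-1}{D}q^2\bigr)$ (identical to your $p^3-4p^2q+pq^2\bigl(1-\tfrac{1}{D}\bigr)$) by differentiating the master equations twice at $t=0$ on the infinite lattice, see eq.~\eqref{eq:der_dimension_initial}. The only difference is bookkeeping: the paper works with the translation-invariant configuration probabilities $[S_2]$, $[S_3^-]$, $[S_3^L]$, whereas you compute $[S^j,S^k]''(0)=4p^2-p\sum_{l\neq j,k}(q_{l,j}+q_{l,k})$ directly, which amounts to the same calculation since all triple configurations enter with the same first derivative $-3p$ at $t=0$.
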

\begin{proof}
	Since $f_{\rm Bass}(t;p,q)=\lim\limits_{M\to\infty}f^{\rm complete}(t;p,q,M)$, see Section~\ref{subsec:homogeneous}, then by~\eqref{eq:initial_hom},
	\begin{equation*}
		f'_{\rm Bass}(0)=p,\qquad f''_{\rm Bass}(0)=p(q-p),\qquad
		f'''_{\rm Bass}(0)=\lim\limits_{M\to\infty}p\left(p^2-4pq+\frac{M-3}{M-1}q^2\right)=p\left(p^2-4pq+q^2\right).
	\end{equation*}
	In~\ref{app:lem:dimension_initial} we show that for any dimension $D$,
	\begin{equation}
		\label{eq:der_dimension_initial}
		f_D'(0)=p,\qquad f_D''(0)=p(q-p), \qquad f_D'''(0)=p\left(p^2-4pq+\frac{D-1}{D}q^2\right).
	\end{equation}
	Therefore, the result follows.	
\end{proof}

\section{Distribution of heterogeneity.}
\label{sec:effect_of_order}

When the network is not complete, the effect of heterogeneity on $f(t)$ depends also on the relative locations of the heterogeneous nodes in the network. To illustrate this, let $A$ be a one-sided circle with $M$ nodes which is homogeneous in $q$ and heterogeneous in $p$, so that
\begin{equation}
	\label{eq:p_distribution_A}
	q_i^A=q, \qquad
	p_i^A=
	\begin{cases}
		p_1,& 1\leq i\leq \frac{M}{2},\\
		p_2,& \frac{M}{2}<i\leq M,
	\end{cases} \qquad i=1,\dots,M.
\end{equation}
Let $B$ be a one-sided circle with $M$ nodes which are homogeneous in $q$ and heterogeneous in $p$, so that
\begin{equation}
	\label{eq:p_distribution_B}
	q_i^B=q, \qquad
	p_i^B=
	\begin{cases}
		p_1,& i \ {\rm odd,}\\
		p_2,& i \ {\rm even,}
	\end{cases} \qquad i=1,\dots,M.
\end{equation}
Thus, $A$ and $B$ have exactly the same heterogeneous nodes $\{(p_i,q_i)\}_{i=1}^M$, but their relative locations along the circle are different. 

We can explicitly compute the aggregate adoptions in $A$ and $B$ as $M\to\infty$:
\begin{lemma}
	\label{lem:order}
	Consider the one-sided circles~\eqref{eq:p_distribution_A} and~\eqref{eq:p_distribution_B}. Then

	\begin{equation}
		\label{eq:f_A}
		\lim\limits_{M\to\infty}f_A^{\rm het}(t)=\frac{f_{\rm 1D}(t;p_1,q)+f_{\rm 1D}(t;p_2,q)}{2},
	\end{equation}
	where $f_{\rm 1D}$ is given by~\eqref{eq:f_1D}, and
	\begin{subequations}
		\label{eqs:f_B}
		\begin{equation}
			\label{eq:f_B}
			\lim\limits_{M\to\infty}f_B^{\rm het}(t)=1-\frac{1}{2q}e^{-qt}(\dot{V_1}(t)+\dot{U_1}(t)),
		\end{equation}
		where $U_1$ and $V_1$ are the solution of
		\begin{equation}
			\label{eq:v1_system}
			\begin{aligned}
				\dot{U_1}(t)&=qe^{-p_1t}V_1(t), \qquad U_1(0)=1,\\
				\dot{V_1}(t)&=qe^{-p_2t}U_1(t), \qquad V_1(0)=1.
			\end{aligned}
		\end{equation}
		
	\end{subequations}
\end{lemma}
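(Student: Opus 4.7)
The plan is to exploit Lemma~\ref{lem:masterHetero}: for the one-sided circle, the master equations for $\{[S_k^j](t)\}_{k=1}^M$ are decoupled across~$j$ and only involve the $p$-values of the $k$ ancestors of~$j$ along the chain. For network~$A$, consider a node~$j$ with $p_j=p_1$ that is at distance more than $k^*$ from the boundary between the two blocks. Then all its first $k^*$ ancestors also have $p=p_1$, so $[S_1^j](t),\ldots,[S_{k^*}^j](t)$ satisfy exactly the same ODE system as on the \emph{homogeneous} one-sided chain with parameters $(p_1,q)$. Truncating at~$k^*$ introduces an error of size $[S_{k^*+1}^j](t)\le e^{-p_{\min}(k^*+1)t}$ (each additional ancestor contributes at least this factor from its own external adoption), while only $O(k^*)$ of the~$M$ nodes fail the distance condition. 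Hence, in the limit $M\to\infty$, each $p_1$-node contributes $1-f_{\rm 1D}(t;p_1,q)$ by~\eqref{eq:f_1D}, and each $p_2$-node contributes $1-f_{\rm 1D}(t;p_2,q)$. Averaging via~\eqref{eq:expectedHeteroOneSidedCircle} over the $M/2$ nodes of each type yields~\eqref{eq:f_A}.

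For network~$B$, the ancestors of any node alternate between $p_1$- and $p_2$-values. To exploit this, substitute
\begin{equation*}
	[S_k^j](t)=\exp\!\Big(-\Big({\textstyle\sum_{i=j-k+1}^{j} p_i}\Big)t-qt\Big)\,\psi_k^j(t),\qquad \psi_k^j(0)=1,
\end{equation*}
into~\eqref{eq:masterHetero}. The linear decay terms cancel, leaving the recursion
\begin{equation*}
	\dot{\psi}_k^j(t)=q\,e^{-p_{j-k}t}\,\psi_{k+1}^j(t).
\end{equation*}
For $j$ odd and $k=1,2,3,\ldots$ the coefficient $p_{j-k}$ alternates as $p_2,p_1,p_2,\ldots$, so the system is satisfied by the $k$-independent choice $\psi_{2m-1}^j\equiv V_1(t)$, $\psi_{2m}^j\equiv U_1(t)$, where $(U_1,V_1)$ solve exactly~\eqref{eq:v1_system}. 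Hence $[S_1^j](t)=e^{-(p_1+q)t}V_1(t)$ for $j$ odd. The analogous argument with the roles of $p_1,p_2$ and $U_1,V_1$ interchanged gives $[S_1^j](t)=e^{-(p_2+q)t}U_1(t)$ for $j$ even. Substituting into~\eqref{eq:expectedHeteroOneSidedCircle} and using $qe^{-p_1 t}V_1=\dot U_1$, $qe^{-p_2 t}U_1=\dot V_1$ to eliminate the exponential prefactors produces~\eqref{eqs:f_B}.

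The main technical obstacle is making the $M\to\infty$ limit rigorous: on a finite circle the ancestor chain wraps around after~$M$ steps, so the ODE satisfied by $[S_k^j]$ only matches the one on the infinite chain while $k<M$. One must show that truncation at some $k^*\ll M$ is harmless, which follows from the uniform bound $[S_k^j](t)\le e^{-p_{\min} kt}$ obtained by noting that non-adoption of the entire chain requires no node to have adopted externally. Once this uniform control is established on bounded time intervals, passing to the limit in the closed-form expressions derived above yields both~\eqref{eq:f_A} and~\eqref{eqs:f_B}.
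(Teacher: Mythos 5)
Your proposal is correct and takes essentially the same route as the paper's proof: for circle~$A$, the same localization argument that nodes far from the interface see a homogeneous environment (so the limit is the average of two copies of~\eqref{eq:f_1D}), and for circle~$B$, the same reduction of the master-equation hierarchy~\eqref{eq:masterHetero} to the $2\times 2$ system~\eqref{eq:v1_system} by exploiting the period-2 structure of the ancestor chain. Your single substitution $[S_k^j]=e^{-(\sum_{i=j-k+1}^{j}p_i+q)t}\psi_k^j$, which collapses the hierarchy to $\dot{\psi}_k^j=qe^{-p_{j-k}t}\psi_{k+1}^j$ and handles odd and even~$j$ with one pair $(U_1,V_1)$, is a mild streamlining of the paper's two-stage substitution through $R_1,T_1$ (and its separate system $(U_2,V_2)$ with the identification $V_1\equiv U_2$), and your explicit truncation bound $[S_k^j](t)\le e^{-p_{\min}kt}$ makes rigorous a passage to the infinite system that the paper treats informally.
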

\begin{proof}
	See~\ref{app:lem:order}.
\end{proof}

\begin{figure}[ht!]
	\centering
	\begin{minipage}{.3\textwidth}
		\centering
		\begin{tikzpicture}
			\node[anchor=south west, inner sep =0] at (0,0) {\includegraphics[width=\textwidth]{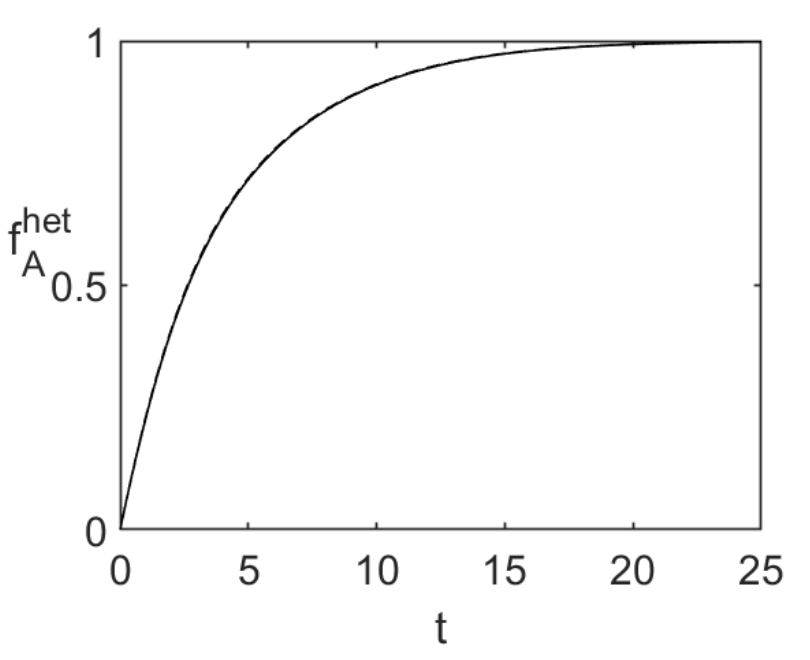}};
			\draw (4,3) node{A};
		\end{tikzpicture}
	\end{minipage}
	\begin{minipage}{.3\textwidth}
		\centering
		\begin{tikzpicture}
			\node[anchor=south west, inner sep =0] at (2,-1.2) {\includegraphics[width=\textwidth]{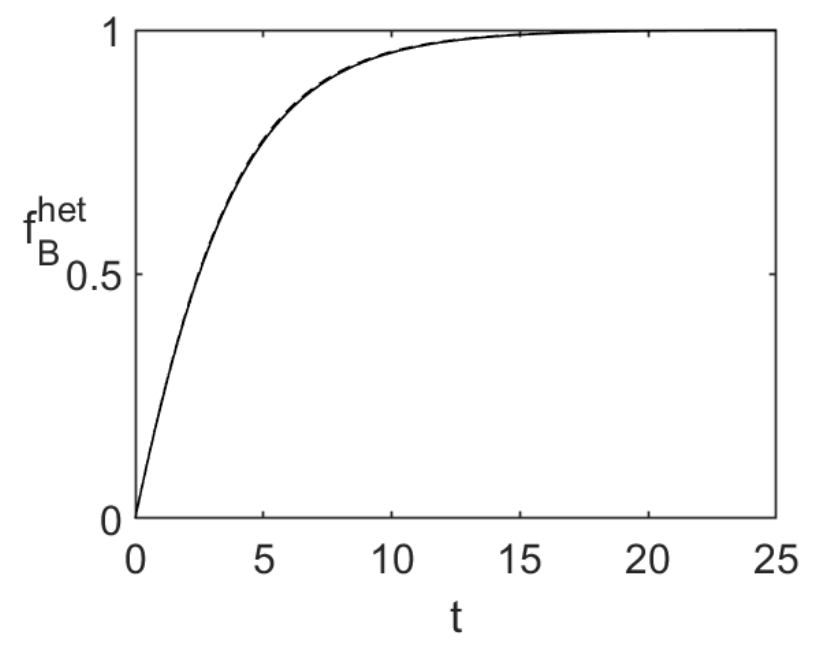}};
			\draw (6,1.8) node{B};
		\end{tikzpicture}
	\end{minipage}
	\begin{minipage}{.3\textwidth}
		\centering
		\begin{tikzpicture}
			\node[anchor=south west, inner sep =0] at (2,-1.2)
			{\includegraphics[width=\textwidth]{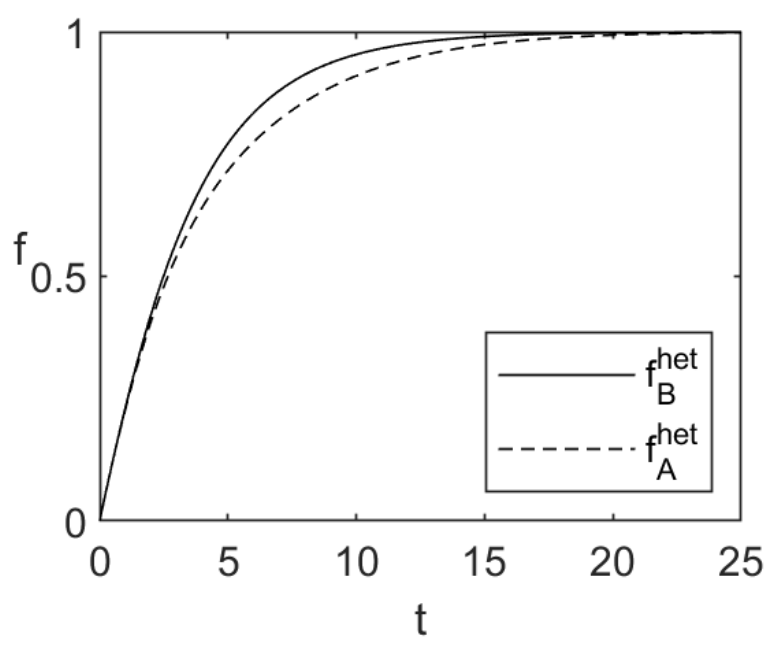}};
			\draw (6,1.8) node{C};
		\end{tikzpicture}
	\end{minipage}
	\caption{A)~Fraction of adopters $f_A$ as a function of time in circle~$A$ with $M=1000$, $p_1=0.4$, $p_2=0.1$, and $q=0.2$. Solid line is the explicit expression~\eqref{eq:f_A} and dashed line is the average of $10^4$   simulations of~\eqref{eq:one_sided_model}. The 2 curves are nearly indistinguishable. B)~Same for $f_B$. Solid line is the explicit expression~\eqref{eqs:f_B}. C)~Comparison of $f_A$~(dashes) and $f_B$~(solid).
	}
	\label{fig:order}
\end{figure}

Figure~\ref{fig:order}A-B confirms the results of Lemma~\ref{lem:order} numerically. In addition, Figure~\ref{fig:order}C shows that
$$
\lim\limits_{M\to \infty}f_A^{\rm het}(t; p_1, p_2, q)<\lim\limits_{M\to \infty}f_B^{\rm het}(t; p_1,p_2,q) 
, \qquad 0< t<\infty.
$$
Therefore, in particular, the effect of heterogeneity depends also on the locations of the heterogeneous nodes along the circle. {\rev The intuition behind this inequality is as follows. Assume without loss of generality that $p_1>p_2$. In both networks, the diffusion is limited by the rate at which the ``weak" $p_2$ nodes adopt the product. In circle~$A$ there is negligible interaction between the separate regions of the weak and strong nodes, and so the weak $p_2$ nodes adopt the product without any ``assistance" from the $p_1$ nodes. In circle~$B$, however, whenever a strong $p_1$ node adopts, it immediately exerts an internal influence on its adjacent weak node to adopt. Hence, the weak nodes adopt the product more quickly, and so the aggregate diffusion is faster.
	
	We can further consider the one-sided circles $A$, $B$, and $C$ with $q_i^{A,B,C}\equiv q$ and 
	\begin{equation*}
		p_i^{A}=\begin{cases}
			p_1,& 1\leq i<\frac{M}{3},\\
			p_2,& \frac{M}{3}\leq i <\frac{2M}{3},\\
			p_3,& \frac{2M}{3}\leq i\leq M,
		\end{cases}\qquad
		p_i^{B}=\begin{cases}
			p_1,& i\ \Mod3=1,\\
			p_2,& i\ \Mod3=2,\\
			p_3,& i\ \Mod3=0,
		\end{cases}\qquad
		p_i^{C}=\begin{cases}
			p_1,& i\ \Mod3=1,\\
			p_3,& i\ \Mod3=2,\\
			p_2,& i\ \Mod3=0,
		\end{cases}
	\end{equation*}
	where $p_1>p_2>p_3$. Following the previous arguments, we expect that the diffusion in circle~$A$ (three separate uniform regions) is slower than in circles~$B$~and~$C$ (alternating patterns). Moreover, the diffusion in $C$ is faster than in $B$, since the weakest $p_3$ nodes are directly influenced by the strongest $p_1$ nodes. Figure~\ref{fig:order_M3} confirms these predictions numerically.
	
	This naturally leads to the following question: For a given network structure, what is the optimal distribution of $\{p_i\}$ that maximizes the diffusion? Based on the above arguments, the weak nodes should be close to strong ones. A systematic study of this intriguing optimization problem however, is left for a future study.
}

\begin{figure}[ht!]
	\centering
	\begin{minipage}{.3\textwidth}
		\centering
		\begin{tikzpicture}
			\node[anchor=south west, inner sep =0] at (0,0) {\includegraphics[width=\textwidth]{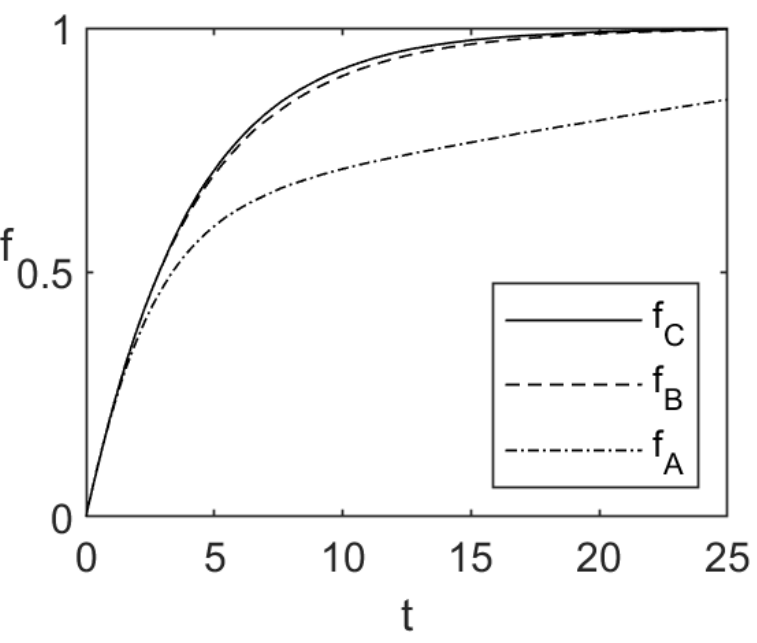}};
		\end{tikzpicture}
	\end{minipage}
	\caption{Comparison of $f_{A}$~(dash-dot), $f_{B}$~(dashes), and $f_{C}$~(solid), where $M=900$, $p_1=0.5$, $p_2=0.2$, $p_3=0.01$, and $q=0.2$.}
	\label{fig:order_M3}
\end{figure}

\section{Level of heterogeneity}
\label{sec:level}
{\rev
	Until now, we considered a dichotomous distinction of heterogeneity versus homogeneity. We now briefly consider the {\newrev quantitative effect of varying the level of heterogeneity on $f^{\rm hom}(t)-f^{\rm het}(t)$}. We consider vertex-transitive networks, i.e., networks that are structured exactly the same about any node (e.g., circles, infinite D-dimensional Cartesian networks, complete networks).
	\begin{lemma} [{\cite{Bass-SIR-analysis-17,PNAS-12}}]
		\label{lem:averaging_principal}
		Let $\varepsilon>0$ and $h_{j, p}$, $h_{j, q}\in \mathbb{R}$ for $j=1,\dots,M$, such that $\sum_{j=1}^{M}h_{j, p}=0$ and $\sum_{j=1}^{M}h_{j, q}=0$. Consider a vertex-transitive network with $M$ nodes which is heterogeneous in $p$ and mildly heterogeneous in $q$, see~\eqref{eq:q_mild_het}, i.e.,
		\begin{equation*}
			p_j(\varepsilon)=p(1+\varepsilon h_{j, p}), \qquad q_j(\varepsilon)=q(1+\varepsilon h_{j, q}), \qquad j=1,\dots, M.
		\end{equation*}
		Then, for $\varepsilon\ll 1$,
		\begin{equation}
			\label{eq:averaging_principal}
			\begin{aligned}
				f^{\rm het}(\varepsilon):&=f^{\rm het}(t;\{p_j(\varepsilon)\},\{q_j(\varepsilon)\})
				\\&=f^{\rm hom}(t;p,q)
				+\frac{\varepsilon^2}{2} \sum_{i=1}^{M}\sum_{j=1}^{M}
				\left(p^2 h_{i, p} h_{j, p} a_{i,j} +2pqh_{i, p} h_{j, q} b_{i,j} +q^2 h_{i, q} h_{j, q} c_{i,j}\right)+O(\varepsilon^3), 
			\end{aligned}
		\end{equation}
		where 
		$$
		a_{i,j}:=\frac{\partial^2f^{\rm het}}{\partial p_i\partial p_j}\bigg|_{\varepsilon=0}, \qquad
		b_{i,j}:=\frac{\partial^2f^{\rm het}}{\partial p_i\partial q_j}\bigg|_{\varepsilon=0}, \qquad
		c_{i,j}:=\frac{\partial^2f^{\rm het}}{\partial q_i\partial q_j}\bigg|_{\varepsilon=0}.
		$$
	\end{lemma}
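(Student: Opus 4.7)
The approach is to Taylor expand $f^{\rm het}(\varepsilon)$ in powers of $\varepsilon$ around $\varepsilon=0$, and then use the zero-sum conditions on $\{h_{j,p}\}$ and $\{h_{j,q}\}$ together with vertex-transitivity to eliminate the first-order term. First I would verify that $f^{\rm het}$ is smooth in the parameters $\{p_j\}$ and $\{q_j\}$: this follows from Lemma~\ref{lem:masterHeterofc}, since the master equations form a linear ODE system whose coefficients depend analytically on $\{p_j\}$ and $\{q_j\}$, so the solutions, and hence $f^{\rm het}$ via~\eqref{eq:expectedHeterofc}, are smooth (in fact analytic) in these parameters.

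Applying the chain rule to $\varepsilon \mapsto f^{\rm het}(t;\{p_j(\varepsilon)\},\{q_j(\varepsilon)\})$ with $\frac{dp_j}{d\varepsilon}=p\,h_{j,p}$ and $\frac{dq_j}{d\varepsilon}=q\,h_{j,q}$ gives
\begin{equation*}
\frac{d f^{\rm het}}{d\varepsilon}\bigg|_{\varepsilon=0}=p\sum_{i=1}^{M}h_{i,p}\,\frac{\partial f^{\rm het}}{\partial p_i}\bigg|_{\varepsilon=0}+q\sum_{i=1}^{M}h_{i,q}\,\frac{\partial f^{\rm het}}{\partial q_i}\bigg|_{\varepsilon=0}.
\end{equation*}
The key observation is that since the unperturbed network is vertex-transitive and homogeneous, the partial derivatives $\frac{\partial f^{\rm het}}{\partial p_i}|_{\varepsilon=0}$ are independent of $i$ (equal to some common value $\alpha(t)$), and similarly $\frac{\partial f^{\rm het}}{\partial q_i}|_{\varepsilon=0}=\beta(t)$ for all $i$. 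Combining this with $\sum_i h_{i,p}=\sum_i h_{i,q}=0$ shows that $\frac{d f^{\rm het}}{d\varepsilon}|_{\varepsilon=0}=p\alpha(t)\cdot 0+q\beta(t)\cdot 0=0$, so the first-order term vanishes.

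For the second-order term, a second application of the chain rule yields
\begin{equation*}
\frac{d^2 f^{\rm het}}{d\varepsilon^2}\bigg|_{\varepsilon=0}=\sum_{i,j=1}^{M}\bigl(p^2 h_{i,p}h_{j,p}\,a_{i,j}+2pq\,h_{i,p}h_{j,q}\,b_{i,j}+q^2 h_{i,q}h_{j,q}\,c_{i,j}\bigr),
\end{equation*}
where $a_{i,j},b_{i,j},c_{i,j}$ are the second partials defined in the lemma; note that no linear contribution from $\frac{d^2 p_j}{d\varepsilon^2}=0=\frac{d^2 q_j}{d\varepsilon^2}$ appears. Substituting both terms into Taylor's theorem gives~\eqref{eq:averaging_principal} with remainder $O(\varepsilon^3)$, the remainder being controlled by the uniform boundedness of the third partials of $f^{\rm het}$ on a neighborhood of $\varepsilon=0$, which follows from the smoothness established in the first paragraph.

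The main obstacle is the vanishing of the first-order term, which rests crucially on vertex-transitivity. In fact, without vertex-transitivity one would only have $\frac{d f^{\rm het}}{d\varepsilon}|_{\varepsilon=0}=p\sum_i h_{i,p}\alpha_i(t)+q\sum_i h_{i,q}\beta_i(t)$, which need not vanish. Verifying that vertex-transitivity forces $\alpha_i\equiv\alpha$ and $\beta_i\equiv\beta$ can be done by noting that any automorphism $\sigma$ of the graph induces a relabeling under which the master equations~\eqref{eqs:masterHeterofc} are preserved when evaluated at the homogeneous point, so $[S^k]$ is independent of $k$ there, and more generally $\frac{\partial [S^k]}{\partial p_i}|_{\varepsilon=0}=\frac{\partial [S^{\sigma(k)}]}{\partial p_{\sigma(i)}}|_{\varepsilon=0}$; averaging over $k$ via~\eqref{eq:expectedHeterofc} then yields the required $i$-independence of $\alpha_i$, and similarly for $\beta_i$.
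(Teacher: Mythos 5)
Your proposal is correct and follows essentially the same route as the paper: smoothness of $f^{\rm het}$ via the master equations~\eqref{eqs:masterHeterofc}, vanishing of the $O(\varepsilon)$ term from vertex-transitivity combined with the zero-sum conditions on $\{h_{j,p}\}$ and $\{h_{j,q}\}$, and then a second-order Taylor expansion. The only difference is that where the paper verifies weak symmetry and then cites the averaging principle of~\cite{PNAS-12} as a black box, you unpack that step and prove the vanishing of the first derivative directly via the chain rule and the graph-automorphism identity $\frac{\partial [S^k]}{\partial p_i}\big|_{\varepsilon=0}=\frac{\partial [S^{\sigma(k)}]}{\partial p_{\sigma(i)}}\big|_{\varepsilon=0}$ --- a self-contained rendering of the same idea.
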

	\begin{proof}
		The proof is similar to~\cite{Bass-SIR-analysis-17,PNAS-12}, except that here we prove the smoothness of $f^{\rm het}$ using the novel master equations~\eqref{eqs:masterHeterofc}. By~\eqref{eq:expectedHeterofc}, $f^{\rm het}(t)$ is a linear combination of $\{[S^k](t)\}$. Since $\{[S^k](t)\}$ are solutions of the master equations~\eqref{eqs:masterHeterofc}, which are linear constant-coefficient ODEs, they depend smoothly on $\{p_j\}$ and $\{q_{i,j}\}$, and so $f^{\rm het}$ depends smoothly on $\varepsilon$. Furthermore, since the network is vertex transitive, $f(t;\{p_j\},\{q_j\})$ is weakly symmetric, i.e., $f(t;\{(p,\dots,p)+\eta_1{\bf e}_i\},\{(q,\dots,q)+\eta_2 {\bf e}_i\})$ does not depend on $1\leq i\leq M$ for any $p,q,\eta_1,\eta_2$, where ${\bf e}_i$ is the unit vector in the $i$th coordinate. Therefore, by the averaging principle~\cite{PNAS-12}, $\frac{\partial f}{\partial \varepsilon}\equiv 0$, and so relation~\eqref{eq:averaging_principal} holds.	
	\end{proof}
	
	\begin{figure}[ht!]
		\centering
		\begin{minipage}{.3\textwidth}
			\centering
			\begin{tikzpicture}
				\node[anchor=south west, inner sep =0] at (0,0) {\includegraphics[width=\textwidth]{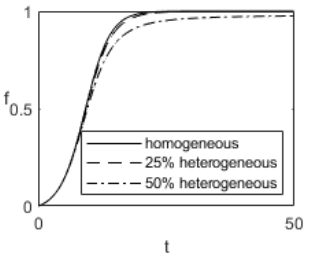}};
				\draw (4,3) node{A};
			\end{tikzpicture}
		\end{minipage}
		\begin{minipage}{.34\textwidth}
			\centering
			\begin{tikzpicture}
				\node[anchor=south west, inner sep =0] at (0,0) {\includegraphics[width=\textwidth]{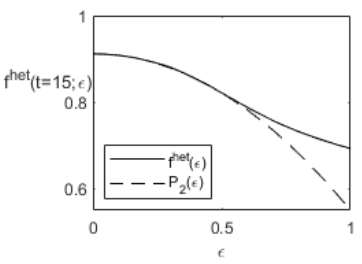}};
				\draw (4.5,3) node{B};
			\end{tikzpicture}
		\end{minipage}
		\caption{\rev A)~Fractional adoption in a complete homogeneous (solid) and mildly-heterogeneous in q networks with $\varepsilon=25\%$ (dashes) and $\varepsilon=50\%$ (dash-dot). Plots show the average of $10000$ simulations with $M=1000$, $p=0.01$, $q=0.4$, and where $\{h_{j, q}\}$ are generated by a standard normal distribution. B)~$f^{\rm het}(t=15;\varepsilon)$ as a function of $\varepsilon$ (solid). The fitted parabola is $P_2(\varepsilon)=0.9122-0.3616\varepsilon^2$ (dashes).}
		\label{fig:variance}
	\end{figure}
	
	Lemma~\ref{lem:averaging_principal} shows that the effect of heterogeneity increases smoothly and monotonically with the variance in $\{p_j\}$ and $\{q_j\}$, at least for a weak heterogeneity. {\em The effects of the variances of $\{p_j\}$ and $\{q_j\}$ however, are $O(\varepsilon^2)$ small, i.e., are much smaller than the effects of their means.} These conclusions also hold for any network structure for $t\ll 1$~(\ref{app:small_time}), and were observed numerically in~\cite{Bass-SIR-analysis-17, PNAS-12, OR-10, GLM-01}. 
	
	For example, consider a complete network which is mildly heterogeneous in q. Then $f^{\rm het}-f^{\rm hom}<0$ by Theorem~\ref{thm:compare_q}, and so a higher variance leads to  slower diffusion (Figure~\ref{fig:variance}A). This slowdown effect, however, is quite small, and barely noticeable for $\varepsilon=25\%$. The $O(\varepsilon^2)$ effect of heterogeneity holds not only for very small values of $\varepsilon$, but rather for $0\leq \varepsilon\leq 50\%$~(Figure~\ref{fig:variance}B). Therefore, although Lemma~\ref{lem:averaging_principal} is formally stated for small values of $\varepsilon$, in practice it holds for ``larger" values of $\varepsilon$.
	
}



\appendix 
\label{Appendix}
\renewcommand\thesection{Appendix~\Alph{section}}
\section{Proof of Lemma~\ref{lem:masterHeterofc}.}
\label{app:lem:masterHeterofc}
Let $(S^{m_1},\dots,S^{m_n},I^l)(t)$ denote the event that at time t, nodes $\{m_1,\dots,m_n\}$ are non-adopters and node $l_j$ is an adopter, where $1\leq n<M$, $m_i\in \{1,\dots,M\}$, $m_i\neq m_j$ if $i\neq j$, and $l_j \notin \left \lbrace m_1,\dots,m_n \right \rbrace$. Let $[S^{m_1},\dots,S^{m_n},I^{l_j}](t)$ denote the probability that such an event occurs.

The configuration $(S^{m_1},\dots,S^{m_n})$ cannot be created. It is destroyed when:
\begin{enumerate}[label=(\arabic*)]
	\item 
	Any of $n$ non-adopters adopts the product through external influence, which happens at rate $p_{m_i}$ for the $m_i^{{\rm th}}$ non-adopter.
	\item 
	Any of the $n$ non adopters in $(S^{m_1},\dots,S^{m_n},I^{l_j})$ adopts due to external influence by node ${l_j}$ for all $n+1\leq j\leq M$, which happens at rate $q_{l_j,m_i}$.
\end{enumerate}
Therefore,
\begin{equation*}
	\label{eq:premasterHeterofc}
	\frac{d}{dt}[S^{m_1},\dots,S^{m_n}](t)= -\left(\sum_{i=1}^{n}p_{m_i}\right)[S^{m_1},\dots,S^{m_n}](t)-\sum_{j=n+1}^{M}\left(\sum_{i=1}^{n}q_{l_j,m_i}\right)[S^{m_1},\dots,S^{m_n},I^{l_j}](t).
\end{equation*}
By the total probability theorem,
\begin{equation*}
	\label{eq:IS->S}
	[S^{m_1},\dots,S^{m_n},I^{l_j}](t)=[S^{m_1},\dots,S^{m_n}](t)-[S^{m_1},\dots,S^{m_n},S^{l_j}](t).
\end{equation*}
Combining these two relations gives~\eqref{eq:masterHeterofc}. 

The configuration $(S^1,S^2,\dots,S^M)$ cannot be created. It is destroyed when any of the ``$S$''s turns into an ``$I$'', which happens at the rate $p_i$ for the $i^{{\rm th}}$ node, hence giving~\eqref{eq:masterHeterofcM}.

The initial conditions~\eqref{eq:masterHeterofcInitial} follow from~\eqref{eq:general_initial}.

\section{Derivation of eq.~\eqref{eq:master_sol} for~$[S^k](t)$.}
\label{app:S^k}

Substituting \eqref{eq:s_M sol} in~\eqref{eq:masterHeterofc} with $n=M-1$ and $l_1=\{1,\dots,M\}\backslash\{m_1,\dots,m_{M-1}\}$ gives
\begin{equation*}
	\frac{d}{dt}[S^{m_1},\dots,S^{m_{M-1}}](t)= -\left(\sum_{i=1}^{M-1}\left(p_{m_i}+ q_{{l_1},m_i}\right)\right)[S^{m_1},\dots,S^{m_{M-1}}](t) +\left(\sum_{i=1}^{M-1}q_{{l_1},m_i}\right)e^{-\left(\sum_{j=1}^{M}p_j\right)t}.
\end{equation*}
The solution of this ODE with the initial condition~\eqref{eq:masterHeterofcInitial} is
\begin{equation*}
	[S^{m_1},\dots,S^{m_{M-1}}](t)=c_0e^{-\left(\sum_{j=1}^{M}p_j\right)t}+c_1e^{-\left(\sum_{i=1}^{M-1}\left(p_{m_i}+ q_{{l_1},m_i}\right)\right)t},
\end{equation*}
where
$c_0=\frac{\sum_{i=1}^{M-1}q_{{l_1},m_i}}{\sum_{i=1}^{M-1}q_{{l_1},m_i}-p_{l_1}}$ and $c_1=-\frac{p_{l_1}}{\sum_{i=1}^{M-1}q_{{l_1},m_i}-p_{l_1}}$.
As this process is repeated for $n=M-2,M-3,\dots,1$, at each stage we solve an ODE of the form
\begin{equation*}
	y'+ay=\sum_{j}c_je^{-b_j}t, \qquad a:=\sum_{i=1}^{n}p_{m_i}+\sum_{j=n+1}^{M}\sum_{i=1}^{n} q_{l_j,m_i}.
\end{equation*}
The solution of this ODE is of the form $y=c_0e^{-at}+\sum_{j}\tilde{c}_je^{-b_j}t.$
Thus, $y$ is a linear combination of all the ``old" right-hand-side exponents plus the new exponent $e^{-at}$.
Therefore, for $n=1$ and $m_1=k$ we get~\eqref{eq:master_sol}.

\section{Proof of Corollary~\ref{cor:n'''(0)_mild}.}
\label{app:cor:n'''(0)_mild}
When $p$ is homogeneous and $q$ is mildly heterogeneous, the master equation~\eqref{eq:masterHeterofc} reads
\begin{equation}
	\label{eq:masterHeterofc_mild}
	\begin{aligned}
		\frac{d}{dt}&[S^{m_1},\dots,S^{m_n}](t)=\\ &-\left(np+\frac{M-n}{M-1}\sum_{i=1}^{n} q_{m_i}\right)[S^{m_1},\dots,S^{m_n}](t) +\sum_{j=n+1}^{M}\left(\sum_{i=1}^{n} \frac{q_{m_i}}{M-1}\right)[S^{m_1},\dots,S^{m_n},S^{l_j}](t),
	\end{aligned}
\end{equation}
Substituting $n=1$ in~\eqref{eq:masterHeterofc_mild} and differentiating gives
\begin{equation}
	\label{eq:masterHeterofc_mild_prime}
	\frac{d^2}{dt^2}[S^{m_1}](t)=-\left(p+q_{m_1}\right)[S^{m_1}](t) +\sum_{j=2}^{M} \frac{q_{m_1}}{M-1}[S^{m_1},S^{m_j}](t).
\end{equation} 
Substituting $[S^{m_1}]'(0)=-p$ and $[S^{m_1,l}]'(0)=-2p$, see~\eqref{eq:s'_0}, gives
\begin{subequations}
	\label{eqs:s''(0)_mild}
	\begin{equation}
		\label{eq:s_1''(0)_mild}
		\frac{d^2}{dt^2}[S^i](0)=(p+q_i)(p)-(2p)q_i=p(p-q_i).
	\end{equation}
	Similarly, 
	\begin{equation}
		\label{eq:s_2''(0)_mild}
		\frac{d^2}{dt^2}[S^i,S^j](0)=\left(2p+\frac{M-2}{M-1}(q_i+q_j)\right)(2p)-(3p)(M-2)\left(\frac{q_i}{M-1}+\frac{q_j}{M-1}\right)=p\left(4p-\frac{M-2}{M-1}\left(q_i+q_j\right)\right).
	\end{equation}
\end{subequations}
Differentiating~\eqref{eq:masterHeterofc_mild_prime} and using equations~\eqref{eqs:s''(0)_mild} gives
\begin{equation*}
	\frac{d^3}{dt^3}[S^i](0)=-(p+q_i)p(p-q_i)+\frac{q_i}{M-1}\sum_{j=1,j\neq i}^{M}p\left(4p-\frac{M-2}{M-1}\left(q_i+q_j\right)\right),
\end{equation*}
and so we get the desired result by~\eqref{eq:expectedHeterofc}.

\section{Proof of Theorem~\ref{thm:CDF-dominance}.}
\label{app:CDF-dominance}
Inequalities~\eqref{eqs:equ:1_2} imply that for every $i$, there is a probability space $(\Omega'_i,P'_i)$
and two random variables ${t'}_i^A(\omega'_i)$ and ${t'}_i^B(\omega'_i)$ that satisfy 
\begin{subequations}
	\label{eqs:ti'si'}
	\begin{equation}
		\begin{aligned}
			\label{eq:ti'si'cdf}
			F_{t_i^A}(\tau) &= F_{{t'}_i^A}(\tau), \qquad 1\leq i \leq M, \qquad \tau\geq 0,\\
			F_{t_i^B}(\tau) &= F_{{t'}_i^B}(\tau), \qquad 1\leq i \leq M, \qquad \tau\geq 0,
		\end{aligned}
	\end{equation}
	and also the pointwise dominance condition
	\begin{equation}
		\label{eq:ti'si'pointwise}
		{t'}_i^A (\omega'_i)\geq {t'}_i^B(\omega'_i), \qquad \forall i, \forall \omega'_i \in  \Omega'_i, \qquad {t'}_j^A (\omega'_i)>{t'}_j^B(\omega'_i) \qquad \forall \omega'_i \in  \Omega'_i.
	\end{equation}
\end{subequations}
For example, let $0 \le \omega_i' \le 1$, let $P'_i$ be the Lebesgue measure of~$\Omega'_i = [0,~1]$, and let ${t'}_i^A =  F_{t_i^A}^{-1}$. 
Then 
$$
F_{{t'}_i^A}(t) = {\rm Prob}({t'}_i^A(\omega_i') \le t)  = \mu(0 \le \omega_i' \le ({t'}_i^A)^{-1}(t)) = \mu(0 \le \omega_i' \le F_{t_i^A}(t)) = F_{t_i^A}(t),
$$
which gives~\eqref{eq:ti'si'cdf}. In addition, since $F_{t_i^A}(t) \leq F_{t_i^B}(t)$, then ${t'}_i^A =  F_{t_i^A}^{-1} \ge F_{t_i^B}^{-1} = {t'}_i^B$, and since $F_{t_j^A}(t) < F_{t_j^B}(t)$, then ${t'}_j^A > {t'}_j^B$. Therefore, we have~\eqref{eq:ti'si'pointwise}.
By~\eqref{eq:ti'si'pointwise}, ${T'}_m^A(\omega') \ge {T'}_m^B(\omega')$ for all~$m$ and~$\omega'$, and furthermore ${T'}_k^A(\omega')> {T'}_k^B(\omega')$ for $j\leq k\leq M$.  Hence, for $0<t<\infty$,
\begin{equation}
	\label{eq:T_CDF_Inequality}
	\begin{cases}
		{\rm Prob}({T'}_m^A \leq t)\leq{\rm Prob}({T'}_m^B \leq t),&\qquad 1\leq k\leq j-1,\\
		{\rm Prob}({T'}_k^A \leq t)<{\rm Prob}({T'}_k^B \leq t),&\qquad j\leq k\leq M.
	\end{cases}
\end{equation}
Since $
{\rm Prob}(N_A(t) \geq m)  = \sum_{k=m}^M  {\rm Prob}(N_A(t)=k),
$
then 
\begin{equation}
	\label{eq:P(n_T(t)>=n-B}
	\sum_{m=1}^M    {\rm Prob}(N_A(t) \geq m) = \sum_{k=1}^M k \cdot {\rm Prob}(N_A(t)=k) = E[N_A(t)].
\end{equation}
Therefore,
\begin{equation}
	\label{eq:expected_inequality}
	\E_{\omega'}[N'_A(t)]=\sum_{m=1}^{M}{\rm Prob}(N'_A(t)\geq m)=\sum_{m=1}^{M}{\rm Prob}({T'}_m^A \leq t)<\sum_{m=1}^{M}{\rm Prob}({T'}_m^B \leq t)=\E_{\omega'}[N'_B(t)],
\end{equation}
where the inequality follows from~\eqref{eq:T_CDF_Inequality}.
Since the $(t_i^A)$'s are independent, then so are the $({t'}_i^A)$'s. Therefore, 
since $t_i^A$ and ${t'}_i^A$ are identically distributed, see~\eqref{eq:ti'si'cdf},  
then
$$
{\rm Prob} (T_m^A \le t)=  {\rm Prob} ({T'}_m^A \le t)  \quad  \forall m,\forall t. 
$$

By~\eqref{eq:N(t)}, this equality can be rewritten as 
\begin{equation}
	\label{eq:P(n_T(t)>=n}
	{\rm Prob}(N_A(t) \geq m) =  {\rm Prob}(N'_A(t) \geq m), \ \ \ \forall m,\forall t. 
\end{equation}
Therefore, by~\eqref{eq:P(n_T(t)>=n} and~\eqref{eq:P(n_T(t)>=n-B},
\[ \E_{\omega}[N_A(t)]  = \E_{\omega'}[N'_A(t)], \qquad t \ge 0 . \]
Similarly, 
\[ \E_{\omega}[N_B(t)]  = \E_{\omega'}[N'_B(t)], \qquad t \ge 0 . \]
The result follows from~\eqref{eq:expected_inequality} and the last two relations.

\section{Proof of Lemma~\ref{lem:compare_M=2p}.}
\label{app:lem:compare_M=2p}
We first recall an auxiliary lemma:
\begin{lemma} [{\cite[Lemma E.1.]{Bass-boundary-18}}]
	\label{lem:diff_eq_positive}
	Let $\sigma(t)$ be the solution of 
	\begin{equation*}
		\frac{d}{dt}\sigma(t)+K\sigma(t)=b(t), \qquad t>0, \qquad \sigma(0)=0,
	\end{equation*}
	where K is a constant, and $b(t)>0$ for $t>0$. Then $\sigma(t)>0$ for $t>0$.
\end{lemma}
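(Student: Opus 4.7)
The plan is to solve the linear first-order ODE explicitly using an integrating factor, and then argue positivity directly from the sign of the integrand.

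First I would multiply both sides of the equation by the integrating factor $e^{Kt}$ to rewrite the ODE as $\frac{d}{dt}\bigl(e^{Kt}\sigma(t)\bigr) = e^{Kt} b(t)$. Integrating from $0$ to $t$ and using the initial condition $\sigma(0)=0$ yields the closed form
\begin{equation*}
\sigma(t) = e^{-Kt}\int_0^t e^{Ks} b(s)\, ds.
\end{equation*}

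Next I would conclude positivity: for any $t>0$, the factor $e^{-Kt}$ is strictly positive, and the integrand $e^{Ks}b(s)$ is strictly positive on $(0,t]$ since $b(s)>0$ there by hypothesis and exponentials are positive. Hence the integral is strictly positive, and therefore $\sigma(t)>0$ for all $t>0$, as claimed.

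There is no real obstacle here: the result is a standard consequence of the variation-of-constants formula for a scalar linear ODE with constant coefficient. The only point worth flagging is that $K$ is allowed to be any real constant (including negative or zero), but this plays no role in the argument because the integrating factor $e^{Kt}$ and its reciprocal $e^{-Kt}$ are positive regardless of the sign of $K$, so the sign of $\sigma(t)$ is determined entirely by the sign of $b$.
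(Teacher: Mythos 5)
Your proof is correct. The paper does not actually prove this lemma itself---it imports it verbatim from~\cite[Lemma E.1]{Bass-boundary-18}---and your integrating-factor argument, yielding $\sigma(t)=e^{-Kt}\int_0^t e^{Ks}b(s)\,ds>0$ for $t>0$, is exactly the standard variation-of-constants proof one expects (and matches the same manipulation, $\frac{d}{dt}\left(e^{Kt}\sigma\right)=e^{Kt}b>0$, that the paper uses in its own appendix computations), including your correct observation that the sign of $K$ is immaterial.
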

To prove Lemma~\ref{lem:compare_M=2p}, let $\delta:=p_1-p=-(p_2-p)>0$. By~\eqref{eq:expectedHeterofc},
\begin{equation}
	\label{eq:p_difference}
	f^{\rm hom}(t)-f^{\rm het}(t)=\frac{([S^1]^{{\rm het}}(t)+[S^2]^{{\rm het}}(t))-([S^1]^{{\rm hom}}(t)+[S^2]^{{\rm hom}}(t))}{2}.
\end{equation}
Since  for both networks $p_1+p_2=2p$, then by~\eqref{eq:s_M sol},
\begin{equation*}
	[S^1,S^2]^{\rm het}(t)=[S^1,S^2]^{\rm hom}(t)=[S_2](t), \qquad [S_2](t):=1-e^{-2pt}.
\end{equation*}
By~\eqref{eq:master_M=2},
\begin{equation}
	\label{eq:master_p_var_lemma}
	\frac{d}{dt}[S^i]^{\rm het}(t)=-(p_i+q)[S^i]^{\rm het}(t)+q[S_2](t),
\end{equation}
and so
\begin{equation}
	\label{eq:S_hetp}
	\frac{d}{dt}([S^1]^{\rm het}+[S^2]^{\rm het})(t)=-(p+\delta+q)[S^1]^{\rm het}(t)+q[S_2](t)-(p-\delta+q)[S^2]^{\rm het}(t)+q[S_2](t),
\end{equation}
and
\begin{equation}
	\label{eq:S_hom}
	\frac{d}{dt}([S^1]^{\rm hom}+[S^2]^{\rm hom})(t)=2\frac{d}{dt}[S^1]^{\rm hom}(t)=2\left(-(p+q)[S^1]^{\rm hom}(t)+q[S_2](t)\right).
\end{equation}
Let $y(t):=([S^1]^{\rm het}(t)+[S^2]^{\rm het}(t))-([S^1]^{\rm hom}(t)+[S^2]^{\rm hom}(t))=2(f^{\rm hom}(t)-f^{\rm het}(t))$, see~\eqref{eq:p_difference}. We need to prove that $y(t)>0$ for $0<t<\infty$.
By~\eqref{eq:S_hetp} and~\eqref{eq:S_hom},
\begin{equation*}
	\frac{d}{dt}y(t)= -(p+q)y(t)-\delta[S^1]^{\rm het}(t)+\delta[S^2]^{\rm het}(t), \qquad y(0)=0.
\end{equation*}
Therefore, by Lemma~\ref{lem:diff_eq_positive}, it suffices to show that
\begin{equation}
	\label{eq:y>0}
	-\delta([S^1]^{\rm het}(t)-[S^2]^{\rm het}(t))>0.
\end{equation}
By~\eqref{eq:master_p_var_lemma}
\begin{equation*}
	(\frac{d}{dt}[S^1]^{\rm het}-\frac{d}{dt}[S^2]^{\rm het})(t)+(p+q)([S^1]^{\rm het}-[S^2]^{\rm het})(t)=-\delta([S^1]^{\rm het}+[S^2]^{\rm het})(t).
\end{equation*}
Applying Lemma~\ref{lem:diff_eq_positive} to this ODE gives that  $[S^1]^{\rm het}(t)<[S^2]^{\rm het}(t)$ for $t>0$,  hence~\eqref{eq:y>0} holds.

\section{CDF dominance condition.}
\label{app:cdf_dominance}
We begin with two auxiliary lemmas.

\begin{lemma}
	\label{lem:i_is_first}
	Assume that the time $t_j$ at which $j$ adopts is exponentially distributed with parameter $\lambda_j$, where $j=1:J$. Then
	\begin{equation}
		\label{eq:i_is_first}
		{\rm Prob}{i~{\rm adopts\ before} \choose {\rm all\ others}} = \frac{\lambda_i}{\sum_{j=1}^J \lambda_j}.
	\end{equation}
\end{lemma}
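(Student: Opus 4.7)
The plan is to compute the probability directly using the joint density of the independent exponential times $\{t_j\}_{j=1}^J$. The event ``$i$ adopts before all others'' is exactly $\{t_i < t_j \text{ for all } j \neq i\}$, so I would condition on the value of $t_i$ and integrate.

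First, I would write
\[
\text{Prob}(t_i < t_j \text{ for all } j \neq i) = \int_0^\infty \lambda_i e^{-\lambda_i s} \, \text{Prob}(t_j > s \text{ for all } j \neq i) \, ds.
\]
By independence of $\{t_j\}$ and the exponential survival function $\text{Prob}(t_j > s) = e^{-\lambda_j s}$, the conditional probability inside the integrand factors as
\[
\text{Prob}(t_j > s \text{ for all } j \neq i) = \prod_{j \neq i} e^{-\lambda_j s} = e^{-\left(\sum_{j \neq i} \lambda_j\right) s}.
\]
Substituting this in yields
\[
\int_0^\infty \lambda_i e^{-\left(\sum_{j=1}^J \lambda_j\right) s} \, ds = \frac{\lambda_i}{\sum_{j=1}^J \lambda_j},
\]
which is~\eqref{eq:i_is_first}.

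There is no real obstacle here; the only thing to be careful about is using independence correctly when factoring the joint survival probability, and noting that the event of ties has probability zero (so strict vs.\ non-strict inequalities do not matter). An equivalent route would be to invoke the standard fact that $\min_j t_j$ is exponential with rate $\sum_j \lambda_j$ and that the identity of the argmin is independent of the value of the minimum, with distribution proportional to $\{\lambda_j\}$; but the direct integration above is self-contained and shorter.
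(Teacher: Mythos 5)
Your proof is correct and follows essentially the same route as the paper's: conditioning on the value of $t_i$, factoring the joint survival probability $\prod_{j \neq i} e^{-\lambda_j s}$ by independence, and integrating to obtain $\lambda_i / \sum_{j=1}^J \lambda_j$. The only cosmetic difference is that the paper writes each survival factor as the integral $\int_t^\infty \lambda_j e^{-\lambda_j \tau_j}\, d\tau_j$ before evaluating it, whereas you quote the survival function directly.
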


\begin{proof}
	\begin{eqnarray*}
		{\rm Prob}{i~{\rm adopts\ before} \choose {\rm all\ others}} & = & \int_0^\infty f(i~{\rm adopts\ at}\ t) \prod_{j=1, j \not=i}^J \left( \int_t^\infty f(j~{\rm adopts\ at}\ \tau_j) \, d\tau_j \right) \,  dt 
		\\ & = &   \int_0^\infty \lambda_i e^{-\lambda_i t}  \prod_{j=1, j \not=i}^J \left( \int_t^\infty \lambda_j e^{-\lambda_j \tau_j} \, d\tau_j \right) \,  dt 
		= \int_0^\infty \lambda_i e^{-\lambda_i t}  \prod_{j=1, j \not=i}^J e^{-\lambda_j t}  \,  dt 
		\\ &=&  \int_0^\infty \lambda_i e^{-(\sum_{j=1}^J \lambda_j) t}   \,  dt
		= \frac{\lambda_i}{\sum_{j=1}^J \lambda_j}.
	\end{eqnarray*}		
\end{proof}

\begin{lemma}
	\label{lem:iChebyshev}
	Let $a = \frac1n \sum_{i=1}^n  a_i$ and $w_i = g(a_i)$, where $g$ is monotonically increasing and  $\sum_{i=1}^n w_i = 1$.
	Then
	$
	\sum_{i=1}^n w_i a_i \ge  a.   
	$
\end{lemma}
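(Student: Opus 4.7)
The plan is to use the classical symmetrization (``doubling'') trick behind Chebyshev's sum inequality. I would rewrite the desired inequality $\sum_i w_i a_i \ge a$ in the equivalent form $n\sum_i w_i a_i - \sum_i a_i \ge 0$, and then express the left-hand side as a symmetric double sum that is manifestly non-negative.

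Concretely, first I would use the two trivial identities $\sum_j w_j = 1$ and $\sum_j 1 = n$ to write
\begin{equation*}
  n\sum_{i=1}^n w_i a_i - \sum_{i=1}^n a_i
  \;=\; \sum_{i,j=1}^n w_i a_i \;-\; \sum_{i,j=1}^n w_j a_i
  \;=\; \sum_{i,j=1}^n (w_i - w_j)\, a_i.
\end{equation*}
Next, by swapping the roles of $i$ and $j$ in the right-hand side, the same expression equals $\sum_{i,j}(w_j - w_i)\, a_j$. Averaging these two representations yields the symmetric form
\begin{equation*}
  n\sum_{i=1}^n w_i a_i - \sum_{i=1}^n a_i
  \;=\; \tfrac{1}{2}\sum_{i,j=1}^n (w_i - w_j)(a_i - a_j).
\end{equation*}

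Finally, I would invoke the hypothesis that $w_i = g(a_i)$ with $g$ monotonically increasing: for every pair $(i,j)$, $a_i - a_j$ and $w_i - w_j = g(a_i) - g(a_j)$ have the same sign (or one of them vanishes), so $(w_i - w_j)(a_i - a_j) \ge 0$ term by term. Hence the double sum is non-negative, which gives $\sum_i w_i a_i \ge \frac{1}{n}\sum_i a_i = a$, as claimed.

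There is no real obstacle here; the only thing to be careful about is setting up the double-sum identity cleanly so that the symmetrization step produces the pairwise products $(w_i-w_j)(a_i-a_j)$, after which the monotonicity of $g$ finishes the argument in one line. The same proof would also show that equality holds if and only if $g$ is constant on the set $\{a_1,\dots,a_n\}$ (equivalently, all the $w_i$'s are equal to $1/n$), which may be worth noting as a remark.
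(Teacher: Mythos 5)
Your proof is correct, and every step checks out: the two identities $\sum_j w_j=1$ and $\sum_j 1=n$ give $n\sum_i w_ia_i-\sum_i a_i=\sum_{i,j}(w_i-w_j)a_i$, the relabeling $i\leftrightarrow j$ and averaging give the symmetric form $\tfrac12\sum_{i,j}(w_i-w_j)(a_i-a_j)$, and each summand is nonnegative because $w_i-w_j=g(a_i)-g(a_j)$ has the same sign as $a_i-a_j$ by monotonicity of $g$. The route differs from the paper's in presentation rather than in substance: the paper first reorders WLOG so that $a_1\le\cdots\le a_n$ (whence $w_1\le\cdots\le w_n$) and then simply \emph{cites} the classical Chebyshev sum inequality, $\frac1n\sum_i a_iw_i\ge\bigl(\frac1n\sum_i a_i\bigr)\bigl(\frac1n\sum_i w_i\bigr)$, together with $\sum_i w_i=1$. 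Your symmetrization argument is precisely the standard proof of that classical inequality, inlined; what it buys you is a self-contained two-line derivation that needs no sorting step and no external reference, makes transparent that the hypothesis enters only through the pairwise sign condition $(g(a_i)-g(a_j))(a_i-a_j)\ge0$ (so ``similarly ordered'' sequences would suffice, even without a function $g$), and yields the equality characterization you note --- equality iff $g$ is constant on $\{a_1,\dots,a_n\}$, i.e.\ all $w_i=\tfrac1n$ --- as a free byproduct. The paper's version is shorter on the page but delegates the real work to the cited inequality; either is a complete proof of the lemma as used in the CDF dominance argument.
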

\begin{proof}
	Without loss of generality,  $a_1 \le a_2 \le \dots \le  a_n$. Therefore, from the monotonicity of $g$, $w_1 \le w_2  \le \dots \le w_n$. 
	Hence, one can apply the {\em Chebyshev sum inequality}
	\begin{equation}
		\label{eq:chebyshev}
		\frac1n \sum_{i=1}^n  a_i w_i \ge \left(\frac1n \sum_{i=1}^n  a_i \right) \left( \frac1n \sum_{i=1}^n  w_i \right) =  \frac{a}n.
	\end{equation}	
\end{proof}
\noindent We now prove the CDF dominance condition:


\begin{proof}{Proof of Lemma~\ref{lem:F^heter_k<F^hom_k_pq}: }
	The time $t_1^{\rm hom}$ until the first adoption in the homogeneous network is exponentially distributed with parameter~$Mp$. 
	Therefore, the corresponding CDF is 
	$$
	F^{\rm hom}_1(t) :={\rm Prob}(t_1^{\rm hom} \le t) =   1-e^{-Mp t}.
	$$
	Similarly, the time $t_1^{\rm het}$ until the first adoption in the heterogeneous network is exponentially distributed with parameter~$\sum_{i = 1}^M p_i$. 
	Therefore, the corresponding CDF is 
	$$
	F^{\rm het}_1(t) := {\rm Prob}(t_1^{\rm het} \le t) =   1-e^{-(\sum_{i = 1}^M p_i) t}.
	$$
	Hence, by definition of~$p$, 
	$$
	F^{\rm hom}_1(t)   =  F^{\rm het}_1(t).
	$$
	
	In the homogeneous case,  the time $t_2^{\rm hom}$ between the first and second adoptions  is exponentially distributed with parameter 
	$(M-1)(p+\frac{q}{M-1})=(M-1)p+q$.
	Therefore, the corresponding CDF is 
	$$
	F^{\rm hom}_2(t) :={\rm Prob}(t_2^{\rm hom} \le t) =   1-e^{-((M-1)p+q)t}.
	$$
	In the heterogeneous case,  let $w_k$ denote the probability that the first adopter was~$k$. In that case,
	the time between the first and second adoptions is exponentially distributed with parameter
	$\sum_{i = 1, i \not=k}^M (p_i+\frac{q_i}{M-1}) = Mp-p_k+\frac{Mq-q_k}{M-1}$, and so the corresponding conditional CDF 
	is 
	$$
	F_{2,k}(t) :=   {\rm Prob}(t_2^{\rm hom} \le t ~|~ \rm{1st\ adopter\ was}\ k) =    1-e^{-(Mp-p_k+\frac{Mq-q_k}{M-1})t}.
	$$
	Therefore, the overall CDF for~$t_2$ is 
	\begin{eqnarray*}
		F^{\rm het}_2(t) &=& \sum_{k = 1}^M  {\rm Prob}(t_2^{\rm hom} \le t ~|~ \rm{1st\ adopter\ was}\ k) \cdot   {\rm Prob}(\rm{1st\ adopter\ was}\ k)
		\\  &=& \sum_{k = 1}^M  w_k F_{2,k}(t) 
		= \sum_{k = 1}^M  w_k \left( 1-e^{-((M-1)p+\frac{Mq-q_k}{M-1})t}\right)  < 1- e^{-\sum_{k = 1}^M  w_k ((Mp-p_k+\frac{Mq-q_k}{M-1})t},
	\end{eqnarray*}
	where the last inequality follows from the strict concavity ($G_{\lambda \lambda}<0$) of the function $G = 1-e^{-\lambda t}$ when $0<t<\infty$.
	%
	%
	%
	
	By~\eqref{eq:i_is_first}, 
	\begin{equation}
		\label{eq:w_k}
		w_k = \frac{p_k}{\sum_{j=1}^M p_i} = \frac{p_k}{Mp}.
	\end{equation}
	Therefore, $w_k$ is monotonically increasing in~$p_k$, and so by Lemma~\ref{lem:iChebyshev},
	\begin{equation}
		\label{eq:sum_w_k*p_k>p}
		\sum_{k = 1}^M  w_k p_k  \geq \frac1M  \sum_{k = 1}^M  p_k  =  p.
	\end{equation}
	Hence, since $G = 1-e^{-\lambda t}$ is monotonically increasing in $\lambda$, 
	$$
	F^{\rm het}_2(t)  < 1- e^{-((M-1)p+q)t} = F^{\rm hom}_2(t), \qquad 0<t<\infty.
	$$

	In the homogeneous case,  the time $t_3$ between the second and third adoptions is exponentially distributed with parameter $(M-2)(p+ \frac{2q}{M-1})$.
	Therefore, the corresponding CDF is given by
	\begin{equation}
		\label{eq:F_+3^hom}
		F^{\rm hom}_3(t) :=  {\rm Prob}(t_3^{\rm hom} \le t) =  1-e^{-((M-2)p+2q \frac{M-2}{M-1})t}.
	\end{equation}
	In the heterogeneous case,  let $w_{k,m}$ denote the probability that the first and second adopters were~$k$ and $m$, respectively. In that case, 
	the time between the second and third  adoptions is exponentially distributed with parameter
	$\sum_{i = 1, i \not=k,m}^M (p_i+ 2\frac{q_i}{M-1}) = Mp-p_k-p_m+2\frac{Mq-q_k-q_m}{M-1}$, and so the corresponding conditional CDF 
	is $F_{3,k,m}(t) = 1-e^{-(Mp-p_k-p_m+2\frac{Mq-q_k-q_m}{M-1})t}$. Therefore, the overall CDF for $t_3^{\rm het}$ is 
	\begin{eqnarray}
		\nonumber
		F^{\rm het}_3(t) &=& \sum_{\substack{k,m = 1\\ m\neq k}}^M  w_{k,m} F_{3,k,m}(t) = \sum_{\substack{k,m = 1\\ m\neq k}}^M  w_{k,m} \left( 1-e^{-(Mp-p_k-p_m+2\frac{Mq-q_k-q_m}{M-1})t}\right) 
		\\ &<& 
		1- e^{-\sum_{\substack{k,m = 1\\ m\neq k}}^M  w_{k,m} ((Mp-p_k-p_m+2\frac{Mq-q_k-q_m}{M-1})t},
		\label{eq:CDF3pq}
	\end{eqnarray}
	where the last inequality follows from the strict concavity of~$G = 1-e^{-\lambda t}$ when $0<t<\infty$.
	%
	%
	%
	By~\eqref{eq:i_is_first}, 
	$$
	w_{k,m} = w_k \cdot {\rm Prob}({\rm 2nd\ adopter\ is\ m}\ |\ \rm{1st\ adopter\ was}\ k)  =\frac{p_k}{Mp}\left( \frac{p_m+\frac{q_m}{M-1}}{Mp-p_k+\frac{Mq-q_k}{M-1}}  \right)  .
	$$
	In addition, 
	\begin{align*}
		\sum_{\substack{k,m = 1\\ m\neq k}}^M  w_{k,m} \left(p_k+\frac{2q_k}{M-1}\right)  &= \sum_{k= 1}^M w_k \left(p_k+\frac{2q_k}{M-1}\right) \underbrace{\sum_{m= 1, m\not=k}^M  {\rm Prob}({\rm 2nd\ adopter\ is\ m}\ |\ \rm{1st\ adopter\ was}\ k)}_{=1}
		\\&=\left(p+2\frac{q}{M-1}\right),
	\end{align*}

	and
	\begin{eqnarray*}
		\sum_{\substack{k,m = 1\\ m\neq k}}^M  w_{k,m} \left(p_m+\frac{2q_m}{M-1}\right)  &= &
		\sum_{k = 1}^M  \frac{p_k}{Mp} \sum_{m = 1, m \not=k}^M \frac{p_m+\frac{q_m}{M-1}}{Mp-p_k+\frac{Mq-q_k}{M-1}}\left(p_m+\frac{2q_m}{M-1}\right)   
		\\ &\geq& \sum_{k = 1}^M  \frac{1}{M} \sum_{m = 1, m \not=k}^M \frac{p_m+\frac{q_m}{M-1}}{(M-1)p+\frac{Mq-q}{M-1}}\left(p_m+\frac{2q_m}{M-1}\right)  
		\\ &\geq& \sum_{k = 1}^M  \frac{1}{M} \sum_{m = 1, m \not=k}^M \frac{p+\frac{q}{M-1}}{(M-1)p+\frac{Mq-q}{M-1}}\left(p+\frac{2q}{M-1}\right) 
		= \left(p+\frac{2q}{M-1}\right)  ,
	\end{eqnarray*}
	where the two inequalities follow from Lemma~\ref{lem:iChebyshev}. 
	Therefore, 
	$$
	\sum_{\substack{k,m = 1\\ m\neq k}}^M  w_{k,m} (p_k+p_m+2\frac{q_k+q_m}{M-1}) 
	\geq  2p+4\frac{q}{M-1}
	$$
	Hence, since $G = 1-e^{-\lambda t}$ is monotonically increasing in $\lambda$, then using~\eqref{eq:F_+3^hom} and~\eqref{eq:CDF3pq},
	$$
	F^{\rm het}_3(t) < 1-e^{-((M-2)p+2q \frac{M-2}{M-1})t} = F^{\rm hom}_3(t), \qquad 0<t<\infty.
	$$
	
	Proceeding similarly, one can show that~\eqref{eq:F^heter_k<F^hom_k_pq} also holds for $k = 4, \dots, M$.  	
\end{proof}

\begin{proof}{Proof of Lemma~\ref{lem:t_i_independent}: }
	We need to prove that for any $m \in \{ 2, \dots,  M\}$, $t_m \in \{ t_m^{\rm het}, t_m^{\rm hom}\}$, and $\{\tau_1, \dots, \tau_m \}  \in ({\Bbb R}^+)^m$, 
	$$
	{\rm Prob}(t_m \le \tau_m) = {\rm Prob}(t_m \le \tau_m ~|~t_1 \le \tau_1, \dots,  t_{m-1} \le \tau_{m-1} ). 
	$$ 
	In the homogeneous case, the random variable $t_m^{\rm hom}$ and the conditional random variable $t_m^{\rm hom} ~|~(t_1^{\rm hom} \le \tau_1, \dots,  t_{m-1}^{\rm hom} \le \tau_{m-1})$
	are both exponentially distributed with parameter  $(M-(m-1))(p+ (m-1)\frac{q)}{M-1}) $. 
	Hence,
	$$
	{\rm Prob}(t_m^{\rm hom} \le \tau_m) = 1-e^{-((M-(m-1))p+(m-1)q \frac{M-(m-1)}{M-1})t} =  {\rm Prob}(t_m^{\rm hom} \le \tau_m ~|~t_1^{\rm hom} \le \tau_1, \dots,  t_{m-1}^{\rm hom} \le \tau_{m-1} ). 
	$$
	
	
	In the heterogeneous case, to simplify the calculations, we present the proof for $m=2$, i.e., 
	\begin{equation}
		\label{eq:t_2_ind_of_t1}
		{\rm Prob}(t_2^{\rm het} \le t) =   {\rm Prob}(t_2^{\rm het} \le t ~|~t_1^{\rm het} \le \tau_1).
	\end{equation}
	The proof for the other cases is identical. 
	Recall that
	$$
	{\rm Prob}(t_2^{\rm het} \le t) = \sum_{k = 1}^M  {\rm Prob}(t_2^{\rm het} \le t ~|~ \rm{1st\ adopter\ was}\  {\it k}) \cdot   {\rm Prob}(\rm{1st\ adopter\ was}\  {\it k}).
	$$
	Therefore,
	\begin{eqnarray*}
		&&  {\rm Prob}(t_2^{\rm het} \le t ~|~t_1^{\rm het} \le \tau_1)=
		\\
		&& \qquad \sum_{k = 1}^M  {\rm Prob}(t_2^{\rm het} \le t ~|~ \rm{1st\ adopter\ was}\ {\it k} ,~t_1^{\rm het} \le \tau_1) \cdot   {\rm Prob}(\rm{1st\ adopter\ was}\  {\it k}  ~|~t_1^{\rm het} \le \tau_1).
	\end{eqnarray*}
	The conditional random variables $t_2^{\rm het} ~|~ (\rm{1st\ adopter\ was}\  {\it k})$ and $t_2^{\rm het} ~|~ (\rm{1st\ adopter\ was}\  {\it k},~t_1^{\rm het} \le \tau_1)$
	are both exponentially distributed with parameter  $(Mp -p_k+q)$. Therefore, 
	$$
	{\rm Prob}(t_2^{\rm het} \le t ~|~ \rm{1st\ adopter\ was}\  {\it k}) =  {\rm Prob}(t_2^{\rm het} \le t ~|~ \rm{1st\ adopter\ was}\  {\it k},~t_1^{\rm het} \le \tau_1)
	$$
	In addition, by~\eqref{eq:w_k}, ${\rm Prob}(\rm{1st\ adopter\ was}\  {\it k})  = w_k =  \frac{p_k}{\sum_{j=1}^M p_i}$
	is independent of $t_1^{\rm het}$, and so 
	$$
	{\rm Prob}(\rm{1st\ adopter\ was}\  {\it k})  =  {\rm Prob}(\rm{1st\ adopter\ was}\  {\it k}  ~|~t_1^{\rm het} \le \tau_1).
	$$
	Therefore, we proved~\eqref{eq:t_2_ind_of_t1}.	
\end{proof}

\section{Small-time analysis}
\label{app:small_time}
{\rev
	Many results in this paper can be easily proven for $t\ll 1$ using the explicit expressions for $f'(0)$, $f''(0)$, and $f'''(0)$ in Section~\ref{subsec:initial_dynamics}:
	\begin{itemize}
		\item
		Theorem~\ref{thm:compare_p}: 
		
		By~\eqref{eq:n'(0)}, when the network is heterogeneous in $p$ and homogeneous in $q$, then $$\left(f^{\rm het}\right)'(0)=\frac{1}{M}\sum_{i=1}^{M}p_i, \qquad	\left(f^{\rm het}\right)''(0)=\frac{1}{M}\sum_{i=1}^{M}p_i\left(q-p_i\right)=\frac{q}{M}\sum_{i=1}^{M}p_i-\frac{1}{M}\sum_{i=1}^{M}p_i^2.$$ 
		By~\eqref{eq:initial_hom},
		$\left(f^{\rm hom}\right)'(0)=p$ and $\left(f^{\rm hom}\right)''(0)=qp-p^2$.
		Therefore, under the conditions of Theorem~\ref{thm:compare_p}, $$\left(f^{\rm het}\right)'(0)=\left(f^{\rm hom}\right)'(0), \qquad  \left(f^{\rm hom}\right)''(0)-\left(f^{\rm het}\right)''(0)={\rm Var}\{p_1,\dots,p_m\}>0.\footnote{Recall that $\frac{1}{M}\sum_{i=1}^{M}x_i^2-\E[X]^2=\frac{1}{M}\sum_{i=1}^{M}\left(x_i-\E[X]\right)^2=Var(X)$.}$$  Hence, $f^{\rm het}(t)<f^{\rm hom}(t)$ for $0<t\ll 1$.
		
		\item
		Theorem~\ref{thm:compare_q}:
		
		Under the conditions of Theorem~\ref{thm:compare_q}, by~\eqref{eq:n''(0)_p_hom} and~\eqref{eq:initial_hom}, $f'(0)=p$ and $f''(0)=p(q-p)$, both for the heterogeneous and homogeneous networks. Furthermore, by~\eqref{eq:n'''(0)_mild} and~\eqref{eq:initial_hom}, 
		$
		\left(f^{\rm het}\right)'''(0)=p^3+pq\left(\frac{M(M-2)}{(M-1)^2}q-4p\right)-\frac{(2M-3)}{M(M-1)^2}p\sum_{i=1}^{M}q_i^2
		$
		and
		$
		\left(f^{\rm hom}\right)'''(0)=p^3+pq\left(\frac{M(M-2)}{(M-1)^2}q-4p\right)-\frac{(2M-3)}{(M-1)^2}pq^2.
		$
		Therefore, $\left(f^{\rm hom}\right)'''(0)-\left(f^{\rm het}\right)'''(0)=p\frac{(2M-3)}{(M-1)^2}{\rm Var}\{q_1,\dots,q_M\}>0$. Hence, $f^{\rm het}(t)<f^{\rm hom}(t)$ for $0<t\ll 1$.
		
		\item
		Theorem~\ref{thm:compare_pq}:
		
		Under the conditions of Theorem~\ref{thm:compare_pq}, by~\eqref{eq:n'(0)} and~\eqref{eq:initial_hom},
		$	\left(f^{\rm het}\right)'(0)=\frac{1}{M}\sum_{i=1}^{M}p_i=p=\left(f^{\rm hom}\right)'(0).
		$
		By the Chebyshev sum inequality, see~\eqref{eq:chebyshev}, since $\{p_i\}$ and $\{q_i\}$ are positively correlated, then $\sum_{i=1}^{M}p_iq_i\geq \frac{1}{M}\sum_{j=1}^{M}q_j\sum_{i=1}^{M}p_i$, and so by~\eqref{eq:n''(0)_mild},
		\begin{equation*}
			\left(f^{\rm het}\right)''(0)\leq \frac{1}{M(M-1)}\left[\sum_{j=1}^{M}q_j\sum_{i=1}^{M}p_i-\frac{1}{M}\sum_{j=1}^{M}q_j\sum_{i=1}^{M}p_i\right]-\frac{1}{M}\sum_{i=1}^{M}p_i^2=pq-\frac{1}{M}\sum_{i=1}^{M}p_i^2.
		\end{equation*}
		Therefore, since $\left(f^{\rm hom}\right)''(0)=pq-p^2$, see~\eqref{eq:initial_hom}, then $\left(f^{\rm hom}\right)''(0)-\left(f^{\rm het}\right)''(0)\geq{\rm Var}\{p_1,\dots,p_m\}>0$. Hence, $f^{\rm het}(t)<f^{\rm hom}(t)$ for $0<t\ll 1$.
		
		\item
		Lemma~\ref{lem:compare_M=2pq}:
		
		By~\eqref{eq:n'(0)} and~\eqref{eq:initial_hom},
		$\left(f^{\rm het}\right)'(0)=\left(f^{\rm hom}\right)'(0)=p$.
		By~\eqref{eq:n'(0)} and~\eqref{eq:initial_hom}, $\left(f^{\rm het}\right)''(0)=2p(q-p),$ and $\left(f^{\rm hom}\right)''(0)=p(q-p).$ Therefore, the difference
		$\left(f^{\rm het}\right)''(0)-\left(f^{\rm hom}\right)''(0)=p(q-p)$
		is positive if $q>p$ and negative if $q<p$, which implies Lemma~\ref{lem:compare_M=2pq} for $0<t\ll 1$.
		\item
		Level of heterogeneity~(Section~\ref{sec:level}):
		
		By Lemma~\ref{lem:n_initial_dynamics} and Corollary~\ref{cor:n'''(0)_mild}, $f'(0)$ only depends on the mean of $p$, while $f''(0)$ depends also on the variance of $p$. Similarly, $f''(0)$ only depends on the mean of $q$, while $f'''(0)$ depends also on the variance of $q$. Hence, the effects of the variances are much smaller than those of the parameters themselves.
	\end{itemize}
}

\section{Proof of Lemma~\ref{lem:compare_M=2pq}.}
\label{app:lem:compare_M=2pq}
First, assume that $p\neq q$. By equation~\eqref{eqs:fc_M=2},
\begin{equation}
	\label{eq:two_sided_hom_M=2}
	f^{\rm hom}(t)=1-\left(\frac{p}{p-q}\right)e^{-(p+q)t}+\left(\frac{q}{p-q}\right)e^{-2pt}.
\end{equation}
We cannot use~\eqref{eqs:fc_M=2} to obtain $f^{\rm het}(t)$, because $b_1=\frac{q_{2,1}}{p_2-q_{2,1}}=\frac{0}{0}$. Therefore, we solve~\eqref{eqs:master_M=2} directly. By~\eqref{eq:s_M sol},
$	[S^1,S^2](t)=e^{-2pt}.
$
By~\eqref{eq:master_M=2} and~\eqref{eq:masterHeterofcInitial},
\begin{equation*}
	\begin{cases}
		\frac{d}{dt}[S^1](t)=-2p[S^1](t), &\qquad [S^1](0)=1,\\
		\frac{d}{dt}[S^2](t)=-2q[S^2](t)+2qe^{-2pt}, &\qquad [S^2](0)=1.
	\end{cases}
\end{equation*}
Therefore,
\begin{equation*}
	[S^1](t)=e^{-2pt}, \qquad [S^2](t)=\frac{q}{q-p}e^{-2pt}-\frac{p}{q-p}e^{-2qt},
\end{equation*}
and so, by~\eqref{eq:expectedHeterofc},
\begin{equation}
	\label{eq:M=2pq_het_equation}
	f^{\rm het}(t)=1-\frac{2q-p}{2(q-p)}e^{-2pt}+\frac{p}{2(q-p)}e^{-2qt}.
\end{equation}
Hence,
\begin{equation}
	\label{eq:f_pq_diff}
	g(t):=f^{\rm het}(t)-f^{\rm hom}(t)=\frac{p}{q-p}\left(\frac{1}{2}(e^{-2pt}+e^{-2qt})-e^{-(p+q)t}\right).
\end{equation}
For any strictly convex function ($f''>0$) we have that
\begin{equation}
	\label{eq:convex}
	f\left(\frac{x_1+x_2}{2}\right)<\frac{1}{2}\left(f(x_1)+f(x_2)\right).
\end{equation}
Since $f=e^{at}$ is convex, equations~\eqref{eq:f_pq_diff} and~\eqref{eq:convex} give that $g(t)>0$  if $q>p$ and $g(t)<0$ if $q<p$.
By continuity, $g(t)\equiv 0$ if $p=q$.

\section{Proof of Lemma~\ref{lem:masterHetero}.}
\label{app:lem:masterHetero}
By~\eqref{eq:masterHeterofc}, for $1\leq n \leq M-1$,
\begin{equation*}
	\begin{aligned}
		\frac{d}{dt}[S_k^j](t)=&\frac{d}{dt}[S^{j-k+1},S^{j-k+2},\dots,S^{j}](t)\\= -\Bigg(\Bigg(&\sum_{i=j-k+1}^{j}p_{i}\Bigg) +\sum_{\substack{l=1 \\ l \notin \{j-k+1,j-k+2,\dots,j\} }}^{M}\Bigg(\sum_{i=j-k+1}^{j} q_{l,i}\Bigg)\Bigg)[S^{j-k+1},S^{j-k+2},\dots,S^{j}](t)\\ &+\sum_{\substack{l=1 \\ l \notin \{j-k+1,j-k+2,\dots,j\} }}^{M}\Bigg(\sum_{i=j-k+1}^{j}q_{l,i}\Bigg)[S^{j-k+1},S^{j-k+2},\dots,S^{j},S^l](t).
	\end{aligned}
\end{equation*}
Since the network is a one-sided circle, $q_{l,i}\neq0$ only if $(i-l) \Mod M=1$. The only non-zero $q_{l,i}$ in the sum is $q_{j-k,j-k+1}$, since $i\in \{j-k+1,j-k+2,\dots,j\}$ and $l\notin \{j-k+1,j-k+2,\dots,j\}$.
Equations~\eqref{eq:masterHeteroM} and~\eqref{eq:masterHeteroMInitial} follow directly from~\eqref{eq:masterHeterofcM} and~\eqref{eq:masterHeterofcInitial}, respectively.

\section{Proof of Theorem~\ref{thm:one_sided}.}
\label{app:thm:one_sided}
We can rewrite~\eqref{eqs:masterHeteros} as a system of $M$ linear constant-coefficient ODEs
\begin{subequations}
	\label{eqs:one_sided_circle_ivp}
	\begin{equation}
		\label{eq:one_side_circle_odes}
		\dot{[\boldsymbol{S}^j]}=\boldsymbol{A}^j[\boldsymbol{S}^j],
	\end{equation}
	where
	\begin{equation*}
		[\boldsymbol{S}^j]= 
		\begin{pmatrix}
			[S_1^j](t)\\ [S_2^j](t)\\ \vdots\\ [S_M^j](t)
		\end{pmatrix}, \qquad 
		\dot{[\boldsymbol{S}^j]}= 
		\begin{pmatrix}
			\dot{[S_1^j]}(t)\\ \dot{[S_2^j]}(t)\\ \vdots\\ \dot{[S_M^j]}(t)
		\end{pmatrix}.
	\end{equation*}
	and $\boldsymbol{A}^j$ is the bi-diagonal matrix
	whose two non-zero diagonals are
	\begin{equation*}
		\begin{pmatrix}
			a_{1,1}\\a_{2,2}\\\vdots\\a_{k,k}\\\vdots\\a_{M,M}
		\end{pmatrix}
		=
		\begin{pmatrix}
			-p_j-q_j\\ -p_{j-1}-p_j-q_{j-1}\\\vdots\\(-\sum_{i=j-k+1}^{j}p_i) -q_{j-k+1}\\\vdots\\-\sum_{i=1}^{M}p_i
		\end{pmatrix},
		\qquad
		\begin{pmatrix}
			a_{1,2}\\a_{2,3}\\\vdots\\a_{k-1,k}\\\vdots\\a_{M-1,M}
		\end{pmatrix}
		=
		\begin{pmatrix}
			q_j\\q_{j-1}\\\vdots\\q_{j-k+1}\\\vdots\\q_{j-M+2}
		\end{pmatrix},
	\end{equation*}
	
	together with the initial condition 
	\begin{equation}
		\label{eq:initial_one_sided}
		[\boldsymbol{S}^j]_{|t=0}=
		\begin{pmatrix}
			1 \\ \vdots \\ 1
		\end{pmatrix}.
	\end{equation}
\end{subequations}

\begin{lemma}
	\label{lem:systemSolution}
	The solution of~\eqref{eqs:one_sided_circle_ivp} is
	\begin{equation}
		\label{eq:systemSolution}
		[\boldsymbol{S}^j]=\sum_{k=1}^{M}c_k^j\boldsymbol{v}_k^je^{\lambda_k^jt},
	\end{equation} 
	where $\lambda_k^j$ is given by~\eqref{eq:one_sided_eigenvalues}, $c_k^j$ is given by~\eqref{eq:one_sided_constants}, and
	\begin{equation*}
		v_k^j(n)=
		\begin{cases}
			\prod\limits_{m=j-k+2}^{j-n+1}\left(\frac{-q_m}{\left(\sum\limits_{i=j-k+1}^{m-1}p_i\right) +q_{j-k+1}-q_m}\right), &\qquad {\rm if}\ n\leq k-1, k=1,\dots,M-1,\\
			\prod\limits_{m=j-M+2}^{j-n+1}\left(\frac{-q_m}{\left(\sum\limits_{i=j-M+1}^{m-1}p_i\right) -q_m}\right), &\qquad {\rm if}\ n\leq M-1, k=M,\\
			1, &\qquad {\rm if}\ n=k,\\
			0, & \qquad {\rm otherwise.}
		\end{cases}
	\end{equation*}
\end{lemma}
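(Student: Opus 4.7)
The plan is to diagonalize the upper bi-diagonal matrix $\boldsymbol{A}^j$ explicitly. Since $\boldsymbol{A}^j$ is upper triangular, its eigenvalues are exactly its diagonal entries $a_{n,n}$, which are precisely the values $\lambda_k^j$ listed in~\eqref{eq:one_sided_eigenvalues}. Assuming for now that the $\lambda_k^j$'s are distinct, each eigenvalue admits a unique (up to scalar) eigenvector, and the general solution of the linear constant-coefficient system~\eqref{eq:one_side_circle_odes} takes the claimed form $\sum_k c_k^j \boldsymbol{v}_k^j e^{\lambda_k^j t}$.

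First I would derive the eigenvectors. Fix $k$ and set $\boldsymbol{v}=\boldsymbol{v}_k^j$. Because $\boldsymbol{A}^j$ is upper bi-diagonal with (generically) distinct diagonal entries, the eigenvector for $\lambda_k^j$ must satisfy $v(n)=0$ for $n>k$, while $v(k)$ is free and we normalize it to $v(k)=1$. For $n<k$, the eigenvalue equation $(a_{n,n}-\lambda_k^j)v(n)+a_{n,n+1}v(n+1)=0$ yields the two-term recursion
\[
v(n)=\frac{a_{n,n+1}}{\lambda_k^j-a_{n,n}}\,v(n+1).
\]
Substituting $a_{n,n+1}=q_{j-n+1}$ and computing $\lambda_k^j-a_{n,n}$ from~\eqref{eq:one_sided_eigenvalues} gives the ratio $\tfrac{-q_{j-n+1}}{\sum_{i=j-k+1}^{j-n}p_i+q_{j-k+1}-q_{j-n+1}}$; iterating this from $v(k)=1$ down to $v(n)$ and reindexing by $m=j-n+1$ produces the telescoping product for $v_k^j(n)$ stated in the lemma when $k<M$. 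The analogous calculation for $k=M$ uses the different formula $\lambda_M^j=-\sum_{i=1}^M p_i$ for the bottom diagonal entry, which drops the $+q_{j-k+1}$ term in the denominator and gives the $k=M$ branch of the formula.

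Next I would determine the coefficients $c_k^j$ from the initial condition $[\boldsymbol{S}^j](0)=(1,\dots,1)^\top$ in~\eqref{eq:initial_one_sided}. Since $v_k^j(n)=0$ for $n>k$ and $v_k^j(k)=1$, the system $\sum_{k=n}^{M}c_k^j v_k^j(n)=1$ is triangular and is solved by back-substitution starting from $n=M$: this immediately yields $c_M^j=1$, and for $n=M-1,\dots,1$ gives
\[
c_n^j = 1-\sum_{k=n+1}^{M}c_k^j\,v_k^j(n).
\]
Separating out the $k=M$ term (which produces the first product in~\eqref{eq:one_sided_constants}) and substituting the explicit eigenvector formulas for the remaining $k=n+1,\dots,M-1$ reproduces~\eqref{eq:one_sided_constants} verbatim after renaming the index $k\to l$.

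The main subtlety is the genericity hypothesis that the $\lambda_k^j$'s are distinct, which is needed both to ensure one-dimensional eigenspaces and to keep the denominators in the eigenvector and coefficient formulas nonzero. Both sides of~\eqref{eq:systemSolution} are continuous (in fact rational) in the parameters $\{p_i\},\{q_i\}$, so equality on the open dense set of parameters for which the $\lambda_k^j$ are distinct extends by continuity to the degenerate locus. Beyond this, the proof reduces to careful bookkeeping of indices modulo $M$ (in accordance with Remark~\ref{rem:q_{i,j}}), and no further ideas are required.
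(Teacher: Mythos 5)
Your proposal is correct and follows essentially the same route as the paper's proof: read off the eigenvalues from the diagonal of the upper bi-diagonal matrix $\boldsymbol{A}^j$, solve the two-term recursion for the eigenvectors normalized by $v_k^j(k)=1$ with $v_k^j(n)=0$ for $n>k$, and obtain the $c_k^j$ by back-substitution in the triangular system $\sum_k c_k^j \boldsymbol{v}_k^j=(1,\dots,1)^\top$. Your explicit verification of the recursion ratio and the continuity argument for coinciding eigenvalues merely flesh out details the paper leaves implicit (it assumes distinct eigenvalues, stating this explicitly only in the two-sided case).
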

\begin{proof}
	The eigenvalues of $\boldsymbol{A}^j$ are its diagonal elements $\{\lambda_k^j\}_{k=1}^M$. Hence we can solve for their corresponding eigenvectors $\{v_k^j\}_{k=1}^M$. The coefficients $\{c_k^j\}_{k=1}^M$ are determined from~\eqref{eq:systemSolution} and~\eqref{eq:initial_one_sided}, which gives
	$
	\sum_{k=1}^{M}c_k^j\boldsymbol{v}_k^j=
	\begin{pmatrix}
		1 \\ \vdots \\ 1
	\end{pmatrix}.
	$
\end{proof}
Since ${\rm Prob}(X_j(t)=0)=[S_1^j](t)$, the expected fraction of adopters is given by~\eqref{eq:expectedHeteroOneSidedCircle}.




\section{Proof of Lemma~\ref{lem:masterHetero2}.}
\label{app:lem:masterHetero2}
By~\eqref{eq:masterHeterofc}, for $q\leq n\leq M-1$,
\begin{equation*}
	\begin{aligned}
		\frac{d}{dt}[S_{m,n}^j](t)=\frac{d}{dt}[&S^{j-m},\dots,S^{j},\dots,S^{j+n}](t)=\\ &-\left(\left(\sum_{i=j-m}^{j+n}p_{i}\right) +\sum_{\substack{l=1 \\ l \notin \{j-m,\dots,j,\dots,j+n\} }}^{M}\left(\sum_{i=1}^{n} q_{l,i}\right)\right)[S^{j-m},\dots,S^{j},\dots,S^{j+n}](t)\\ &+\sum_{\substack{l=1 \\ l \notin \{j-m,\dots,j,\dots,j+n\} }}^{M}\left(\sum_{i=j-m}^{j+n}q_{l,i}\right)[S^{j-m},\dots,S^{j},\dots,S^{j+n},S^l](t).
	\end{aligned}
\end{equation*}
Since this network is a two-sided circle, $q_{l,i}\neq 0$ only if $|i-l|\Mod M=1$. Since $i\in \{j-m,\dots,j,\dots,j+n\}$ and $l\notin \{j-m,\dots,j,\dots,j+n\}$, the only non-zero $q_{l,i}$ are $q_{j-m-1,j-m}$ and $q_{j+n+1,j+n}$. Equations~\eqref{eq:masterHetero2M} and~\eqref{eq:masterHetero2Initial} follow directly from~\eqref{eq:masterHeterofcM} and~\eqref{eq:masterHeterofcInitial}, respectively.

\section{$[S_{0,0}^j]$ in the two-sided case.}
\label{app:S_{0,0}^j}
Equations~\eqref{eqs:masterHetero2} can be written as the system of $\frac{M(M-1)}{2}+1\; $linear constant-coefficient ODEs
\begin{subequations}
	\label{eqs:two_sided_ivp}
	\begin{equation}
		\label{eq:two_side_circle_odes}
		\dot{[\boldsymbol{S}^j]}=\boldsymbol{A}^j[\boldsymbol{S}^j],
	\end{equation}
	where 
	\begin{small}
		\begin{align*}
			[\boldsymbol{S}^j]&= 
			\begin{pmatrix}
				[S_{0,0}^j](t)\\ [S_{0,1}^j](t)\\ [S_{1,0}^j](t) \\ [S_{0,2}^j](t) \\ [S_{1,1}^j](t) \\ [S_{2,0}^j](t)\\ [S_{0,3}^j](t)\\ \vdots\\ [S_{M-2,0}^j](t)\\ [S_M^j](t)
			\end{pmatrix}, ~~
			\dot{[\boldsymbol{S}^j]}= 
			\begin{pmatrix}
				\dot{[S_{0,0}^j]}(t)\\ \dot{[S_{0,1}^j]}(t)\\ \dot{[S_{1,0}^j]}(t) \\ \dot{[S_{0,2}^j]}(t) \\ \dot{[S_{1,1}^j]}(t) \\\dot{[S_{2,0}^j]}(t)\\ \dot{[S_{0,3}^j]}(t)\\ \vdots\\\dot{[S_{M-2,0}^j]}(t)\\ \dot{[S_M^j]}(t)
			\end{pmatrix},\\
			\boldsymbol{A}^j&=
			\left(
			\begin{matrix}
				a_1 & q_{j+1,j} & q_{j-1,j} & 0&\hdots&\hdots & \hdots & \hdots & 0 \\
				0 & a_2 & 0 & q_{j+2,j+1}&q_{j-1,j}&0 & \ddots & \ddots &\ddots \\
				0&0& a_3 & 0&q_{j+1,j}&q_{j-2,j-1} & 0 & \ddots &\ddots\\
				\vdots   & \ddots        & 0       &\ddots&\hdots&\ddots &\ddots   & \ddots &0\\
				\vdots&\ddots&0&\ddots&a_4&\hdots & q_{j+b+1,j+b} & q_{j-a-1,j-a} &0\\
				\vdots   & \ddots  & \ddots  &\ddots&\ddots  &\ddots   & \ddots & \ddots &a_5\\
				0 &    \ddots     &\ddots & \hdots&\hdots&\hdots    &0        &0      &-\sum_{i=1}^{M}p_i
			\end{matrix}
			\right)
		\end{align*}
	\end{small}
	and 
	\begin{align*}
		a_1&=-p_j-q_{j-1,j}-q_{j+1,j}, \quad 		a_2=-p_{j}-p_{j+1}-q_{j-1,j}-q_{j+2,j+1}, \quad
		a_3=-p_{j-1}-p_{j}-q_{j-2,j-1}-q_{j+1,j},\\
		a_4&=\left(-\sum_{i=j-a}^{j+b}p_i\right) -q_{j-a-1,j-a}-q_{j+b+1,j+b},\quad
		a_5=q_{j-M+1,j-M+2}+q_{j+1,j}.
	\end{align*}
	together with the initial conditions 
	\begin{equation}
		\label{eq:two_sided_initial}
		[\boldsymbol{S}^j]_{|t=0}=
		\begin{pmatrix}
			1 \\ \vdots \\ 1
		\end{pmatrix}.
	\end{equation}
\end{subequations}
The eigenvalues of $A^j$ are its diagonal elements
\begin{equation*}
	\lambda_{a,b}^j = \left(-\sum_{i=j-a}^{j+b}p_i\right) -q_{j-a-1,j-a}-q_{j+b+1,j+b},  \qquad a,b=0,1,\dots,M-2,\qquad a+b\leq M-2,
\end{equation*}
and
$\lambda_M=-\sum_{i=1}^{M}p_i$.

We have not yet found a way to explicitly solve for the eigenvectors of $A^j$ for a general $M$. Under the assumption that all eigenvalues are unique, for $M=2$, the 2 eigenvectors are
\begin{equation*}
	v_{0,0}^j=
	\begin{pmatrix}
		1\\0
	\end{pmatrix}, \qquad
	v_2^j=
	\begin{pmatrix}
		-\frac{q_{j-1,j}}{p_{j-1}-q_{j-1,j}}\\1
	\end{pmatrix}.
\end{equation*}
When $M=3$, the 4 eigenvectors are
\begin{align*}
	v_{0,0}^j=
	\begin{pmatrix}
		1\\0\\0\\0
	\end{pmatrix},&\qquad
	v_{0,1}^j=
	\begin{pmatrix}
		-\frac{q_{j+1,j}}{p_{j+1}+q_{j+2,j+1}-q_{j+1,j}}\\1\\0\\0
	\end{pmatrix}, \qquad
	v_{1,0}^j=
	\begin{pmatrix}
		-\frac{q_{j-1,j}}{p_{j-1}+q_{j-2,j-1}-q_{j-1,j}}\\0\\1\\0
	\end{pmatrix},\\
	&v_3^j=
	\begin{pmatrix}
		\frac{q_{j+1,j}\left(\frac{q_{j-1,j}+q_{j+2,j+1}}{p_{j-1}-q_{j-1,j}-q_{j+2,j+1}}\right)+q_{j-1,j}\left(\frac{q_{j-2,j-1}+q_{j+1,j}}{p_{j+1}-q_{j-2,j-1}-q_{j+1,j}}\right)}{p_{j+1}+p_{j-1}-q_{j-1,j}-q_{j+1,j}}\\-\frac{q_{j-1,j}+q_{j+2,j+1}}{p_{j-1}-q_{j-1,j}-q_{j+2,j+1}}\\-\frac{q_{j-2,j-1}+q_{j+1,j}}{p_{j+1}-q_{j-2,j-1}-q_{j+1,j}}\\1
	\end{pmatrix}.
\end{align*}
After solving for the eigenvectors of $A^j$, we get that
\begin{equation}
	\label{eq:sol_two_sided}
	[\boldsymbol{S}^j]=\sum_{\substack{a,b \in \left\lbrace 0,\dots,M-2\right\rbrace \\ a+b \leq M-1} }c_{a,b}^j\boldsymbol{v}_{a,b}^je^{\lambda_{a,b}^jt},
\end{equation}
where $(c_{a,b},\boldsymbol{v}_{a,b},\lambda_{a,b})=(c_M,\boldsymbol{v}_M,\lambda_M)$ when $a+b=M-1$, and the coefficients $c_{a,b}^j$ are determined by~\eqref{eq:sol_two_sided} and~\eqref{eq:two_sided_initial}:
\begin{equation*}
	\sum_{\substack{a,b \in \left\lbrace 0,\dots,M-2\right\rbrace \\ a+b \leq M-1}}c_{a,b}^j\boldsymbol{v}_{a,b}^j=
	\begin{pmatrix}
		1 \\ \vdots \\ 1
	\end{pmatrix}.
\end{equation*}

\section{Proof of eq.~\eqref{eq:der_dimension_initial}.}
\label{app:lem:dimension_initial}


The equations for $f_D'(0)$ and $f_D''(0)$ follow from~\eqref{eq:n''(0)_p_hom} by letting $M\to\infty$. In order to derive $f_D'''(0)$, we first note that since each node is connected to its $2D$ neighbors, then  by~\eqref{eq:masterHeterofc} and~\eqref{eq:cart_hom},
\begin{equation*}
	\frac{d}{dt}[S^{{\bf i}}](t)=-\left(p+\sum_{j=1}^{2D}\frac{q}{2D}\right)[S^{{\bf i}}](t)+\sum_{j=1}^{D}\frac{q}{2D}[S^{\bf i}, S^{{\bf i}+{\bf e}_j}](t)+\sum_{j=1}^{D}\frac{q}{2D}[S^{\bf i}, S^{{\bf i}-{\bf e}_j}](t),
\end{equation*}
where ${\bf e}_j(i)$ is the unit vector in the $j$th coordinate.
By translational invariance, $[S^{\bf i}]$ and $[S^{\bf i}, S^{{\bf i}\pm {\bf e}_j}]$ are independent of ${\bf i}$, ${\bf e}_j$, and $\pm$, and so
\begin{equation}
	\label{eq:masterHeterodD}
	\frac{d}{dt}[S^{{\bf i}}](t)=-\left(p+q\right)[S^{{\bf i}}](t)+q[S_2](t).
\end{equation} 
{\rev Let $(S_3^-)(t)$ denote the event that any configuration in which 3 adjacent non-adopters are colinear appears at time $t$, e.g. $(S^{{\bf i}},S^{{\bf i}+{\bf e}_j},S^{{\bf i}+2{\bf e}_j})(t)$, and let $(S_3^L)(t)$ denote the event that any configuration in which 3 adjacent non-adopters form an L-shape appears at time $t$, e.g. $(S^{{\bf i}},S^{{\bf i}+{\bf e}_j},S^{{\bf i}+{\bf e}_j+{\bf e}_k})(t)$, where $j\neq k$. Let $[S_3^-](t)$ and $[S_3^L](t)$ denote the probabilities of these events, respectively.} By translational invariance, $[S_3^-](t)$ is the same for any such configuration, and similarly for $[S_3^L](t)$. Therefore, by~\eqref{eq:masterHeterofc} and~\eqref{eq:cart_hom},
\begin{equation}
	\label{eq:masterHeterodD2}
	\frac{d}{dt}[S_2](t)=-\left(2p+2\left(\frac{q}{2D}\right)+\left(4D-4\right)\left(\frac{q}{2D}\right)\right)[S_2](t)+2\left(\frac{q}{2D}\right)[S_3^-](t)+\left(4D-4\right)\left(\frac{q}{2D}\right)[S_3^L](t),
\end{equation}
since each node in the configuration $(S_2)$ is potentially influenced by $2D-1$ nodes, $2D-2$ of which form an L-shape when added to $(S_2)$, and 1 of which forms a line.
Differentiating~\eqref{eq:masterHeterodD} and using~\eqref{eq:masterHeterofcInitial} and~\eqref{eq:s'_0} gives
\begin{subequations}
	\label{eqs:cart_initial}
	\begin{equation}
		\frac{d^2}{dt^2}[S](0)=-(p+q)[S]'(0)+q[S_2]'(0)=p(p-q).
	\end{equation}
	Furthermore, by~\eqref{eq:masterHeterodD2},
	\begin{equation}
		\frac{d^2}{dt^2}[S_2](0)=\left(2p+\left(4D-2\right)\left(\frac{q}{2D}\right)\right)(2p)-\left(4D-2\right)\left(\frac{q}{2D}\right)\left(3p\right)=p\left(4p-\frac{2D-1}{D}q\right).
	\end{equation}
\end{subequations}
Therefore, differentiating~\eqref{eq:masterHeterodD} twice and substituting in the right hand sides of equations~\eqref{eqs:cart_initial} gives
\begin{equation*}
	\frac{d^3}{dt^3}[S_1](0)=-\left(p+2D\left(\frac{q}{2D}\right)\right)p(p-q)+\left(2D\right)\left(\frac{q}{2D}\right)p\left(4p-\frac{2D-1}{D}q\right)=-p\left(p^2-4pq+\frac{D-1}{D}q^2\right),
\end{equation*}
and so we get the desired result.
%

\section{Proof of Lemma~\ref{lem:order}.}
\label{app:lem:order}
For any finite $t$, nodes whose distance from the interface between $p_1$ and $p_2$ is $\gg qt$ only ``see" a homogeneous environment. Therefore, as $M\to \infty$, the interaction between nodes with different $p_i$ values becomes negligible, and so
\begin{equation*}
	f_A^{\rm het}(t)\sim \frac{f_{\rm line}^{\rm 1-sided}(t;p_1,q,\frac{M}{2})+f_{\rm line}^{\rm 1-sided}(t;p_2,q,\frac{M}{2})}{2}.
\end{equation*}
Since $\lim\limits_{M\to \infty}f_{\rm line}^{\rm 1-sided}(t;p,q,M)=\lim\limits_{M\to \infty}f_{\rm circle}^{\rm 1-sided}(t;p,q,M)$, see~\cite{Bass-boundary-18}, and $\lim\limits_{M\to \infty}f_{\rm circle}^{\rm 1-sided}(t;p,q,M)=f_{\rm 1D}(t;p,q)$, see~\eqref{eq:f_1D}, result~\eqref{eq:f_A} for $f_A$ follows.
$\\\\$
\noindent In circle $B$, as $M\to \infty$, by translation symmetry, $[S_k^i](t)\equiv [S_k^{i+2}](t)$ for any $i$ and $k$. Therefore,
\begin{equation*}
	f_B^{\rm het}(t)=1-\frac{[S_1^1](t)+[S_1^2](t)}{2}.
\end{equation*}
Let $p:=\frac{p_1+p_2}{2}$. By~\eqref{eq:masterHetero},
\begin{equation*}
	\dot{[S_k^1]}(t)=
	\begin{cases}
		-((k-1)p+p_1+q)[S_k^1](t)+q[S_{k+1}^1](t),&\qquad $k$ \ {\rm odd,}\\
		-(kp+q)[S_k^1](t)+q[S_{k+1}^1](t),&\qquad $k$ \ {\rm even,}
	\end{cases}
\end{equation*}
where $[S_k^1](0)=1$ for all $k$. The substitution
\begin{equation*}
	[S_k^1](t)=
	\begin{cases}
		e^{-(k-1)pt}T_1(t),&\qquad $k$ \ {\rm odd,}\\
		e^{-kpt}R_1(t),&\qquad $k$ \ {\rm even,}
	\end{cases}
\end{equation*}
reduces this infinite system of ODEs to the two coupled ODEs
\begin{align*}
	\dot{R_1}(t)&=-qR_1(t)+qT_1(t),\qquad \qquad \;\;\;\;\qquad R_1(0)=1,\\
	\dot{T_1}(t)&=-(p_1+q)T_1(t)+qe^{-2pt}R_1(t),\qquad T_1(0)=1.
\end{align*} 
Furthermore, the substitutions $R(t)=e^{-qt}U_1(t)$ and $T_1(t)=e^{-(p_1+q)t}V_1(t)$
yield system~\eqref{eq:v1_system} for $U_1(t)$ and $V_1(t)$, and so
\begin{equation*}
	[S_1^1](t)=T_1(t)=e^{-(p_1+q)t}V_1(t)=\frac{1}{q}e^{-qt}\dot{U_1}(t).
\end{equation*}
Repeating this procedure for the infinite system $\{[S^2_k](t)\}_k$ yields
$	[S_1^2](t)=\frac{1}{q}e^{-qt}\dot{U_2}(t)$,
where $U_2(t)$ is the solution of 
$$
\dot{U_2}(t)=qe^{-p_2t}V_2(t), \quad \dot{V_2}(t) =qe^{-p_1t}U_2(t) \qquad U_2(0)= V_2(0)=1.
$$
Comparing this system with~\eqref{eq:v1_system} shows that $V_1(t)\equiv U_2(t)$, and so
$
[S_1^2](t)=\frac{1}{q}e^{-qt}\dot{V_1}(t).
$
Hence,
\begin{equation*}
	f_B(t)=1-\frac{1}{2}\left([S_1^1](t)+[S_1^2](t)\right)=1-\frac{1}{2q}e^{-qt}(\dot{V_1}(t)+\dot{U_1}(t)).
\end{equation*}

\section*{Acknowledgments}
\label{sec:acknowledgments}
We thank Eilon Solan for help with the proof of Theorem~\ref{thm:CDF-dominance}, and Tomer Levin for useful discussions.

\bibliographystyle{plain}
\bibliography{Heterogeneity_MOR_Final_arXiv}

\end{document}